\documentclass[10pt]{article}

\usepackage[english]{babel}
\usepackage{ae}
\usepackage{graphicx}
\usepackage{tikz}
\usetikzlibrary{arrows}
\usetikzlibrary{matrix}
\usepackage{amsmath, amsfonts, amssymb, mathtools, amsthm}
\usepackage{url}
\usepackage{authblk}
\usepackage{nicefrac}

\newtheorem{lemma}{Lemma}
\newtheorem{example}{Example}
\newtheorem{remark}{Remark}
\newtheorem{proposition}{Proposition}

\providecommand{\abs}[1]{\lvert#1\rvert}

\providecommand{\norm}[1]{\lVert#1\rVert}

\newcommand{\num}[1]{\mathbb{#1}}
\newcommand{\Reals}{\num{R}}
\newcommand{\cali}[1]{\mathcal{#1}}

\def\vec#1{\mathbf{#1}}

\begin{document}

\title{Optical Flow on Evolving Surfaces with Space and Time Regularisation}

\author[1]{Clemens Kirisits}
\author[1]{Lukas F. Lang}
\author[1,2]{Otmar Scherzer}
\affil[1]{\footnotesize Computational Science Center, University of Vienna, Oskar-Morgenstern-Platz\ 1, 1090 Vienna, Austria}
\affil[2]{Radon Institute of Computational and Applied Mathematics, Austrian Academy of Sciences, Altenberger Str.\ 69, 4040 Linz, Austria}

\maketitle

\begin{abstract}
\noindent
We extend the concept of optical flow with spatiotemporal regularisation to a dynamic non-Euclidean setting. Optical flow is traditionally computed from a sequence of flat images. The purpose of this paper is to introduce variational motion estimation for images that are defined on an evolving surface. Volumetric microscopy images depicting a live zebrafish embryo serve as both biological motivation and test data.
\end{abstract}

\noindent
\textbf{Keywords: }biomedical imaging, Computer Vision, evolving surfaces, optical flow, spatiotemporal regularisation, variational methods.

\section{Introduction}
\subsection{Motivation}\label{sec:motivation}
Advances in laser-scanning microscopy and fluorescent protein technology have increased resolution of microscopy imaging up to a single cell level~\cite{MegFra03}. 
They allow for four-dimensional (volumetric time-lapse) imaging of living organisms and shed light on cellular processes during early embryonic development. 
Understanding cellular processes often requires estimation and analysis of cell motion. However, the amount of data that is recorded is tremendous and therefore in many cases automated image analysis is necessary.

The specific biological motivation for this work is to understand the motion and division behaviour of fluorescently labelled endodermal cells of a zebrafish embryo. Although of considerable importance for developmental biology, knowledge about the migration patterns of these cells is scarce \cite{SchmShaScheWebThi13}. The dataset under consideration consists of volumetric time-lapse images taken by a laser-scanning microscope. The recorded sequence depicts a cuboid section $S \subset \mathbb{R}^3$ of said zebrafish embryo, whose endodermal cells express a fluorescent protein. We model this sequence by a scalar function
\begin{equation*}
	\bar F : [0,T] \times S \to \mathbb{R}
\end{equation*}
that assigns to every pair $(t,x) \in [0,T] \times S$ a nonnegative value $\bar F (t,x)$ proportional to the fluorescence response of point $x$ at time $t$.

Optical flow methods are used regularly to estimate cellular motion, see Sec.~\ref{sec:related}. Applying them directly to our data $\bar F$ to obtain a dense 3D velocity field
\begin{equation*}
	\vec{m} : [0,T] \times S \to \mathbb{R}^3
\end{equation*}
is possible but problematic from a computational point of view \cite{AmaMyeKel13}, even more so if temporal regularisation is to be included. We propose a solution to this by adapting our model according to biological facts about the nature of the marked cells.

Endodermal cells develop on the surface of the embryo's yolk, where they form a non-contiguous monolayer \cite{WarNus99}. Loosely speaking, they only sit next to each other but not on top of each other. Moreover, the yolk's shape is roughly spherical and deforms over time. This means that the yolk's surface can be modelled by an embedded two-dimensional manifold $\mathcal{M}_t\subset\mathbb{R}^3$, the subscript indicating dependence on time. In practice, $\mathcal{M}_t$ can be approximated by fitting piecewise polynomials, for instance, to the cell centres.\footnote{Sometimes it is possible to already capture the yolk's surface with the microscope in a second sequence of images. We do not, however, use such additional data in this article.} Consequently it is possible to reduce the data dimension by only considering the restriction $F$ of $\bar F$ to this moving surface; see Fig.~\ref{fig:embryo}. More details on the acquisition and preprocessing of the microscopy data are given in Sec.~\ref{sec:parametrisation}. This dimension reduction, in turn, necessitates the development of an optical flow model for data defined on an evolving surface, which is the main contribution of this article.

Let $t_0$ be a fixed instant of time and $x_0 \in \mathcal{M}_{t_0}$. Assume a cell located at $x_0$, indicated by a relatively high value of $F(t_0,x_0)$, moves with velocity $\vec{m}(t_0,x_0)$. On the other hand, suppose the yolk's surface has velocity $\vec{V}(t_0,x_0)$. The purely tangential vector
\begin{equation}\label{eq:decomp}
	\vec{u}(t_0,x_0) = \vec{m}(t_0,x_0) - \vec{V}(t_0,x_0)
\end{equation}
describes the cell's velocity relative to $\vec{V}$. Put differently, the total observed velocity $\vec{m}$ of a cell is the sum of the surface velocity $\vec{V}$ and the cell's tangential velocity $\vec{u}$. Compare Fig.~\ref{fig:sketch}. While the former is a quantity extrinsic to the surface the latter is intrinsic. A motion estimation method dealing with the full 4D dataset $\bar F$ would directly try to calculate $\vec{m}$ for all $(t,x) \in [0,T] \times S$. The method proposed in this article, however, only computes the tangential field $\vec{u}$ for a given surface velocity $\vec{V}$. The total velocity can then be recovered by adding the two vector fields.

\begin{figure}
	\begin{center}
	\begin{tikzpicture}
		\draw [thick, gray] (-3,0) to [out=30,in=150] (3,0);
		\draw [thick, gray] (-3,1) to [out=40,in=150] (4,1.8);
		\node [right] at (3,0) {$\mathcal{M}_{t_0}$};
		\node [below] at (4,1.5) {$\mathcal{M}_{t_0+\Delta t}$};
		\draw [fill=black] (0,.9) ellipse [x radius=0.3, y radius=0.15];
		\draw [fill=black] (2.5,2.4) ellipse [x radius=0.3, y radius=0.15];
		\draw [-stealth', thick] (0,.9) -- (.93*1.1,.93*2.6);
		\draw [-stealth', thick] (0,.9) -- (1.5,.9);
		\draw [-stealth', thick] (0,.9) -- (.94*2.5,.94*2.4);				
		\node [left] at (.55,1.75) {$\vec{V}$};
		\node [right] at (1.5,.9) {$\vec{u}$};
		\node [right] at (1.7,1.7) {$\vec{m}$};
		\draw [-stealth', dotted, thick] (0,0) to [out=90,in=225] (0,.9) to [out=45,in=270] (1.3,1.7) to [out=90,in=225] (2.5,2.4) to [out=45,in=225] (1.25*2.5,1.25*2.4);
		\node [right] at (1.3*2.5,1.3*2.4) {$\gamma$};
	\end{tikzpicture}
	\end{center}
	\caption{Sketch of a cell (indicated by a black ellipse) moving along a trajectory $\gamma$ on a moving surface. The cell's velocity is given by $\partial_t \gamma = \vec{m}$, which can be decomposed into surface velocity $\vec{V}$ and relative tangential motion $\vec{u}$.}
	\label{fig:sketch}
\end{figure}
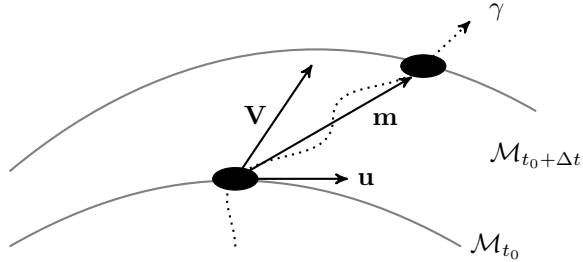

In practice the true velocity of a moving surface might not be known and might even be impossible to determine from available data. This is also the case for the microscopy data considered in this paper. Our solution consists in picking one surface velocity $\vec{V}$ that is consistent with $\mathcal{M}_t$, of which there are infinitely many in general, and to estimate the tangent field $\vec{u}$ relative to this chosen surface velocity. While the resulting $\vec{u}$ must be interpreted with care, it is reasonable to assume that the sum $\vec{u} + \vec{V}$ is close to the true total velocity $\vec{m}$. The selected surface velocity ideally strikes a balance between being easy to implement while being not too unnatural. While modelling the optical flow on an evolving surface is the main novelty of this article, from the viewpoint of our particular application, it can be regarded as a subproblem making the computation of 3D velocities feasible, namely by reducing the data dimension while keeping as much accuracy as possible.

\begin{figure*}
	\centering
	\includegraphics[width=0.32\textwidth]{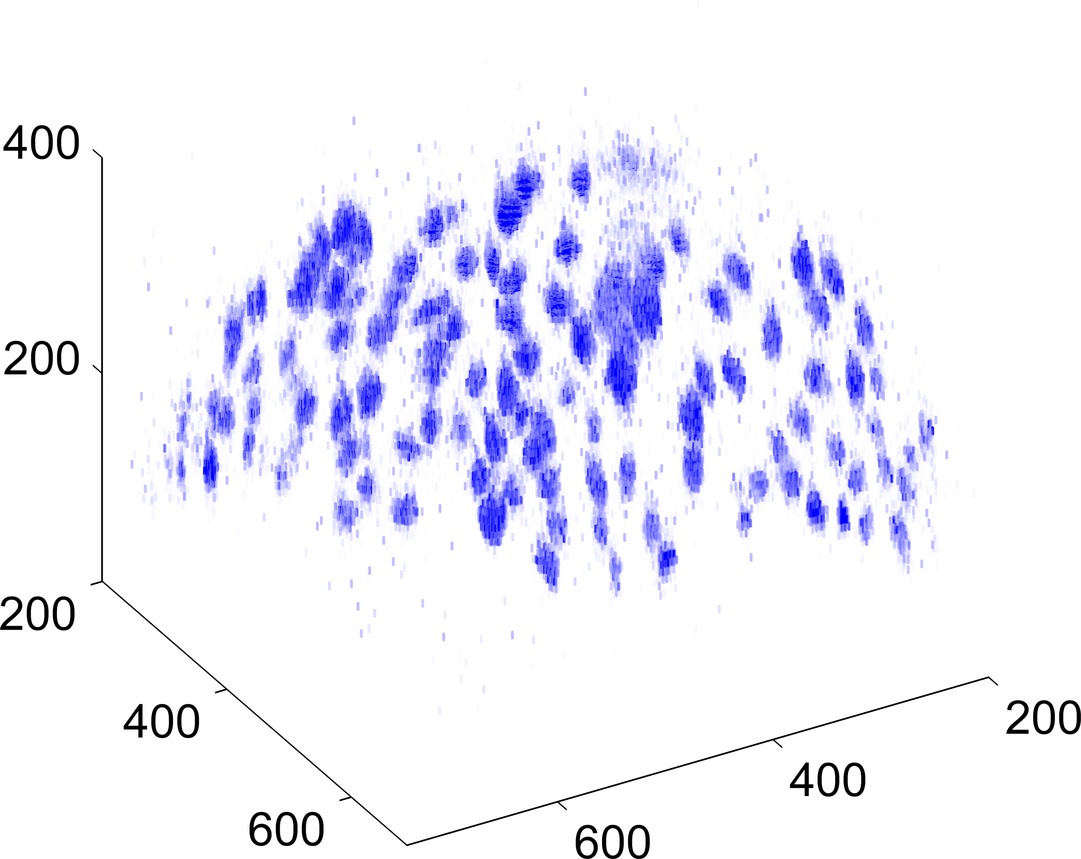}
	\hfill
	\includegraphics[width=0.32\textwidth]{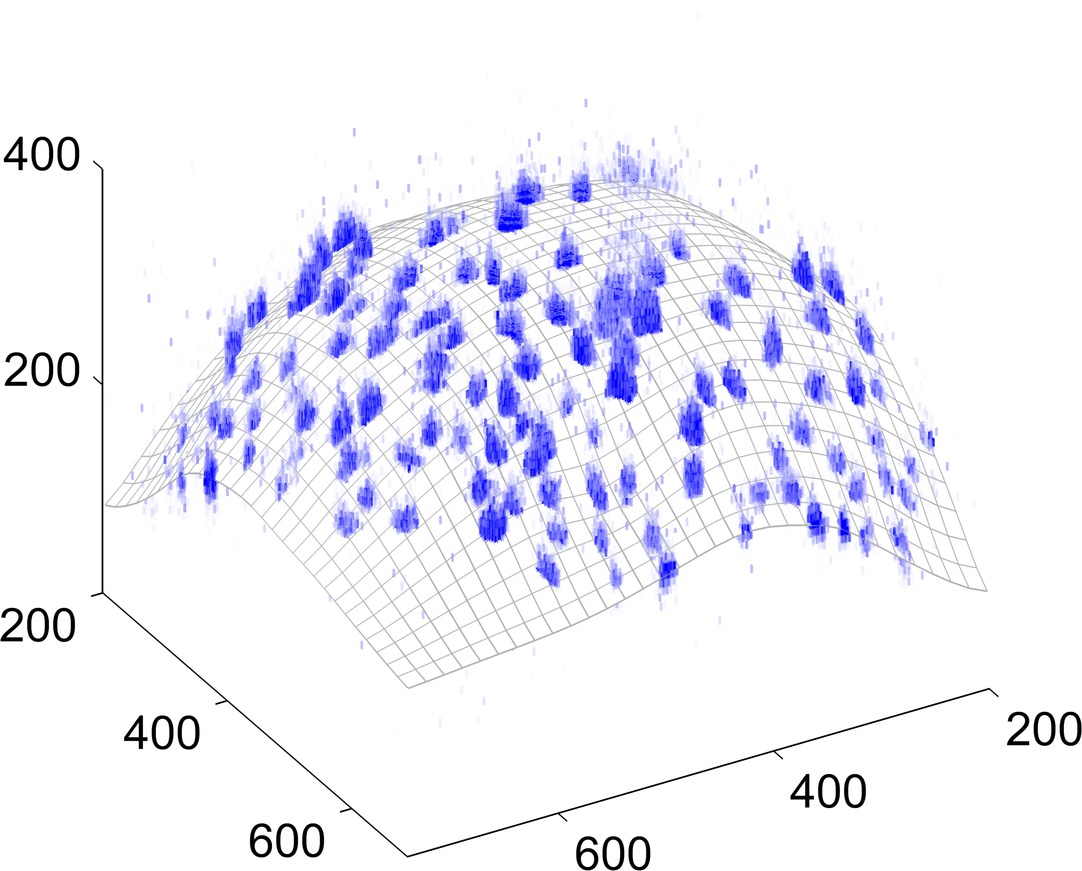}
	\hfill
	\includegraphics[width=0.32\textwidth]{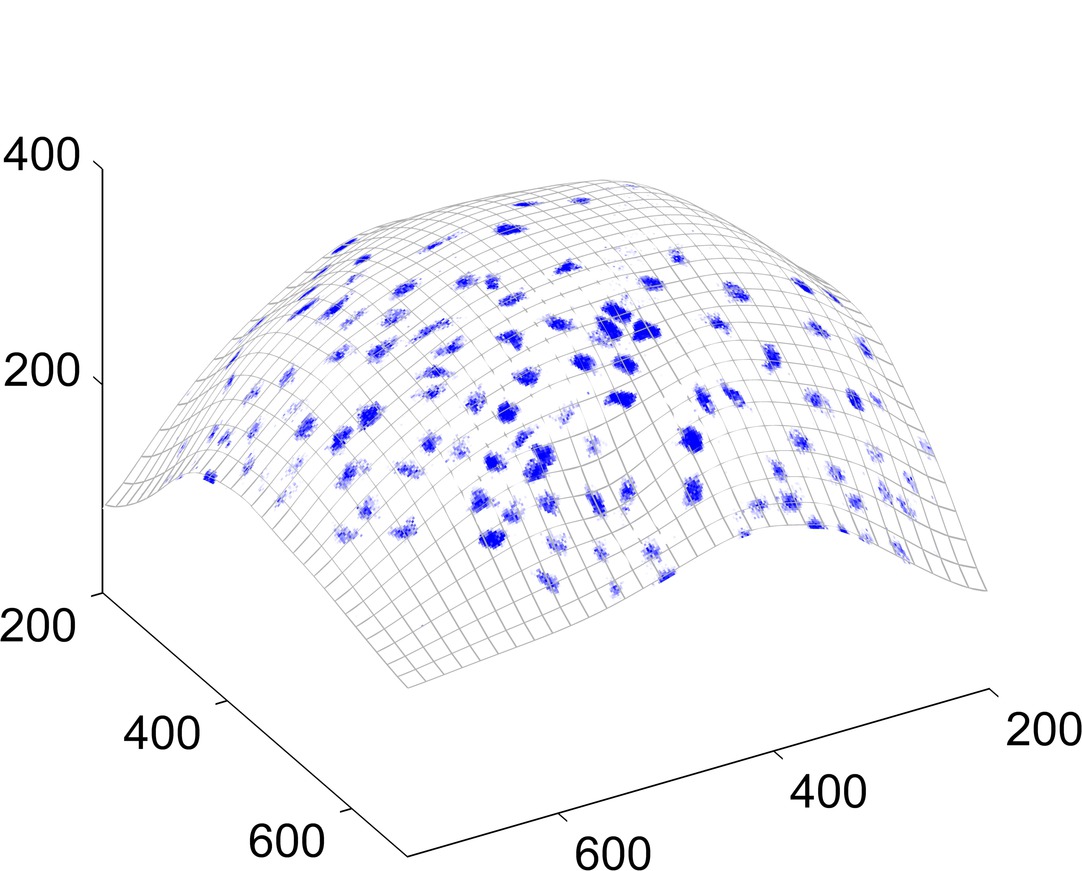}
	\\
	\includegraphics[width=0.32\textwidth]{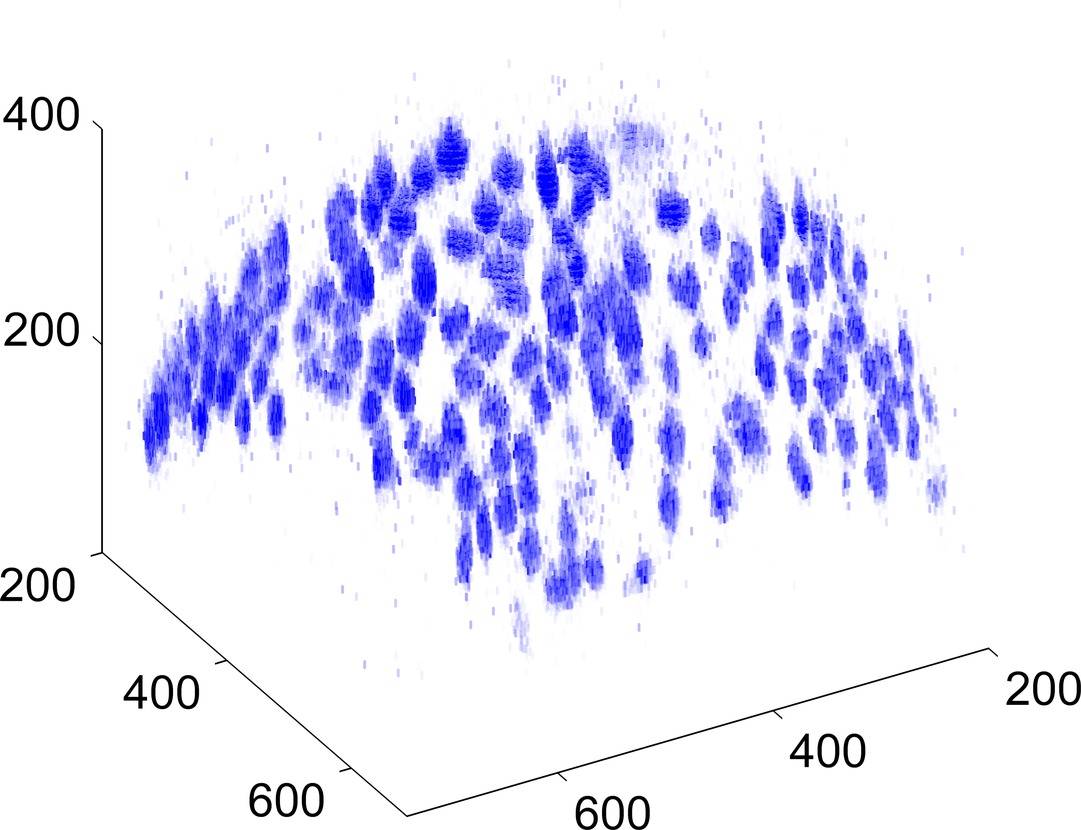}
	\hfill
	\includegraphics[width=0.32\textwidth]{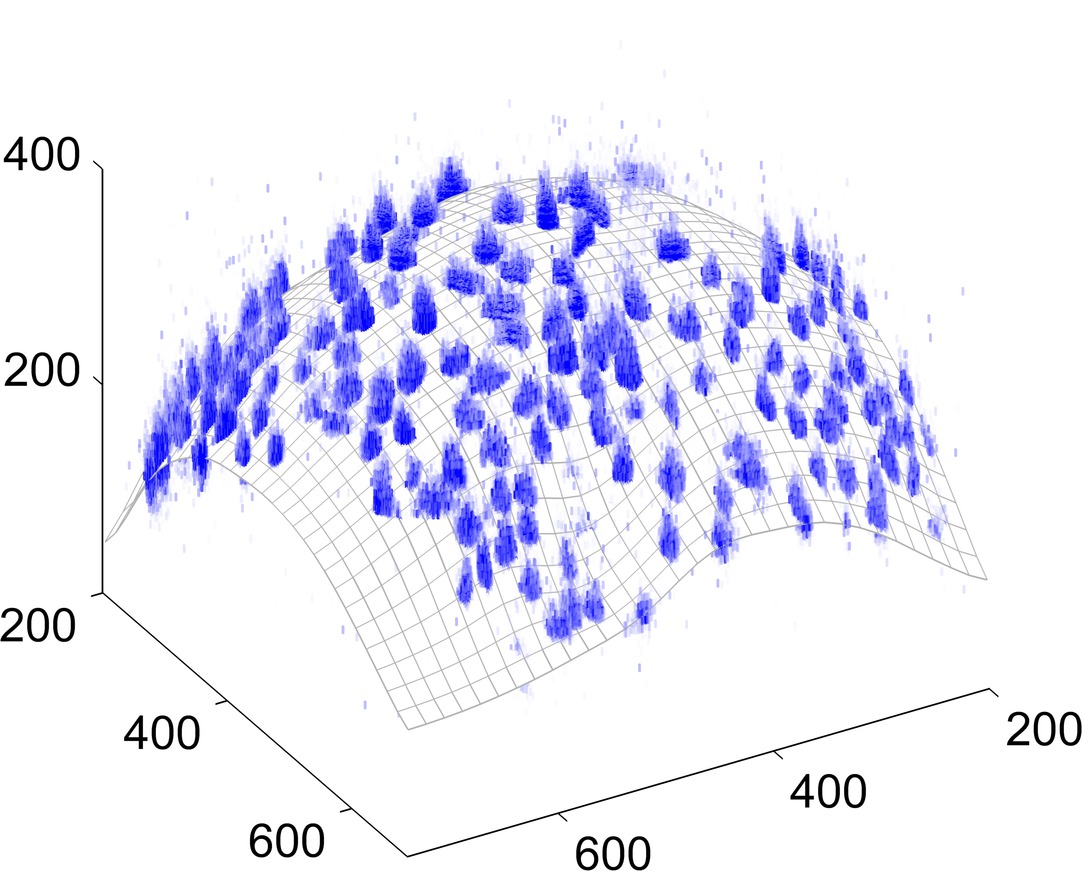}
	\hfill
	\includegraphics[width=0.32\textwidth]{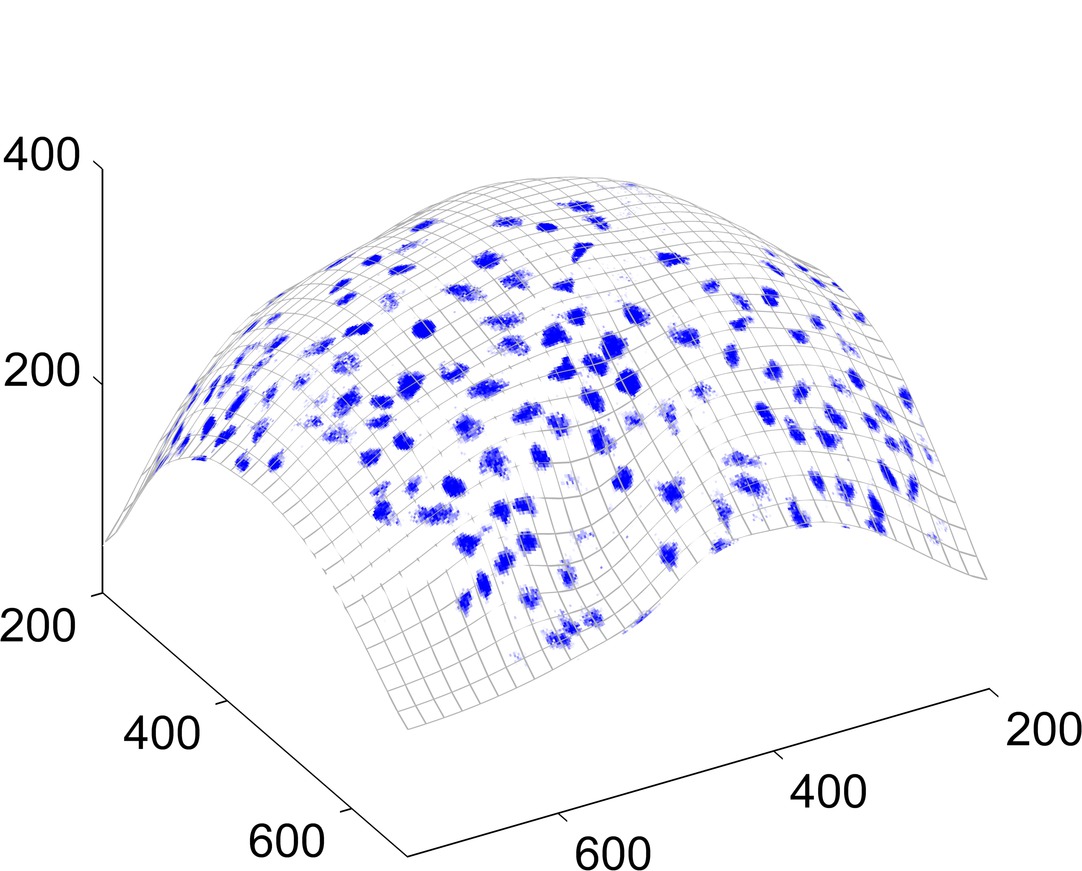}
	\caption{Frame no.\ 50 (top row) and 61 (bottom row) of the embryonic zebrafish image sequence. The left images illustrate the raw volumetric data $\bar F$. Intensity corresponds to fluorescence response. In the middle images, the curved mesh represents surfaces fitted to the cell's centres. The right images depict only the surface and the extracted two-dimensional image $F$. All dimensions are in micrometer ($\mu$m). For more details on the microscopy data and the preprocessing steps see Sec.~\ref{sec:exp}.}
	\label{fig:embryo}
\end{figure*}

\subsection{Contribution}

The contributions of this article are as follows. First, we formulate the optical flow problem on an evolving two-dimensional manifold and derive a generalised optical flow constraint. Second, we translate the classical functional by Horn and Schunck~\cite{HorSchu81} and its spatiotemporal extension by Weickert and Schn\"{o}rr~\cite{WeiSchn01b} to the setting of moving manifolds. The associated Euler-Lagrange equations are solved with a finite difference scheme requiring a global parametrisation of the moving manifold. Finally, we apply this technique to obtain qualitative results from the aforementioned zebrafish data. Our experiments show that the optical flow is an appropriate tool for analysing these data. It is capable of visualising global trends as well as individual cell movements. In particular, the computed flow field can indicate cell divisions, while its integral curves approximate cell trajectories.

Finally, we address a point raised in the recent publication by Schmid et al.~\cite{SchmShaScheWebThi13}, who also analysed endodermal cell dynamics in a zebrafish embryo. They approximated the surface by a sphere, used different map projections to reduce the amount of data by one dimension, and subsequently computed cell motion in the plane. They acknowledge, however, the need for more exact, and supposedly slower, imaging techniques that do not discard any 3D information. While our approach still requires the volume data to be projected onto a surface and thus is faster than comparable 3D approaches, it does not require the surface to be very simple --- e.g.~spherical or planar --- or static.

This article is structured as follows. In the next subsection we review related literature. Section \ref{sec:preliminaries} is devoted to providing the necessary mathematical background, notations, and definitions. Sections \ref{sec:model} and \ref{sec:eulerlagrange} introduce our variational model of optical flow on evolving surfaces and contain the continuous and discretised optimality conditions, respectively. In Sec.~\ref{sec:exp} we explain our microscopy data and the necessary preprocessing steps, summarise our approach, and finally present numerical results.

\subsection{Related work}\label{sec:related}
Optical flow is the apparent motion in a sequence of images. Its estimation is a key problem in Computer Vision. Horn and Schunck~\cite{HorSchu81} were the first to propose a variational approach assuming constant brightness of moving points and spatial smoothness of the velocity field. Since then, a vast number of modifications have been developed. See \cite{BakSchaLewRotBla11,WeiBruBroPap06} for recent surveys.

Using optical flow to extract motion information from cell biological data has gained popularity over the last decade. See, for example \cite{AbrVie02,AmaMyeKel13,BuiYuNizSil10,DelJarScheRamRui12,HubUlmMat07,MelCamLomRizVer07,Miu05,QueMenCam10,SchmShaScheWebThi13}. In  these works displacement fields are computed either from 3D images or from 2D projections of the 3D data. While projections can suffer from inaccuracies \cite{QueMenCam10,SchmShaScheWebThi13}, the extraction of dense velocities from volumetric time-lapse data poses computational challenges \cite{AmaMyeKel13}. In the present article we avoid both of these problems.

Many natural scenarios are more accurately described by a velocity field on a non-flat surface rather than on a flat domain. With applications to robot vision, Imiya~et~al.~\cite{ImiSugTorMoc05,TorImiSugMoc05} considered optical flow for spherical images. Lef\`{e}vre and Baillet~\cite{LefBai08} extended the Horn-Schunck method to general 2-Riemannian manifolds, showed well-posedness, and applied it to brain imaging data. They solved the numerical problem with finite elements on a surface triangulation. In all of the above works the underlying imaging surface is fixed over time, while in this paper it is not.

A preliminary version of this paper appeared in \cite{KirLanSch13}. The main differences to the present article are as follows. First, our current implementation allows us to regularise spatiotemporally as well as only spatially. In \cite{KirLanSch13} we only treated spatial regularisation. Second, the spatial regularisation functional has been improved in the sense that it is now parametrisation invariant. We have also conducted new experiments with the cell microscopy data and, in contrast to \cite{KirLanSch13}, computed approximate cell trajectories. Finally, we added some recent references.
\section{Notation and Background} \label{sec:preliminaries}
Whenever convenient we make use of the Einstein summation convention. Every index that appears exactly twice in an expression, once as a sub- and once as a superscript, is summed over.

\subsection{Evolving Surfaces} \label{sec:evolsurf} Let $\cali{M} = \left( \cali{M}_t \right)_{t\in I}$ be a family of compact smooth 2-manifolds $\cali{M}_t \subset \Reals^3$ indexed by a time interval $I=[0,T]$. Each $\mathcal{M}_t$ is assumed to be oriented by the unit normal field $\vec{N}(t,\cdot)$. For every $t\in I$ and $x\in\mathcal{M}_t$ the orthogonal projector onto the tangent plane $T_x\mathcal{M}_t$ is given by
\begin{equation}\label{eq:proj}
	\mathrm{P}(t,x) \coloneqq \mathrm{Id} - \vec{N}(t,x) \vec{N}(t,x)^\top.
\end{equation}
We call $\cali{M}$ an \emph{evolving surface}, if there is a smooth function
\begin{equation*}
	\phi : I \times \cali{M}_0 \to \Reals^3
\end{equation*}
such that $\phi(t,\cdot)$ is a diffeomorphism between $\cali{M}_0$ and $\cali{M}_t$ for every $t$, and $\phi(0,\cdot)$ is the identity on $\mathcal{M}_0$. Note that $\phi$ cannot be unique in general. With every $\phi$ there is associated a surface velocity. Denote the inverse of $\phi(t,\cdot)$ by $\phi_t^{-1}(\cdot)$. The surface velocity at a point $x \in \mathcal{M}_t$ is then defined by
\begin{equation}\label{eq:velocity}
	\vec{V}(t,x) \coloneqq \partial_t \phi \left(t,\phi_t^{-1}(x)\right).
\end{equation}
In contrast to $\phi$ the domain of $\vec{V}$ is not $I \times \cali{M}_0$, but rather the 3-manifold
\begin{equation*}
	\bar{\mathcal{M}} \coloneqq \bigcup_{t\in I} \left( \{t\} \times \mathcal{M}_t \right) \subset \Reals^4.
\end{equation*}
In other words, $\vec{V}$ is a Eulerian specification of $\mathcal{M}$, while $\phi$ is a Lagrangian one. Even though different functions $\phi,\phi'$ give rise to different velocities $\vec{V},\vec{V}'$, the normal velocity of $\mathcal{M}$ is independent of the choice of $\phi$. That is, $\vec{V} \cdot \vec{N} = \vec{V}'\cdot \vec{N}$. We provide a short proof of this statement in Proposition \ref{thm:normvelo} in the Appendix. Given a Eulerian specification $\vec{V}$ of $\mathcal{M}$, we can obtain, at least locally, a Lagrangian one by solving the ordinary differential equation \eqref{eq:velocity} for $\phi$ with initial condition $\phi(0,x_0) = x_0$. From now on we consider $\phi$ and $\vec{V}$ fixed. See Sec.~\ref{sec:parametrisation} for the specific $\phi$ and $\vec{V}$ we use in the numerical computations.

Let $\vec{x}_0 : \Omega \subset \Reals^2 \to \Reals^3$ be a parametrisation of $\mathcal{M}_0$ mapping local coordinates $\xi = \left( \xi^1, \xi^2 \right)$ to points $x = \left( x^1, x^2, x^3 \right)$ of Euclidean space. By composing $\phi$ and $\vec{x}_0$ we obtain a parametrisation of the evolving surface $\mathcal{M}$
\begin{equation}\label{eq:parametrisation}
	\vec{x}: I \times \Omega \to \Reals^3,\quad \vec{x}(t,\xi) = \phi \left(t, \vec{x}_0(\xi) \right).
\end{equation}
With this convention we always have $\partial_t \vec{x} = \vec{V}$. Differentiation with respect to $\xi^i$ will be denoted by $\partial_i$. The set $\{ \partial_1 \vec{x} (t,\xi), \partial_2 \vec{x}(t,\xi) \}$ forms a basis of $T_{\vec{x} (t,\xi)}\mathcal{M}_t$. Note that this basis is not orthonormal in general. Using dot notation for the standard inner product of $\mathbb{R}^3$, the components of the first fundamental form $g = \left(g_{ij}\right)$ are given by
\begin{equation}\label{eq:metric}
	g_{ij} = \partial_i \vec{x} \cdot \partial_j \vec{x}.
\end{equation}
The elements of its inverse are denoted by upper indices $g^{-1} = \left( g^{ij} \right)$.

Let $F: \bar{\mathcal{M}} \to \Reals$ be a scalar function and $f:I\times\Omega \to \mathbb{R}$ its coordinate representation,\footnote{Distinguishing between a surface quantity and its coordinate representation is often avoided. We decided, however, to make this distinction for the data $F$, and only for $F$, as we found it helpful especially in Sec.~\ref{sec:model}.} that is
\begin{equation*}
	F(t,\vec{x}(t,\xi)) = f(t,\xi).
\end{equation*}
The integral of $F$ over the evolving surface is then given by
\begin{equation*}
	\int_I \int_{\mathcal{M}_t} F \, \mathrm{d}A \, \mathrm{d}t \coloneqq \int_I \int_{\Omega} f \sqrt{\det g} \, \mathrm{d}\xi \, \mathrm{d}t,
\end{equation*}
where $\mathrm{d}A$ denotes the surface measure.

We refer to \cite{CerFriGur05}, \cite{DziEll13} and the references therein for more information on evolving surfaces. Eulerian and Lagrangian coordinates can be read up in Sec.~2.1 of \cite{Bat99}, for example.

\subsection{Derivatives on Evolving Surfaces}

\paragraph{Spatial Derivatives.} The spatial differential operators introduced below are not different from those on static manifolds. Therefore $t \in I$ can be considered fixed in this paragraph.

The surface gradient $\nabla_{\mathcal{M}} F$ of $F$ is the tangent vector field which points in the direction of greatest increase of $F$. In local coordinates it is given by
\begin{equation} \label{eq:grad}
	\nabla_{\mathcal{M}} F = g^{ij} \partial_i f \partial_j \vec{x},
\end{equation}
where we omitted the arguments $(t, \vec{x}(t,\xi))$ on the left and $(t,\xi)$ on the right hand side, respectively. The surface gradient is just the tangential part of the $\mathbb{R}^3$ gradient. More precisely, if $\hat F$ is a smooth extension of $F$ to an open neighbourhood of $\mathcal{M}_t$ in $\mathbb{R}^3$, then
\begin{equation*}
	\nabla_{\mathcal{M}} F = \mathrm{P} \nabla_{\mathbb{R}^3} \hat F.
\end{equation*}
Note that the last expression does not depend on the choice of $\hat F$.

Similarly, for two tangent vector fields $\vec{u}$, $\vec{v}$ on $\mathcal{M}_t$ the covariant derivative $\nabla_{\vec{v}} \vec{u}$ of $\vec{u}$ along $\vec{v}$ is the tangential part of the conventional directional derivative of $\vec{u}$ along $\vec{v}$. That is
\begin{equation*}
	\nabla_{\vec{v}} \vec{u} = \mathrm{P} \nabla_{\mathbb{R}^3} \hat{\vec{u}} (\vec{v}),
\end{equation*}
where $\hat{\vec{u}}$ is an extension of $\vec{u}$ as above and $\nabla_{\mathbb{R}^3} \hat{\vec{u}} (\vec{v})$ is the Jacobian of $\hat{\vec{u}}$ applied to $\vec{v}$. Let $\vec{u} \coloneqq u^i \partial_i \vec{x}$ and $\vec{v} \coloneqq v^i \partial_i \vec{x}$ be their representations in the coordinate basis. The covariant derivative then reads
\begin{equation}\label{eq:covariant}
	\nabla_{\vec{v}} \vec{u} = \left( v^i \partial_i u^j + v^i u^k \Gamma^j_{ik} \right) \partial_j \vec{x}.
\end{equation}
The Christoffel symbols $\Gamma^j_{ik}$ are defined by the action of $\nabla$ on the coordinate basis
\begin{equation}\label{eq:christoffel}
	\nabla_{\partial_i \vec{x}} \partial_k \vec{x} = \Gamma^j_{ik} \partial_j \vec{x}.
\end{equation}
An explicit expression for the Christoffel symbols in terms of the first fundamental form is given by
\begin{equation*}
	\Gamma_{ik}^{j} =
	\frac{1}{2} {g^{mj} \left( \partial_i g_{km} + \partial_k g_{mi} - \partial_m g_{ik} \right)}.
\end{equation*}

Recall that the coordinate basis is in general not orthonormal. In Sec.~\ref{sec:eulerlagrange}, however, we want to rewrite the regularisation functional in terms of an orthonormal basis in order to simplify subsequent calculations. Therefore we now make the little extra effort of expressing the covariant derivative $\nabla_{\vec{v}} \vec{u}$ in terms of an arbitrary, possibly non-coordinate, frame $\{\vec{e}_1, \vec{e}_2\}$. Writing $\vec{u} = w^i \vec{e}_i$ and $\vec{v} = z^i \vec{e}_i$ in this basis, the corresponding formula reads
\begin{equation} \label{eq:covariant2}
	\nabla_{\vec{v}} \vec{u} = \left( \nabla_{\vec{v}} w^j + z^i w^k \tilde{\Gamma}^j_{ik} \right) \vec{e}_j.
\end{equation}
For scalar functions like $w^j$ the covariant derivative $\nabla_{\vec{v}} w^j$ is just the directional derivative along $\vec{v}$. It can be computed by using linearity of the covariant derivative with respect to its lower argument
\begin{equation*}
	\nabla_{\vec{v}} w^j = \nabla_{v^i \partial_i \vec{x}} w^j = v^i \nabla_{ \partial_i \vec{x}} w^j = v^i \partial_i w^j.
\end{equation*}
The $\tilde{\Gamma}^j_{ik}$ are the symbols associated to the new frame $\{\vec{e}_1, \vec{e}_2\}$. In analogy to \eqref{eq:christoffel}, they are defined by
\begin{equation}\label{eq:symbols}
	\nabla_{\vec{e}_i} \vec{e}_k = \tilde\Gamma^j_{ik} \vec{e}_j.
\end{equation}
For an orthonormal frame $\{\vec{e}_1, \vec{e}_2\}$ the following transformation law describes the relation between the two types of symbols
\begin{equation}\label{eq:transymb}
	\tilde{\Gamma}^j_{ik} = \delta^{jp} \alpha^h_p g_{hm}\left( \alpha^\ell_i \partial_\ell \alpha^m_k + \alpha^\ell_i \alpha^n_k \Gamma^m_{\ell n} \right),
\end{equation}
where $\alpha^j_i$ is the $\partial_j \vec{x}$-coordinate of $\vec{e}_i$, that is, $\vec{e}_i = \alpha^j_i \partial_j \vec{x}$ and $\delta^{jp}$ is the Kronecker delta. We give a short derivation of the equation above in Lemma \ref{thm:transymb} in the Appendix.

The covariant derivative of $\vec{u}$ at a point $(t,\xi)$ is a linear operator on $T_{\vec{x}(t,\xi)}\mathcal{M}_t$, mapping tangent vectors $\vec{v}$ to tangent vectors $\nabla_{\vec{v}} \vec{u}$. Its 2-norm (Frobenius norm) can be computed via
\begin{equation}\label{eq:frob}
	\norm{\nabla \vec{u} (t,\xi) }_2^2 = \abs{\nabla_{\vec{e}_1} \vec{u}(t,\xi)}^2 + \abs{\nabla_{\vec{e}_2} \vec{u}(t,\xi)}^2,
\end{equation}
where $\{\vec{e}_1, \vec{e}_2\}$ now is an arbitrary orthonormal basis of the tangent space $T_{\vec{x}(t,\xi)}\mathcal{M}_t$, that is, $\vec{e}_i \cdot \vec{e}_j = \delta_{ij}$. Note that, if $\vec{x}$ is a global parametrisation, then we can obtain a frame $\{\vec{e}_1, \vec{e}_2\}$ which is orthonormal everywhere by Gram-Schmidt orthonormalisation of the coordinate basis $\{ \partial_1 \vec{x}, \partial_2 \vec{x} \}$.

For a thorough treatment of the concepts introduced in this section we refer to \cite{Car92,Lee97}. More basic differential geometry texts are \cite{Car76,Kue06}, for example.

\paragraph{Temporal Derivatives.} Let $x \in \mathcal{M}_{t_0}$. Denote by $\psi: t \mapsto \psi(t) \in \mathcal{M}_t$ a trajectory through $\mathcal{M}$ with $\psi(t_0) = x$. We define the time derivative of $F$ following $\psi$ at $x$ as\footnote{Note that this composition of $F$ with $\psi$ is necessary, because the conventional partial derivative $\partial_t F(t_0,x)$ is meaningless in general.}
\begin{equation}\label{eq:timeder}
	\mathrm{d}^{\psi}_t F (t_0,x) \coloneqq \left. \frac{\mathrm{d}}{\mathrm{d}t} F(t,\psi(t)) \right|_{t=t_0}.
\end{equation}
There are a few special cases of this derivative that are worth mentioning. Let $\psi_{\vec{N}}$ be a trajectory for which the vector $\partial_t \psi (t_0)$ is orthogonal to $T_x \mathcal{M}_{t_0}$. The corresponding derivative is called normal time derivative and denoted by
\begin{equation} \label{eq:normaltimeder}
	\mathrm{d}^{\vec{N}}_t F (t_0,x) \coloneqq \left. \frac{\mathrm{d}}{\mathrm{d}t} F(t,\psi_{\vec{N}}(t)) \right|_{t=t_0}.
\end{equation}
Every Lagrangian coordinate system $\phi$ of $\mathcal{M}$ engenders a time derivative like \eqref{eq:timeder} in a natural way. For $x = \phi(t,y) \in \mathcal{M}_t$ the time derivative of $F$ following $\phi$ is defined by
\begin{equation}\label{eq:dVFt}
	\mathrm{d}^{\vec{V}}_t F (t_0,x) \coloneqq \left. \frac{\mathrm{d}}{\mathrm{d} t} F(t,\phi(t,y)) \right|_{t=t_0}.
\end{equation}
We choose the notation $\mathrm{d}^{\vec{N}}_t F$ and $\mathrm{d}^{\vec{V}}_t F$, because the derivative \eqref{eq:timeder} in fact only depends on the velocity of $\psi$ at $x$, see Lemma \ref{thm:reul}. Finally, if $\mathcal{M}$ is parametrised according to \eqref{eq:parametrisation}, which we assume from now on, then $\mathrm{d}^{\vec{V}}_t F = \partial_t f$. For illustration see Fig.~\ref{fig:sketch2}.

We stress that if $\vec{V}$ is the physical surface velocity, then $\mathrm{d}^{\vec{V}}_t$ is the natural time derivative for functions defined on $\bar{\mathcal{M}}$, since it measures the temporal change along trajectories $\phi(\cdot,y)$ of surface points. These trajectories are \emph{not} cell trajectories in general. They coincide only if the cells do not move by themselves and all the motion is surface motion.
\begin{figure}
	\begin{center}
	\begin{tikzpicture}
		\draw [thick, gray] (-3,0) to [out=30,in=150] (3,0);
		\draw [thick, gray] (-3,1) to [out=40,in=150] (4,1.8);
		\node [right] at (3,0) {$\mathcal{M}_{t_0}$};
		\node [below] at (4,1.8) {$\mathcal{M}_{t_0+\Delta t}$};
		\draw [-stealth', gray, thick] (0,.9) -- (.91*1.1,.91*2.6);
		\node [left, gray] at (.91*1.1,.91*2.6) {$\vec{V}$};
		\draw [-stealth', gray, thick] (0,.9) -- (0,2.4);
		\draw [-stealth', gray, thick] (0,.9) -- (.94*2.5,.94*2.4);
		\node [below, gray] at (.98*2.5,.94*2.4) {$\vec{m}$};
		\draw [-stealth', thick, dotted] (0,0) to [out=90,in=225] (0,.9) to [out=45,in=270] (1.3,1.7) to [out=90,in=225] (2.5,2.4) to [out=45,in=225] (1.25*2.5,1.25*2.4);
		\node [right] at (1.3*2.5,1.3*2.4) {$\gamma$};
		\draw [-stealth', thick] (0.5,0) to [out=135,in=270] (0,.9) to [out=90,in=300] (-.5,1.25*2.4);
		\node [above] at (-.5,1.25*2.4) {$\psi_{\vec{N}}$};
		\draw [-stealth', thick, dashed] (-0.5,0) to [out=45,in=225] (0,.9) to [out=45,in=270] (1.5,3.1);
		\node [above] at (1.5,3.1) {$\phi(\cdot,x_0)$};
	\end{tikzpicture}
	\end{center}
	\caption{Sketch of different trajectories through the evolving surface giving rise to different temporal derivatives. Corresponding velocities are depicted in grey.}
	\label{fig:sketch2}
\end{figure}
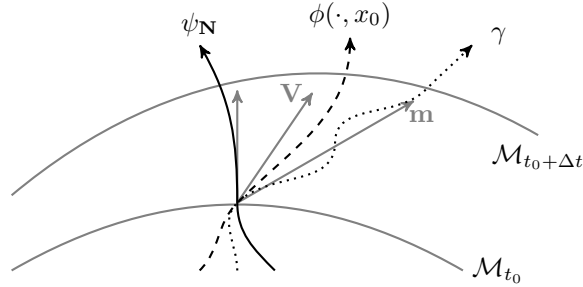
\begin{lemma} \label{thm:reul} With the definitions from above, we have
\begin{equation*}
	\mathrm{d}^{\vec{V}}_t F = \mathrm{d}^{\vec{N}}_t F + \nabla_\mathcal{M} F \cdot \vec{V}.
\end{equation*}
\end{lemma}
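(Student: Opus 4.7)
The plan is to reduce the assertion to the familiar Euclidean chain rule by passing to a smooth extension of $F$. Since $\bar{\mathcal{M}}$ is a compact smooth $3$-submanifold of $\mathbb{R}^4$, a tubular neighbourhood argument (combined with a partition of unity) produces a smooth function $\hat{F}$ defined on an open neighbourhood of $\bar{\mathcal{M}}$ in $\Reals \times \Reals^3$ such that $\hat{F}$ restricted to $\bar{\mathcal{M}}$ equals $F$. With $\hat{F}$ in hand, the ordinary chain rule applied to any trajectory $\psi$ through $\bar{\mathcal{M}}$ with $\psi(t_0)=x$ yields
\begin{equation*}
	\mathrm{d}^{\psi}_t F (t_0,x) = \partial_t \hat{F}(t_0,x) + \nabla_{\Reals^3} \hat{F}(t_0,x) \cdot \partial_t \psi(t_0).
\end{equation*}
This formula accomplishes two things at once: it shows that $\mathrm{d}^{\psi}_t F$ depends only on the velocity $\partial_t\psi(t_0)$ (not on the higher-order behaviour of $\psi$), thereby justifying the notations $\mathrm{d}^{\vec{V}}_t F$ and $\mathrm{d}^{\vec{N}}_t F$, and it simultaneously reduces the proof to comparing two velocity vectors.

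Next I would apply the formula to the two distinguished trajectories. The Lagrangian trajectory $t\mapsto \phi(t,y)$ has velocity $\vec{V}(t_0,x)$ at time $t_0$ by \eqref{eq:velocity}. A normal trajectory $\psi_{\vec{N}}$ consistent with the evolution of $\mathcal{M}$ has velocity $(\vec{V}\cdot\vec{N})\,\vec{N}$ at $(t_0,x)$, which is well-defined by Proposition \ref{thm:normvelo} since the normal component of $\vec{V}$ is independent of the chosen Lagrangian coordinate system. Subtracting the two instances of the chain rule cancels the $\partial_t\hat{F}$ terms and leaves
\begin{equation*}
	\mathrm{d}^{\vec{V}}_t F(t_0,x) - \mathrm{d}^{\vec{N}}_t F(t_0,x) = \nabla_{\Reals^3} \hat{F}(t_0,x) \cdot \bigl(\vec{V}(t_0,x) - (\vec{V}\cdot\vec{N})(t_0,x)\,\vec{N}(t_0,x)\bigr).
\end{equation*}

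The final step is to recognise the right-hand side as $\nabla_{\mathcal{M}}F\cdot\vec{V}$. The vector in parentheses is tangent to $\mathcal{M}_{t_0}$, so only the tangential part of $\nabla_{\Reals^3}\hat{F}$ contributes to the dot product; by the identity $\nabla_{\mathcal{M}} F = \mathrm{P}\nabla_{\Reals^3}\hat{F}$ recorded just after \eqref{eq:grad}, one may replace $\nabla_{\Reals^3}\hat{F}$ by $\nabla_{\mathcal{M}}F$ (and this substitution is independent of the choice of extension). Conversely, $\nabla_{\mathcal{M}}F$ is tangential, so adding back the normal contribution $(\vec{V}\cdot\vec{N})\vec{N}$ inside the dot product changes nothing. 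Both observations together give $\mathrm{d}^{\vec{V}}_t F - \mathrm{d}^{\vec{N}}_t F = \nabla_{\mathcal{M}} F \cdot \vec{V}$, which is the claim.

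The only genuinely non-trivial ingredient is the existence of the smooth extension $\hat{F}$, and the fact that the final expression does not depend on which extension is picked. Both are standard consequences of the tubular-neighbourhood theorem for compact submanifolds, and one could alternatively prove the lemma by working in the parametrisation \eqref{eq:parametrisation}, using $\mathrm{d}^{\vec{V}}_t F = \partial_t f$ together with the decomposition $\vec{V} = (\vec{V}\cdot\vec{N})\vec{N} + V^i \partial_i \vec{x}$ and the coordinate expression \eqref{eq:grad} for $\nabla_{\mathcal{M}} F$; this intrinsic computation mirrors the extrinsic one above and avoids extension arguments altogether.
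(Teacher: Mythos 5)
Your proof is correct, and it follows the same basic strategy as the paper --- extend $F$ off the moving surface and apply the Euclidean chain rule --- but the two arguments diverge in the choice of extension, and this changes the bookkeeping in an instructive way. The paper takes $\hat F$ to be \emph{normally constant} (constant on normal lines through each $\mathcal{M}_t$), which forces $\nabla_{\mathbb{R}^3}\hat F = \nabla_{\mathcal{M}}F$ on the nose; as a consequence $\mathrm{d}^{\vec N}_t F = \partial_t\hat F$ follows immediately because the tangential gradient is orthogonal to the normal velocity $\partial_t\psi_{\vec N}$, \emph{whatever its magnitude}, and no projection is ever needed. You instead work with an arbitrary smooth extension, so $\nabla_{\mathbb{R}^3}\hat F$ retains a normal component; to make the subtraction close you must therefore know that $\partial_t\psi_{\vec N}(t_0)$ equals exactly $(\vec V\cdot\vec N)\vec N$ --- for which you correctly appeal to the normal-velocity invariance of Proposition \ref{thm:normvelo}, whose level-set computation applies verbatim to any trajectory through $\mathcal{M}$ --- and then discharge the normal component of $\nabla_{\mathbb{R}^3}\hat F$ via the symmetry of the projector $\mathrm{P}$ and the identity $\nabla_{\mathcal{M}}F = \mathrm{P}\nabla_{\mathbb{R}^3}\hat F$. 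What your route buys is independence from the construction of the special extension (any extension works, and the argument visibly does not depend on which one) plus, as a by-product, an explicit proof that $\mathrm{d}^\psi_t F$ depends only on $\partial_t\psi(t_0)$, a fact the paper states but leaves to this lemma; what it costs is the extra ingredient of pinning down the normal speed exactly, which the paper's normally constant extension lets one bypass entirely.
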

\begin{proof}
The main idea in this derivation from \cite{CerFriGur05} is to consider the normally constant extension $\hat{F}$ of $F$: Let $\bar{\mathcal{N}} \subset \Reals^4$ be an open 
neighbourhood of $\bar{\mathcal{M}}$. If $\bar{\mathcal{N}}$ is chosen sufficiently small, it is possible to define a function $\hat{F}:\bar{\mathcal{N}} \to \Reals$ that is smooth, 
constant on normal lines through $\mathcal{M}_t$ for every $t$, and agrees with $F$ on $\bar{\mathcal{M}}$. Therefore
\begin{align*}
	\frac{\mathrm{d}}{\mathrm{d} t} F(t,\phi(t,y))
		&= \frac{\mathrm{d}}{\mathrm{d} t} \hat{F}(t,\phi(t,y))	\\
		&= \partial_t \hat{F} + \nabla_{\mathbb{R}^{3}} \hat{F} \cdot \partial_t \phi \\
		&= \mathrm{d}^{\vec{N}}_t F + \nabla_\mathcal{M} F \cdot \vec{V}
\end{align*}
The last equality holds because, by construction, $\nabla_{\mathbb{R}^3} \hat{F}$ equals $\nabla_\mathcal{M} F$ and
\begin{equation*}
	\mathrm{d}^{\vec{N}}_t F
		= \frac{\mathrm{d}}{\mathrm{d} t} \hat{F}(t,\psi_{\vec{N}}(t)) 
		= \partial_t \hat{F} + \nabla_{\mathbb{R}^3} \hat{F} \cdot \partial_t \psi_{\vec{N}}
		= \partial_t \hat{F}.
\end{equation*}
Finally, note that by definition \eqref{eq:velocity} we have $\partial_t \phi = \vec{V}$.
\end{proof}
Note that, since $\nabla_\mathcal{M} F$ is tangential, $\mathrm{d}^{\vec{V}}_t F$ actually only depends on the tangential part $\mathrm{P}\vec{V}$ of $\vec{V}$. Here $\mathrm P$ is the orthogonal projector defined in \eqref{eq:proj}.

Let $\vec{u}$ be a tangent vector field on the evolving surface $\mathcal{M}$, that is, a function $\vec{u} : \bar{\mathcal{M}} \to \mathbb{R}^3$ such that
\begin{equation*}
	\vec{u}(t,\cdot) : \mathcal{M}_t \to T \mathcal{M}_t
\end{equation*}
for all $t$. In analogy to the covariant derivative \eqref{eq:covariant} and to \eqref{eq:dVFt}, we define the following time derivative
\begin{equation}\label{eq:covariantt}
	\nabla_t \vec{u} = \mathrm{P} \mathrm{d}_t^{\vec{V}} \vec{u},
\end{equation}
where application of $\mathrm{d}_t^{\vec{V}}$ to $\vec{u}$ is understood componentwise. Again we have $\mathrm{d}_t^{\vec{V}} \vec{u} = \partial_t \vec{u}$. A normal time derivative for $\vec{u}$ could be defined as well but will not be needed in the sequel. As in the scalar case, $\nabla_t \vec{u}$ can be considered the natural time derivative for a tangent vector field $\vec{u}$, if $\vec{V}$ is the physical surface velocity. By setting
\begin{equation}\label{eq:Christoffel02}
	\nabla_t \partial_i \vec{x} = \Gamma^j_{0i} \partial_j \vec{x}
\end{equation}
we arrive at the following expression for $\nabla_t \vec{u}$ in local coordinates
\begin{equation*}
	\nabla_t \vec{u} = \left( \partial_t u^j + u^i\Gamma^j_{0i} \right) \partial_j \vec{x}.
\end{equation*}
The new symbols have the explicit representation
\begin{equation}\label{eq:Christoffel0}
	\Gamma^j_{0i} = g^{jk} \partial_{ti} \vec{x} \cdot \partial_k \vec{x},
\end{equation}
which can be verified by taking inner products of both sides of \eqref{eq:Christoffel02} with the coordinate basis vectors.

Again, in order to simplify calculations later on, we want to express this derivative in terms of an orthonormal frame $\{\vec{e}_1, \vec{e}_2\}$. We have
\begin{equation} \label{eq:covariantt2}
	\nabla_t \vec{u}	=	\left( \partial_t w^j + w^i\tilde{\Gamma}^j_{0i} \right) \vec{e}_j,
\end{equation}
where the symbols $\tilde{\Gamma}^j_{0i}$ are defined as before and satisfy an analogous transformation law
\begin{equation} \label{eq:transymb0}
	\tilde{\Gamma}^j_{0i}	=	\delta^{jp} \alpha^h_p g_{hm} \left( \partial_t \alpha^m_i + \alpha^k_i \Gamma^m_{0k} \right).
\end{equation}
The derivation is analogous to \eqref{eq:transymb} and can be found in Lemma \ref{thm:transymb} in the Appendix.

\section{Model Statement}\label{sec:model}
\subsection{Generalised Optical Flow Equation} We assume to be given an evolving surface $\mathcal{M}$ together with a known Lagrangian specification $\phi$ or, equivalently, a Eulerian one $\vec{V}$. In addition we are given scalar data $F$ on $\mathcal{M}$ which we want to track over time.

Our optical flow model is based on the so-called brightness constancy assumption. For every $x\in\mathcal{M}_0$ we seek a trajectory $\gamma(\cdot,x): t \mapsto \gamma(t,x) \in \mathcal{M}_t$ along which $F$ is constant. 
In other words, we assume existence of a Lagrangian specification $\gamma$ of $\mathcal{M}$ such that
\begin{equation}\label{eq:bca}
	F(t,\gamma(t,x)) = F(0,x).
\end{equation}
This implies that the time derivative of $F$ following $\gamma$ has to vanish identically. We deduce from Lemma \ref{thm:reul} that the following generalised optical flow equation has to hold
\begin{equation}\label{eq:ofc}
	\mathrm{d}_t^{\vec{N}} F + \nabla_\mathcal{M} F \cdot \partial_t \gamma = 0,
\end{equation}
where $\mathrm{d}_t^{\vec{N}} F$ is the normal time derivative as defined in \eqref{eq:normaltimeder} and $\nabla_\mathcal{M} F$ is the surface gradient of $F$, cf.~\eqref{eq:grad}.

Let us continue the discussion of Sec.~\ref{sec:motivation}. According to our definition of $\gamma$, a cell located at $x_0 \in \mathcal{M}_{t_0}$ moves with velocity
\begin{equation} \label{eq:totalmotion}
	\partial_t \gamma (t_0, \gamma^{-1}_{t_0}(x_0) ) = \vec{m}(t_0,x_0) = \vec{u}(t_0,x_0) + \vec{V}(t_0,x_0),
\end{equation}
where $\gamma^{-1}_{t_0}$ is the inverse of $\gamma(t_0,\cdot)$, $\vec{m}$ is the total observed velocity of a cell as introduced in Sec.~\ref{sec:motivation} and $\vec{u}$ is its velocity relative to $\vec{V}$. The second equality above is due to decomposition \eqref{eq:decomp}. According to our assumptions at the beginning of this section, we consider $\vec{V}$ as given so that the actual unknown is $\vec{u}$.

The remaining part of this subsection is devoted to rewriting \eqref{eq:ofc} in terms of local coordinates. First, we give an interpretation of the coordinates $u^i$ of $\vec{u}$ with respect to the basis $\{\partial_1 \vec{x}, \partial_2 \vec{x}\}$. Let $\beta = \left( \beta^1, \beta^2 \right) : I \times \Omega \to \Omega$ be the coordinate counterpart of $\gamma$, defined by the equation
\begin{equation*}
	\gamma(t,\vec{x}_0(\xi)) = \vec{x}(t,\beta(t,\xi)).
\end{equation*}
See also Fig.~\ref{fig:cd}. Taking time derivatives on both sides and dropping arguments yields
\begin{equation*}
	\vec{m} = \vec{V} + \partial_t \beta^i \partial_i \vec{x},
\end{equation*}
since $\partial_t \vec{x} = \vec{V}$. We can conclude that $u^i = \partial_t \beta^i$, which means that $(u^1, u^2)$ is just the 2D velocity of the parametrised trajectory $\beta$. It remains to rewrite \eqref{eq:ofc} in terms of $u^1$ and $u^2$.
\begin{lemma}
The optical flow equation \eqref{eq:ofc} is equivalent to
\begin{equation*}
	 \mathrm{d}_t^{\vec{V}} F + \nabla_\mathcal{M} F \cdot \vec{u} = 0.
\end{equation*}
In local coordinates it reads
\begin{equation*}
	\partial_t f + u^i \partial_i f = 0.
\end{equation*}
\end{lemma}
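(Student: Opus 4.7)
The plan is to obtain both identities from two ingredients already at hand: Lemma \ref{thm:reul}, which compares the two time derivatives $\mathrm{d}_t^{\vec{V}}F$ and $\mathrm{d}_t^{\vec{N}}F$, and the kinematic decomposition \eqref{eq:totalmotion}, which identifies $\partial_t \gamma$ (evaluated at the correct point) with $\vec{u}+\vec{V}$.

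First I would substitute $\mathrm{d}_t^{\vec{N}} F = \mathrm{d}_t^{\vec{V}} F - \nabla_\mathcal{M} F \cdot \vec{V}$ from Lemma \ref{thm:reul} into equation \eqref{eq:ofc}, and replace $\partial_t\gamma$ by $\vec{u}+\vec{V}$ via \eqref{eq:totalmotion}. The $\nabla_\mathcal{M} F \cdot \vec{V}$ terms cancel, leaving the first coordinate-free form $\mathrm{d}_t^{\vec{V}} F + \nabla_\mathcal{M} F \cdot \vec{u} = 0$. Each step is an equivalence, so the two forms of the optical flow equation are indeed equivalent.

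For the local coordinate form, I would recall the two identifications made in the text: on the one hand, $\mathrm{d}_t^{\vec{V}} F = \partial_t f$ (noted right after \eqref{eq:dVFt}, since $\mathcal{M}$ is parametrised by $\vec{x}(t,\xi)=\phi(t,\vec{x}_0(\xi))$); on the other hand, $u^i = \partial_t\beta^i$ are the components of $\vec{u}$ in the basis $\{\partial_1\vec{x},\partial_2\vec{x}\}$, i.e.\ $\vec{u} = u^i \partial_i \vec{x}$. Using the coordinate expression \eqref{eq:grad} for the surface gradient, the remaining computation is
\begin{equation*}
    \nabla_\mathcal{M} F \cdot \vec{u} = (g^{jk}\partial_j f\,\partial_k \vec{x}) \cdot (u^i \partial_i \vec{x}) = g^{jk} g_{ki}\, u^i \partial_j f = \delta^{j}_{i}\, u^i \partial_j f = u^i \partial_i f,
\end{equation*}
where I used \eqref{eq:metric} and $g^{jk}g_{ki}=\delta^j_i$. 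Combined with $\mathrm{d}_t^{\vec{V}}F = \partial_t f$, this yields $\partial_t f + u^i \partial_i f = 0$.

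There is no real obstacle here; the proof is essentially a one-line substitution followed by a routine index computation. The only thing worth being careful about is that \eqref{eq:totalmotion} expresses $\partial_t\gamma$ evaluated at $(t_0, \gamma_{t_0}^{-1}(x_0))$, whereas \eqref{eq:ofc} reads it at $(t_0,x_0)$ after composition; but since both sides are being evaluated along the same trajectory, this is just bookkeeping and does not affect the algebra.
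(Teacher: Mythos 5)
Your proof is correct and follows essentially the same route as the paper's: substitute Lemma \ref{thm:reul} and the decomposition $\partial_t\gamma = \vec{V}+\vec{u}$ into \eqref{eq:ofc}, cancel the $\nabla_\mathcal{M} F\cdot\vec{V}$ terms, and then pass to local coordinates via $\mathrm{d}_t^{\vec{V}}F=\partial_t f$ and the index computation $g^{jk}g_{ki}=\delta^j_i$. Your closing remark about the evaluation point of $\partial_t\gamma$ is a fair bookkeeping observation that the paper glosses over, but it changes nothing in the argument.
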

\begin{proof}
We prove the assertion in two steps. First we show that
\begin{equation*}
	 \mathrm{d}_t^{\vec{N}} F + \nabla_\mathcal{M} F \cdot \partial_t \gamma = \mathrm{d}_t^{\vec{V}} F + \nabla_\mathcal{M} F \cdot \vec{u},
\end{equation*}
and afterwards rewrite the right hand side in local coordinates.

By Lemma \ref{thm:reul} the normal time derivative can be written as
\begin{equation*}
	\mathrm{d}_t^{\vec{N}} F = \mathrm{d}_t^{\vec{V}} F - \nabla_\mathcal{M} F \cdot \vec{V}
\end{equation*}
The other summand of \eqref{eq:ofc} rewrites as
\begin{equation*}
	\nabla_\mathcal{M} F \cdot  \partial_t \gamma = \nabla_\mathcal{M} F \cdot \left( \vec{V} + \vec{u} \right).
\end{equation*}
Note that $\vec{V}$ is not assumed to be normal to $\mathcal{M}_t$, so that the term $\nabla_\mathcal{M} F \cdot \vec{V}$ does not vanish in general. However, it does appear twice with opposite signs. Finally recall that $\mathrm{d}_t^{\vec{V}} F = \partial_t f$ and by the definition of the first fundamental form
\begin{align*}
	\nabla_\mathcal{M} F \cdot \vec{u}
		&= g^{ij} \partial_i f \partial_j \vec{x} \cdot u^k \partial_k \vec{x}	\\
		&= g^{ij} g_{jk} \partial_i f u^k	\\
		&= \partial_i f u^i.
\end{align*}
\end{proof}
It is worth noting that the parametrised optical flow equation has precisely the same form as the classical 2D equation.

\begin{figure}
	\centering
	\begin{tikzpicture}[every node/.style={scale=1.2}]
		\matrix (m) [matrix of math nodes,row sep=3em,column sep=4em,minimum width=2em]{
	    	\Omega & \Omega \\
	    	\mathcal{M}_0 & \mathcal{M}_t \\};
		\path[-stealth]
	    	(m-1-1) edge node [left]	{$\vec{x}_0(\cdot)$}	(m-2-1)
	    			edge node [above]	{$\beta(t,\cdot)$}		(m-1-2)
	    	(m-2-1) edge node [above]	{$\gamma(t,\cdot)$}		(m-2-2)
	    	(m-1-2) edge node [right]	{$\vec{x}(t,\cdot)$}	(m-2-2);
	\end{tikzpicture}
	\caption{Commutative diagram describing the relation between $\beta$ and $\gamma$.}
	\label{fig:cd}
\end{figure}

\subsection{Regularisation}
Directly solving the optical flow equation in the new setting is just as ill-posed as it is in the classical setting. We use variational regularisation to overcome this. In particular, we propose to minimise the following quadratic spatiotemporal functional to recover a vector field $\vec{u}$ describing the tangential motion of data $F$.
\begin{equation} \label{eq:functional1}
\begin{aligned}
	\int_I \int_{\mathcal{M}_t} & \Big( \left( \mathrm{d}_t^{\vec{V}} F + \nabla_\mathcal{M} F \cdot \vec{u} \right)^2 + \lambda_0 \abs{\nabla_t \vec{u}}^2 + \lambda_1 \norm{\nabla \vec{u}}_2^2 \Big) \, \mathrm{d}A \, \mathrm{d}t
\end{aligned}
\end{equation}
Here $\lambda_0 \ge 0$ and $\lambda_1 > 0$ are regularisation parameters. Recall from Sec.~\ref{sec:preliminaries} that $\vec{u}$ is temporally regularised according to the assumed surface motion $\vec{V}$. Functional \eqref{eq:functional1} is a generalisation of the one presented in \cite{WeiSchn01b} for the Euclidean setting.

Moreover, if $\lambda_0 = 0 $, minimisation of \eqref{eq:functional1} is equivalent to minimising
\begin{equation} \label{eq:functional2}
	\int_{\mathcal{M}_t} \left( \left( \mathrm{d}_t^{\vec{V}} F + \nabla_\mathcal{M} F \cdot \vec{u} \right)^2 + \lambda_1 \norm{\nabla \vec{u}}_2^2 \right) \, \mathrm{d}A
\end{equation}
for every instant $t \in I$ separately. If $\mathcal{M}_t = \mathcal{M}_0$ for all $t$, the functional reduces to that of \cite{LefBai08}. The spatial regularisation term as defined in \eqref{eq:frob} is independent of the chosen parametrisation. This is an improvement over the functional chosen in \cite{KirLanSch13}.
\begin{example}
We end this section with a brief explanation, from an applied point of view, of why we regularise with covariant derivatives. Consider as a toy manifold the non-moving unit circle $\mathcal{M}_t = \mathcal{S}^1 \subset \Reals^2$ with parametrisation $\vec{x}(\theta) = \left( \cos \theta, \sin \theta \right)^\top$, $\theta \in [0,2\pi)$ and tangent basis $ \{ \partial_\theta \vec{x} \} $. Consider the tangent vector vector field $\vec{u} = c\partial_\theta \vec{x}$, where $c \neq 0$ is a fixed number. This field would describe a uniform translation of data $F$ along the circle, and thus should not be penalised by a regularisation term that enforces spatial smoothness.
But while conventional differentiation does not yield a vanishing vector field
\begin{equation*}
	\partial_\theta \vec{u} = c \partial_{\theta \theta} \vec{x} = -c \vec{x},
\end{equation*}
covariant differentiation does
\begin{equation*}
	\nabla_\theta \vec{u} = \mathrm{P} \partial_\theta \vec{u} = -c \mathrm{P} \vec{x} = -c (\vec{x} - \vec{x} \vec{x}^\top \vec{x} ) = 0.
\end{equation*}
Here we used the fact that $\vec{N} = \vec{x}$ and $\vec{x}^\top \vec{x} = 1$.

An analogous argument explains our penalisation of $\nabla_t \vec{u}$ of $\gamma$ instead of the unprojected derivative $\partial_t \vec{u}$.
\end{example}
\section{Euler-Lagrange Equations} \label{sec:eulerlagrange}
To simplify matters from now on we will assume having a global parametrisation $\vec{x}_0$ of $\mathcal{M}_0$ and thus a global parametrisation $\vec{x}$ of the whole evolving surface, cf.~\eqref{eq:parametrisation}. 
In addition, we express the functional \eqref{eq:functional1} in an orthonormal non-coordinate basis $\{\vec{e}_1, \vec{e}_2\}$ with
\begin{equation}\label{eq:onb}
	\vec{e}_i = \alpha^j_i \partial_j \vec{x}.
\end{equation}
This leads to wearisome calculations at first, which however pay off eventually when we compute the optimality conditions for the coordinates of $\vec{u}$ with respect to this frame. Note that an orthonormal coordinate basis does not exist in general \cite{Lee97}.

In this section we use the following notational convention. First, we identify $t$ with $\xi^0$. In addition, Latin indices are always understood to run over the set $\{1,2\}$, while Greek indices are reserved for $\{0,1,2\}$.

\subsection{Rewriting Functional \eqref{eq:functional1}}
Let
\begin{equation} \label{eq:unknown}
	\vec{u} = w^i \vec{e}_i
\end{equation}
be the representation of the unknown $\vec{u}$ in the orthonormal frame \eqref{eq:onb}. It follows that $u^j = w^i \alpha^j_i$. Recall from \eqref{eq:covariant2}, \eqref{eq:covariantt2} that the derivatives of $\vec{u}$ read
\begin{align*}
	\nabla_{\vec{e}_i} \vec{u} 	&= \left( \alpha^k_i \partial_k w^j + w^k \tilde{\Gamma}^j_{ik} \right) \vec{e}_j, \\
	\nabla_t \vec{u}			&=	\left( \partial_t w^j + w^i\tilde{\Gamma}^j_{0i} \right) \vec{e}_j.
\end{align*}
If we set $\alpha_\mu^0 = \delta^0_\mu$ and $\alpha_0^\mu = \delta_0^\mu$, the coefficients of $\vec{e}_j$ above can be rewritten using the unified notation
\begin{equation*}
	D_\mu w^j = \alpha_\mu^\nu \partial_\nu w^j + w^i \tilde{\Gamma}^j_{\mu i},
\end{equation*}
where $\mu=0,1,2$ and $j=1,2$. Consequently, defining the operator $D = (D_0, D_1, D_2)^\top$, the integrand of the regularisation term becomes a weighted 2-norm of the matrix $Dw = (D_\mu w^j)_{\mu j}$. The parametrised version of energy functional \eqref{eq:functional1} now takes the following compact form
\begin{equation}\label{eq:functional3}
\begin{aligned}
	\int_0^T \int_\Omega	&  \Big( \left( \partial_t f + w^j \alpha_j^i \partial_i f \right)^2 + \sum_{\mu, j} \lambda_\mu \left( D_\mu w^j \right)^2 \Big) \sqrt{\det g} \,  \mathrm{d}\xi \, \mathrm{d}t,
\end{aligned}
\end{equation}
where $\lambda_1 = \lambda_2$ and $g$ is the first fundamental form as introduced in \eqref{eq:metric}. Observe that the simple form of the regulariser originates from representing $\nabla_{\vec{e}_i} \vec{u}$ and $\nabla_{t} \vec{u}$ in an orthonormal basis. This also simplifies the computation of the optimality conditions. 

\subsection{Optimality System}
Denote the interior of  $I \times \Omega \subset \Reals^3$ by $D$. Functional \eqref{eq:functional3} takes the general form
\begin{equation*}
	\mathcal{E}(w)	= \int_D L(w, \nabla w) \, \mathrm{d}\xi,
\end{equation*}
where the Lagrangian $L$ is a smooth function of all $w^i$ and $\partial_\mu w^i$. Denote partial derivatives of $L$ by subscripts. A minimiser $(w^1,w^2)$ of $\mathcal{E}$ has to satisfy the following second-order elliptic system
\begin{equation} \label{eq:optsys}
	\begin{aligned}
		L_{w^m}	&= \sum_\mu \partial_\mu L_{\partial_\mu w^m}, \quad \text{in } D,\\
		0 		&= \sum_\mu n_\mu L_{\partial_\mu w^m}, \quad \text{on } \partial D,
	\end{aligned}
\end{equation}
for $m=1,2$ and where $n=(n_0,n_1,n_2)^\top$ is the outward normal to $D$. The derivatives of the Lagrangian read
\begin{align*}
	L_{w^m}					&=	\sqrt{\det g} \Big( \alpha_m^i \partial_i f \left( w^j \alpha^k_j \partial_k f + \partial_t f \right) + \sum_{\mu,j} \lambda_\mu \tilde{\Gamma}^j_{\mu m} D_\mu w^j \Big) ,  \\
	L_{\partial_\nu w^m}	&=	\sqrt{\det g} \sum_{\mu}\lambda_\mu \alpha^\nu_\mu D_\mu w^m.
\end{align*}
System \eqref{eq:optsys} in terms of derivatives of $w$ together with explicit formulas for all coefficients can be found in the Appendix. For more details on variational calculus we refer to \cite{CouHil53,GelFom63}.
\begin{remark}
	If $\mathcal{M}$ is a fixed plane, then $\alpha_\mu^\nu = \delta_\mu^\nu$ and all connection symbols vanish. Consequently, the boundary conditions become standard Neumann ones and system \eqref{eq:optsys} reduces to the one derived in \cite{HorSchu81} or \cite{WeiSchn01b}, respectively.
\end{remark}

\subsection{Discretisation and Numerical Aspects} \label{sec:discretisation}
We solve the Euler-Lagrange equations~\eqref{eq:optsys} with a standard finite difference scheme. The spatiotemporal domain $D$ is assumed to be the unit cube $(0, 1)^{3}$ and is approximated by a Cartesian grid with spacing of $h_{\sigma}$ in the direction of $\xi^\sigma$, where $h_1=h_2$. Grid points are denoted by $p$. Thus, $w_p^{m} := w^{m}(p)$ refers to the numerical approximation of $w^{m}$ at $p \in D$. Partial derivatives of the unknowns are approximated using central differences. They read
\begin{align*}
	\partial_{\sigma} w^{m}(p) \approx \frac{1}{2 h_{\sigma}} \left( w_{\cali{N}_{\sigma}^{+}(p)}^{m} - w_{\cali{N}_{\sigma}^{-}(p)}^{m} \right),
\end{align*}
\begin{align*}
	\partial_{\sigma \sigma} w^{m}(p) \approx \frac{1}{h_{\sigma}^{2}} \left( w_{\cali{N}_{\sigma}^{+}(p)}^{m} - 2w_{p}^{m} + w_{\cali{N}_{\sigma}^{-}(p)}^{m} \right),
\end{align*}
and
\begin{align*}
	\partial_{\nu \sigma} w^{m}(p) \approx \frac{1}{4h_{\nu} h_{\sigma}}\left( w_{\cali{N}_{\nu\sigma}^{++}(p)}^{m} - w_{\cali{N}_{\nu\sigma}^{+-}(p)}^{m} - w_{\cali{N}_{\nu\sigma}^{-+}(p)}^{m} + w_{\cali{N}_{\nu\sigma}^{--}(p)}^{m} \right),
\end{align*}
where the symbols $\cali{N}_{\sigma}^{\pm}(p)$ and $\cali{N}_{\nu\sigma}^{\pm\pm}(p)$ denote the neighbours of $w_{p}^{m}$ in the grid along coordinates $\sigma$ and $\nu, \sigma$, respectively. From the choice of the discrete derivatives an eleven-point stencil is obtained; see Fig.~\ref{fig:stencil}. Derivatives of the data $f$ and the surface parametrisation $\vec{x}$ are handled likewise, using central differences in the interior and inward differences at the boundaries.

\begin{figure}[t]
	\centering
		\begin{tikzpicture}[thick]
			\node at (0,0) [circle, fill=black, inner sep=2pt, label=below right:$w_p^{m}$] {};
			\node at (0,1.5) [circle, fill=black, inner sep=2pt, label=above:$w_{\cali{N}_{1}^{-}(p)}^{m}$] {};
			\node at (1.5,1.5) [circle, fill=black, inner sep=2pt, label=above right:$w_{\cali{N}_{12}^{-+}(p)}^{m}$] {};
			\node at (1.5,0) [circle, fill=black, inner sep=2pt, label=right:$w_{\cali{N}_{2}^{+}(p)}^{m}$] {};
			\node at (0,-1.5) [circle, fill=black, inner sep=2pt, label=below:$w_{\cali{N}_{1}^{+}(p)}^{m}$] {};
			\node at (-1.5,-1.5) [circle, fill=black, inner sep=2pt, label=below left:$w_{\cali{N}_{12}^{+-}(p)}^{m}$] {};
			\node at (-1.5,1.5) [circle, fill=black, inner sep=2pt, label=above left:$w_{\cali{N}_{12}^{--}(p)}^{m}$] {};
\node at (-1.5,0) [circle, fill=black, inner sep=2pt, label=left:$w_{\cali{N}_{2}^{-}(p)}^{m}$] {};
			\node at (1.5,-1.5) [circle, fill=black, inner sep=2pt, label=below right:$w_{\cali{N}_{12}^{++}(p)}^{m}$] {};
			\node at (xyz cs:z=2) [circle, fill=black, inner sep=2pt, label=below:$w_{\cali{N}_{0}^{-}(p)}^{m}$] {};
			\node at (xyz cs:z=-2) [circle, fill=black, inner sep=2pt, label=above:$w_{\cali{N}_{0}^{+}(p)}^{m}$] {};
			\draw[-] (-1.5,1.5) -- (1.5,1.5);
			\draw[-] (-1.5,-1.5) -- (1.5,-1.5);
			\draw[-] (1.5,1.5) -- (1.5,-1.5);
			\draw[-] (-1.5,1.5) -- (-1.5,-1.5);
			\draw[-] (-1.5,0) -- (1.5,0);
			\draw[-] (0,-1.5) -- (0,1.5);
			\draw[-] (xyz cs:z=-2) -- (xyz cs:z=2);
		\end{tikzpicture}
\caption{Eleven point stencil arising from the discretisation.}
\label{fig:stencil}
\end{figure}
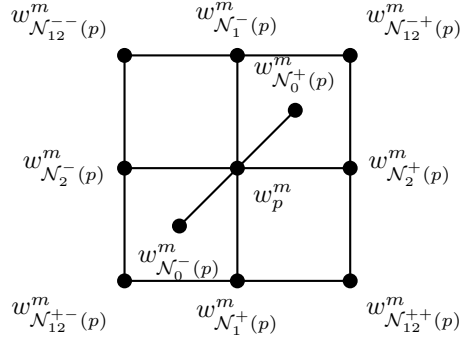

However, the resulting (sparse) linear system is underdetermined from equations~\eqref{eq:optsys} alone, because the approximations used for the mixed derivatives of $w^{m}$ refer to points not occurring in any boundary condition. Thus, at every grid point $p\in C\subset\partial D$ with
\begin{equation}
	C := \left( \{\xi^{1} = 0\} \cup \{\xi^{1} = 1\} \right) \cap  \left( \{\xi^{2} = 0\} \cup \{\xi^{2} = 1\} \right)
\label{eq:corners}
\end{equation}
additional boundary conditions are needed. At these points we set $n = (0, \pm 1, \pm 1)^{\top}$ in the boundary condition \eqref{eq:optsys}, which is a vector pointing in the direction of the undetermined grid neighbour. This leads to expressions of the form $\pm \partial_{1} w^{m} \pm \partial_{2} w^{m}$, which, interpreted as a directional derivative, can be approximated by
\begin{equation*}
	\frac{1}{2\sqrt{2}h_{\sigma}} \left( w_{\cali{N}_{ij}^{\pm\pm}(p)}^{m} - w_{\cali{N}_{ij}^{\mp\mp}(p)}^{m} \right).
\end{equation*}
\section{Experiments} \label{sec:exp}

\subsection{Zebrafish Microscopy Data}\label{sec:data}
As mentioned before, the biological motivation for this work are cellular image sequences of a zebrafish embryo. Endoderm cells expressing green fluorescent protein were recorded via confocal laser-scanning microscopy resulting in time-lapse volumetric (4D) images. See e.g.~\cite{MegFra03} for the imaging techniques.

The microscopy images were obtained during the gastrula period, which is an early stage in the animal's developmental process and takes place approximately five to ten hours post fertilisation. In short, the fish forms on the surface of a spherical-shaped yolk, which itself deforms over time. Detailed explanations and numerous illustrations can be found in~\cite{KimBalKimUllSchi95}. For the biological methods such as the fluorescence marker and the embryos used in this work we refer to~\cite{MizVerHeaKurKik08}.

The captured area is approximately $540 \times 490 \times 340\,\mu\text{m}^{3}$ and shows the pole region of the yolk. Figure~\ref{fig:embryo}, left column, depict two frames of the raw data. The sequence contains $77$ frames recorded in intervals of $240\,\text{s}$ with clearly visible cellular movements and cell divisions. The spatial resolution of the data is $512 \times 512 \times 44$\,voxels. Intensities are in the range $[0,1]$. In the following we denote by
\begin{equation*}
	\bar{F}^{\delta} \in [0,1]^{77 \times 512 \times 512 \times 44}
\end{equation*}
the unprocessed microscopy data approximating $\bar F$ from Sec.~\ref{sec:motivation}.

The important aspect about endodermal cells is that they are known to form a monolayer during gastrulation~\cite{WarNus99}. In other words, the radial extent is only a single cell. This crucial fact allows for the straightforward extraction of a surface together with an image of the stained cells. Figure~\ref{fig:embryo} illustrates the idea for two particular frames.

\begin{figure*}
	\centering
\includegraphics[width=0.19\textwidth]{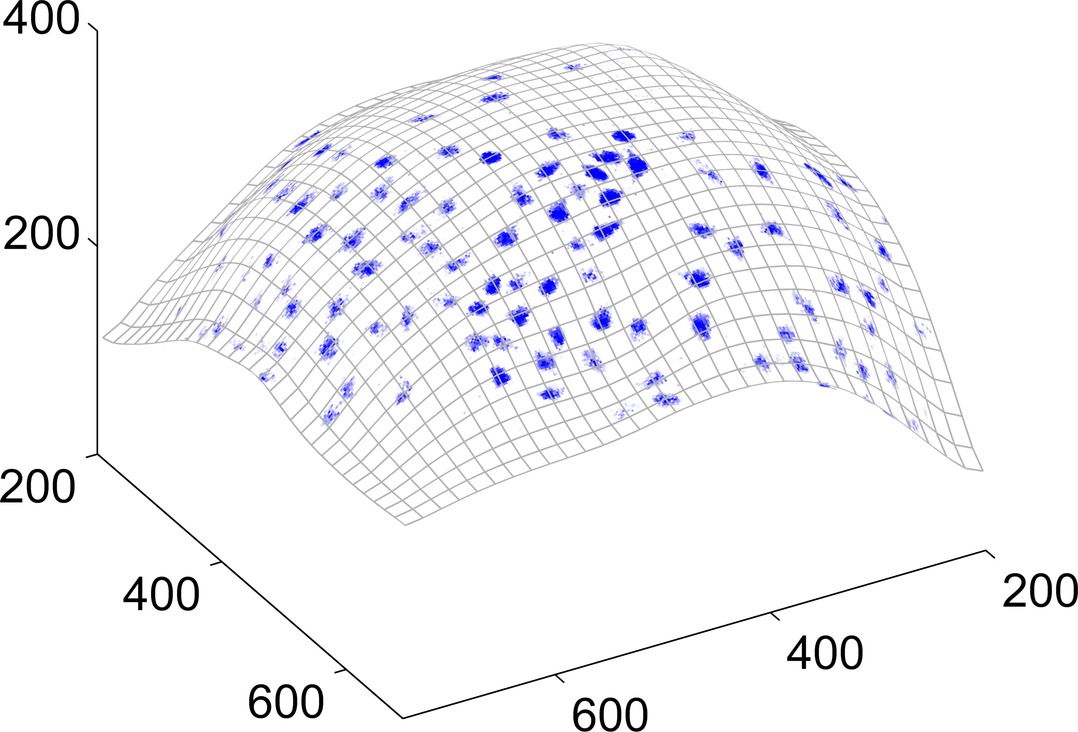}
	\hfill
\includegraphics[width=0.19\textwidth]{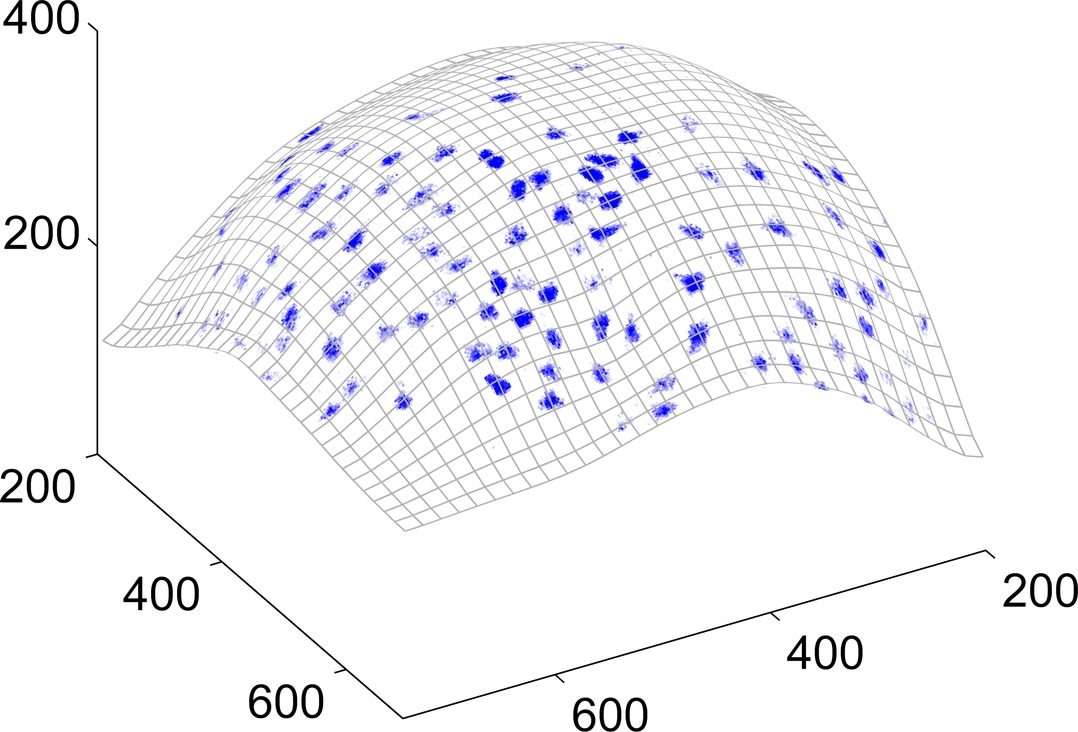}
	\hfill
\includegraphics[width=0.19\textwidth]{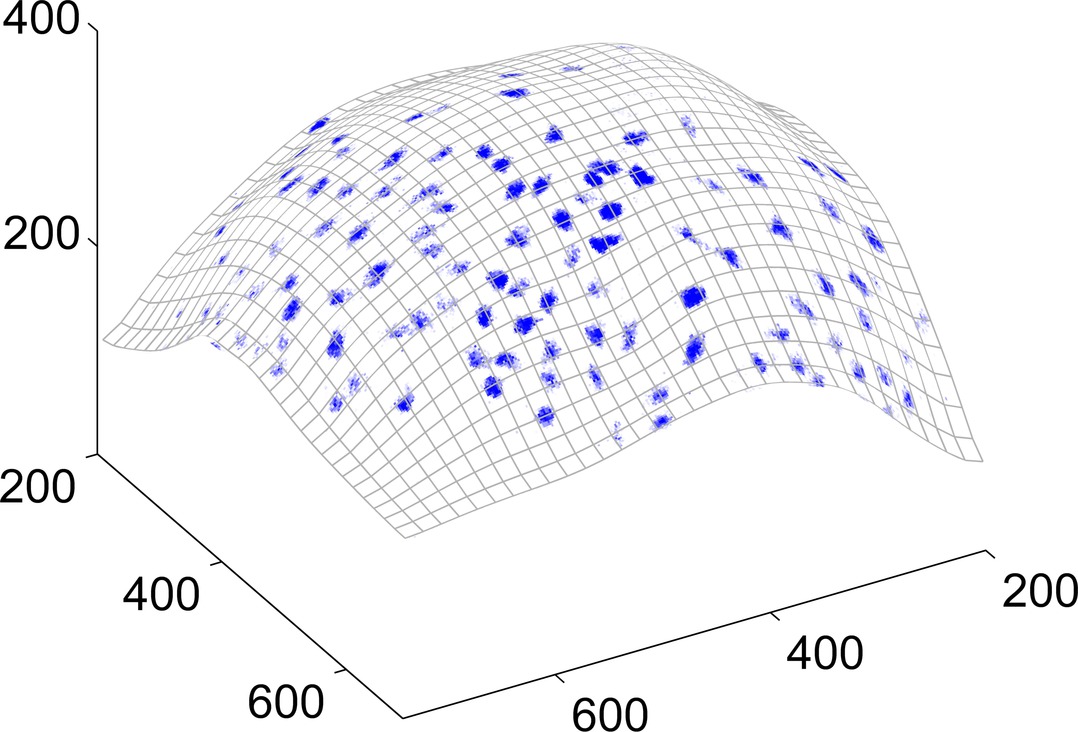}
	\hfill	
\includegraphics[width=0.19\textwidth]{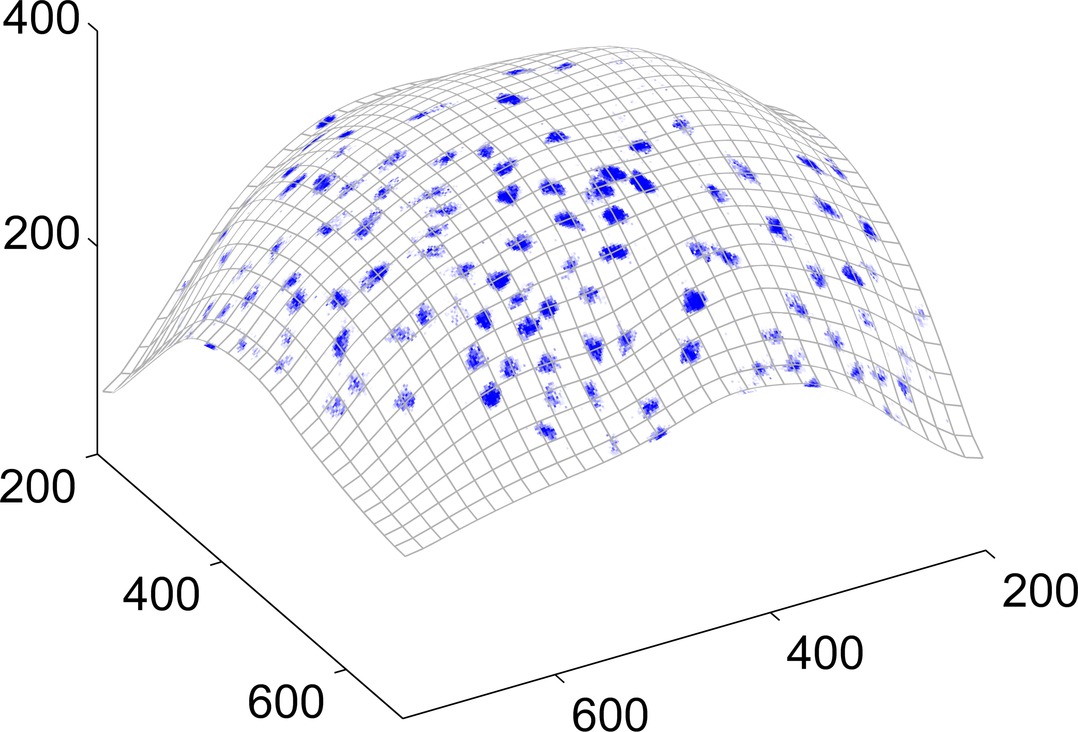}
	\hfill	
\includegraphics[width=0.19\textwidth]{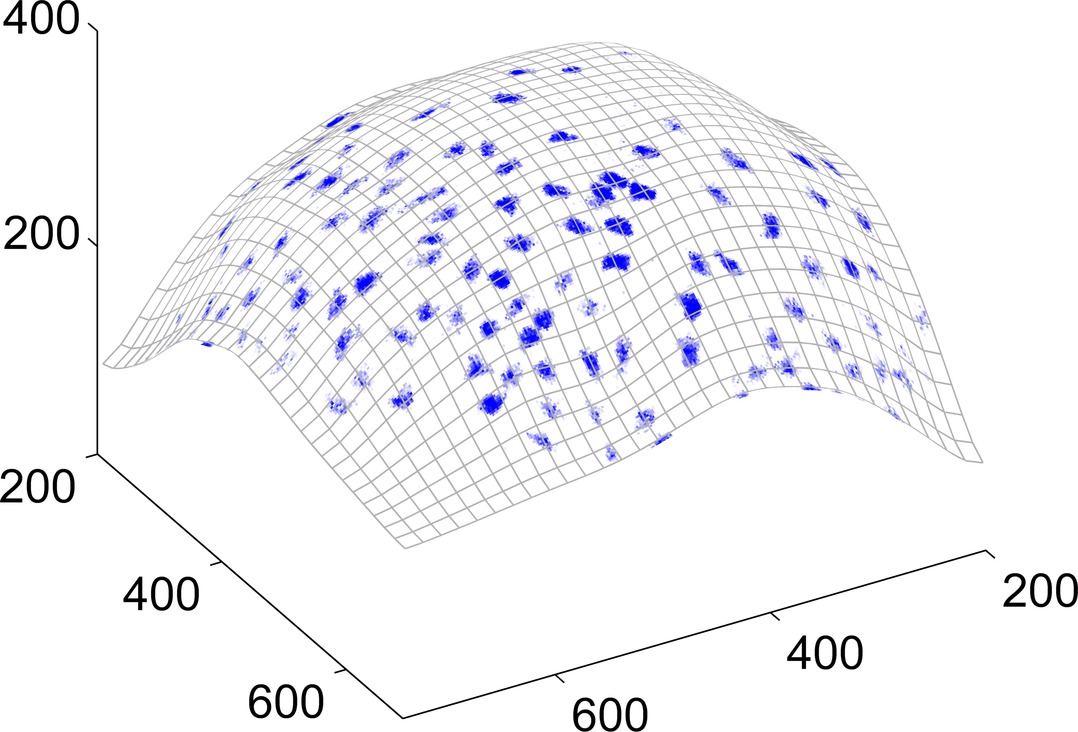} \\
\includegraphics[width=0.19\textwidth]{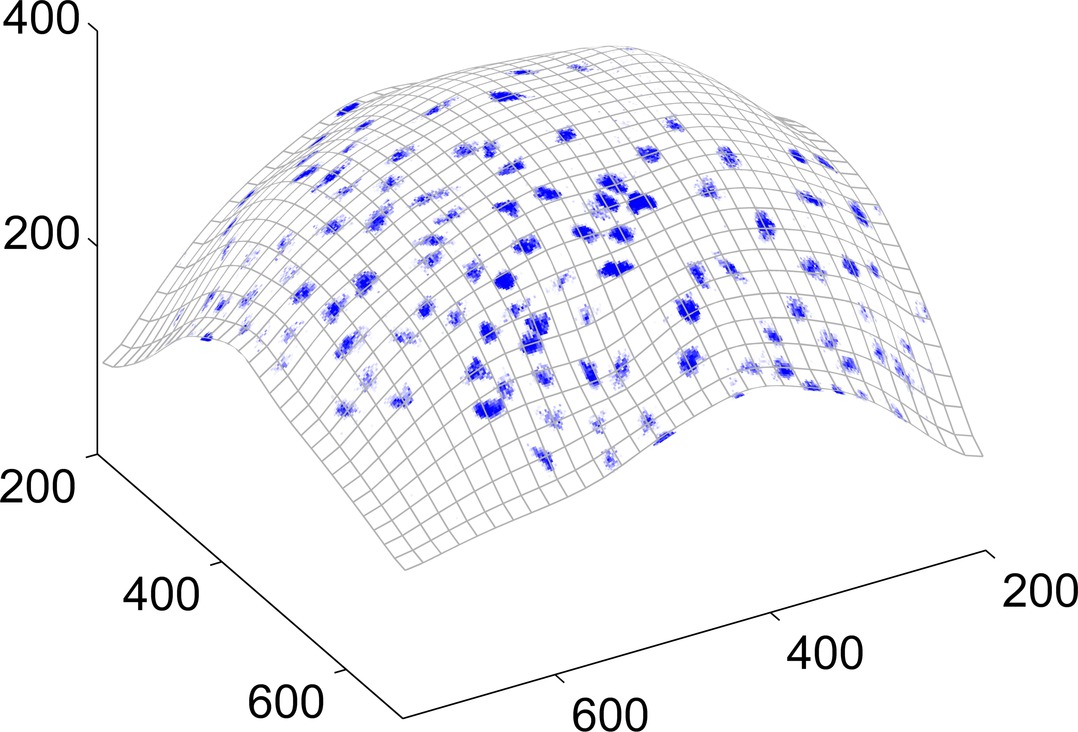}
	\hfill
\includegraphics[width=0.19\textwidth]{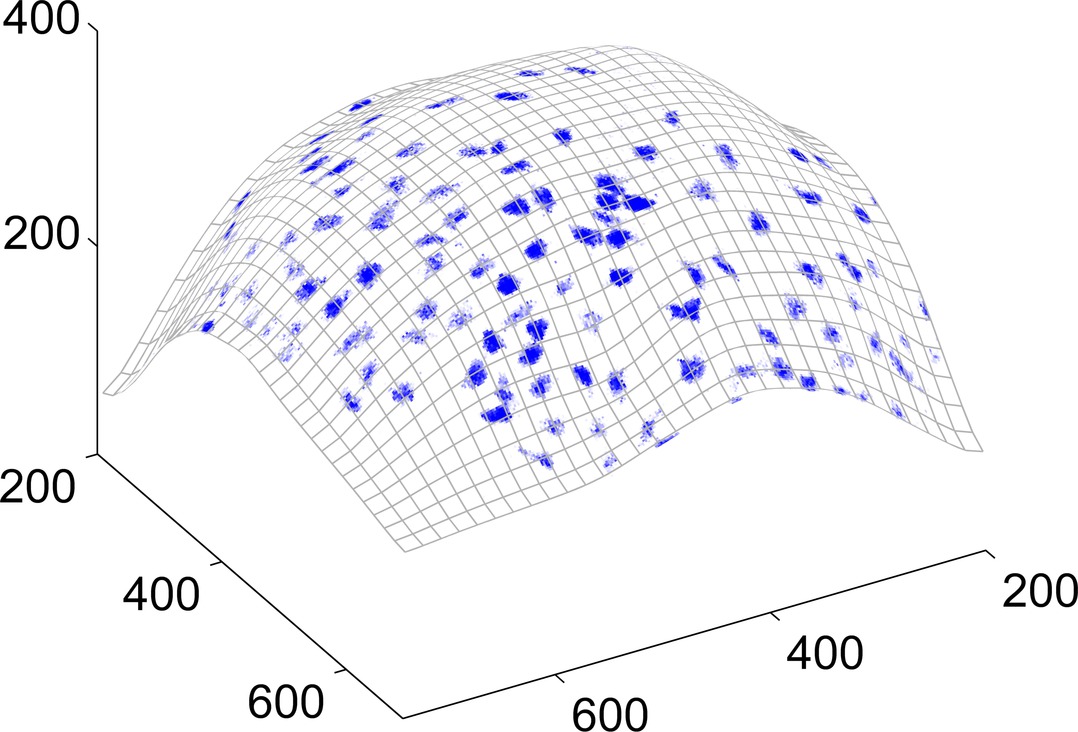}
	\hfill
\includegraphics[width=0.19\textwidth]{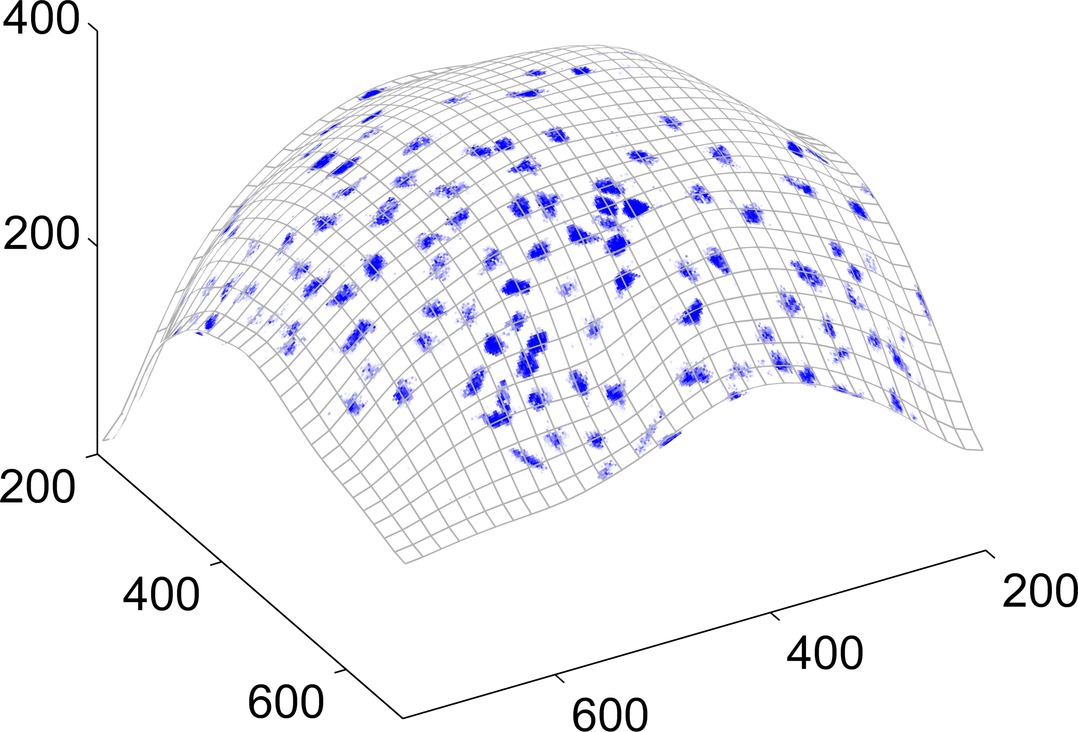}
	\hfill	
\includegraphics[width=0.19\textwidth]{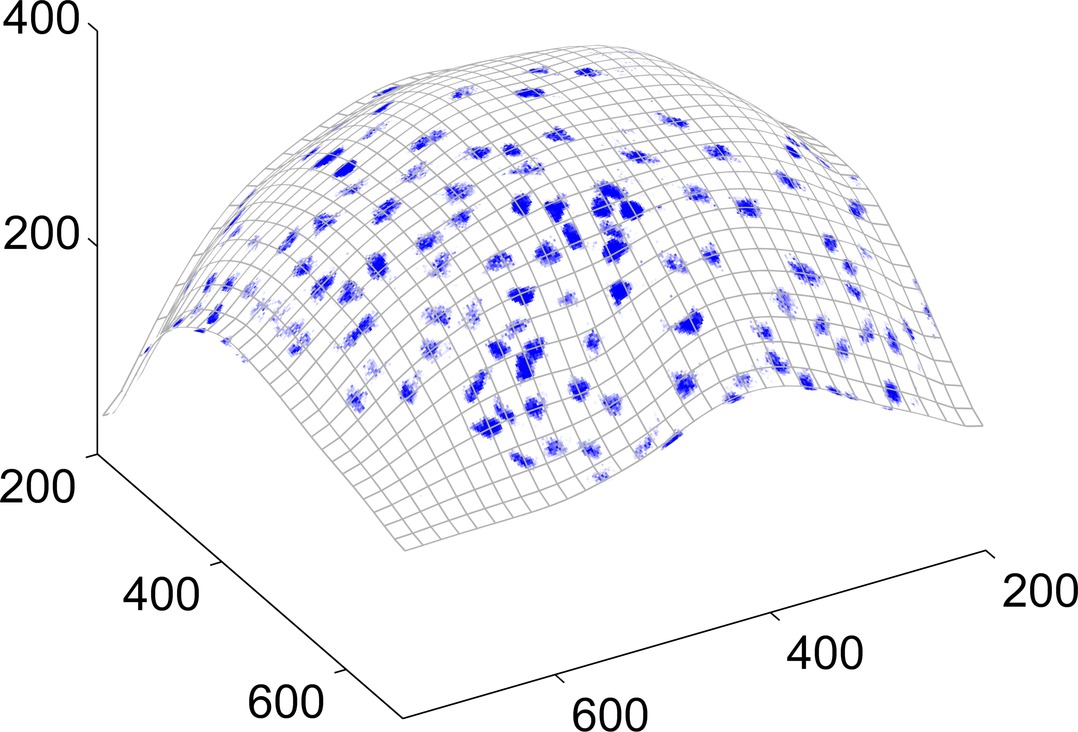}
	\hfill	
\includegraphics[width=0.19\textwidth]{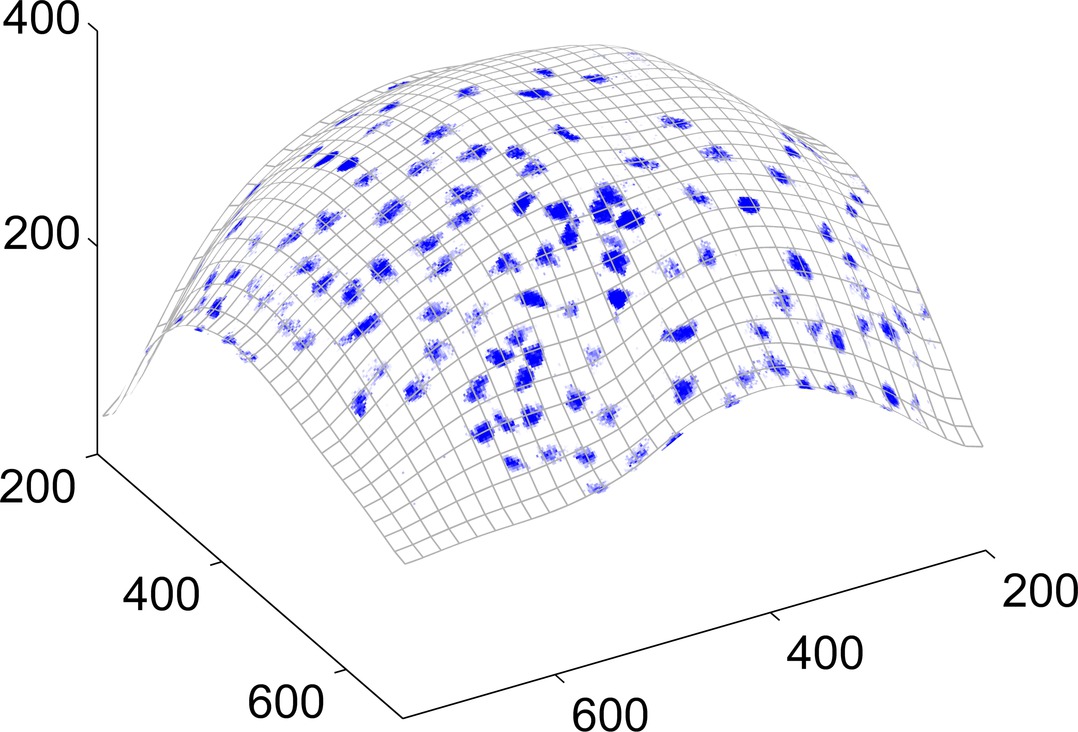} \\
\includegraphics[width=0.19\textwidth]{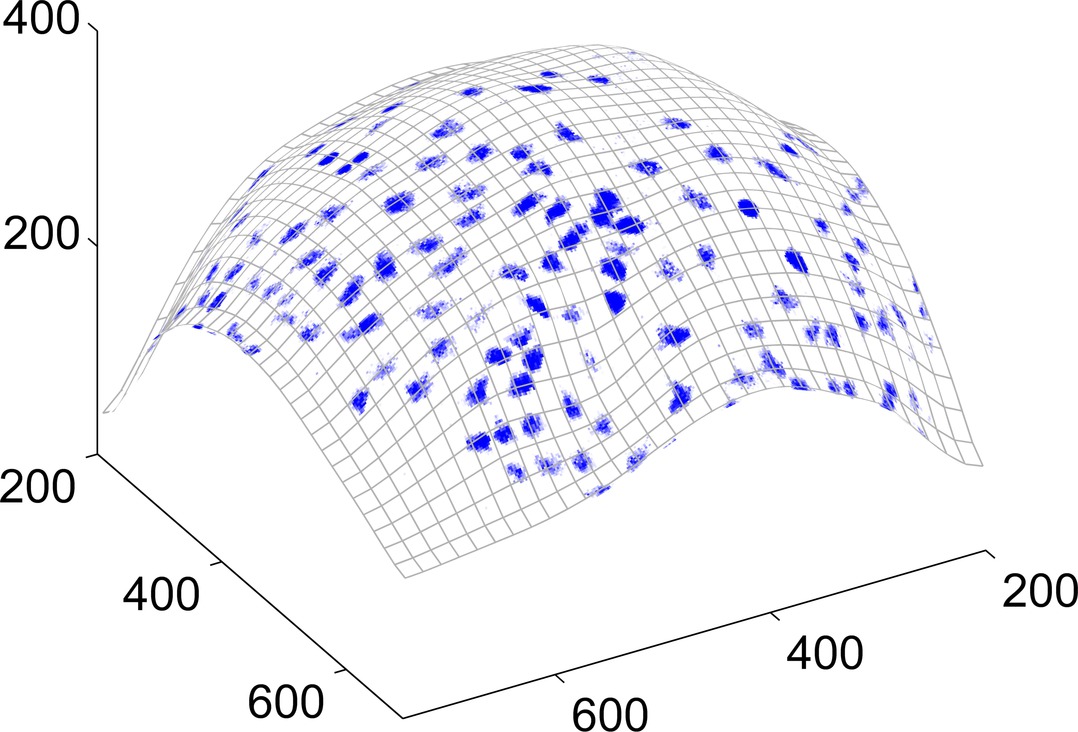}
	\hfill
\includegraphics[width=0.19\textwidth]{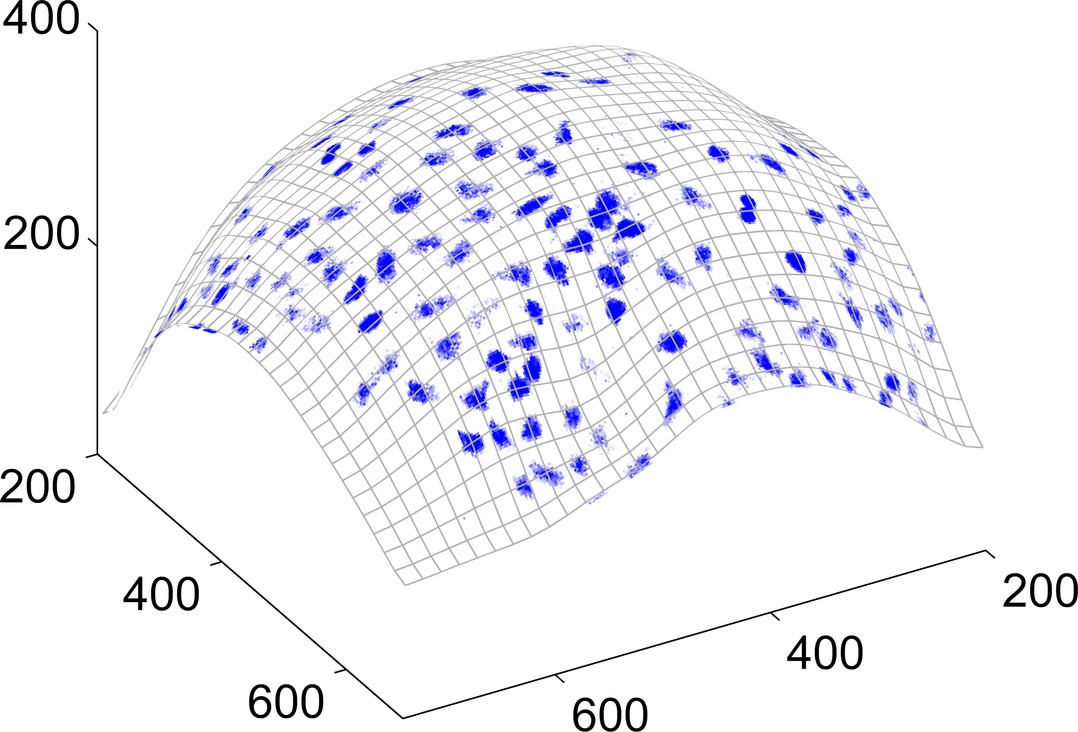}
	\hfill
\includegraphics[width=0.19\textwidth]{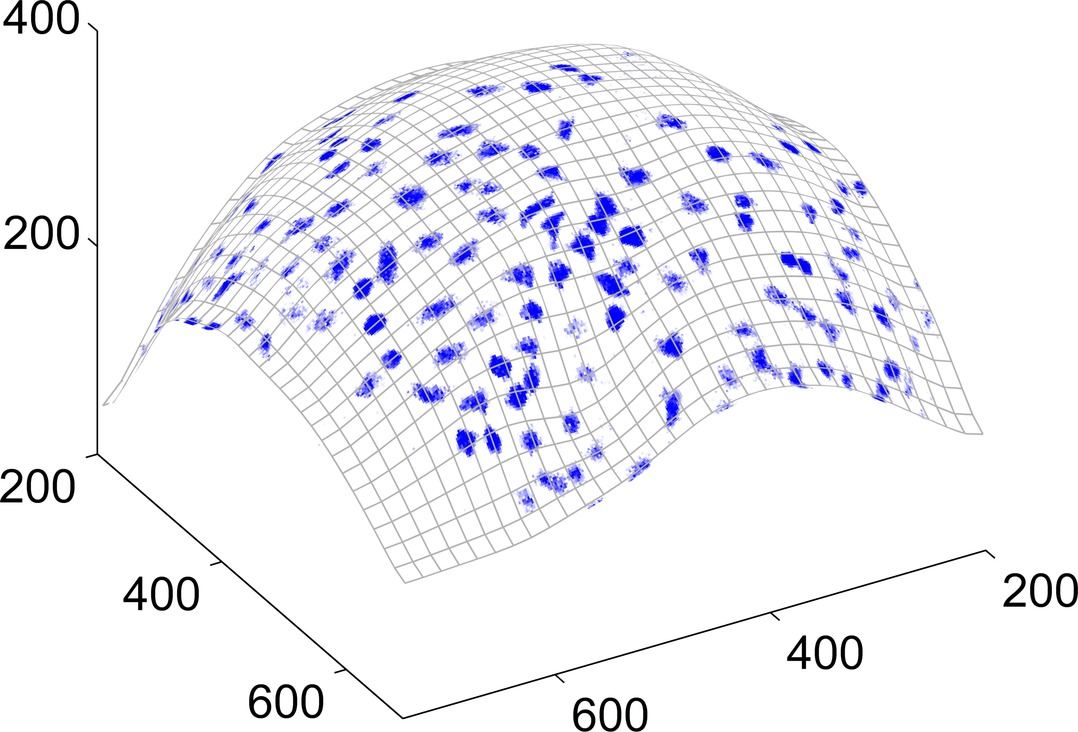}
	\hfill	
\includegraphics[width=0.19\textwidth]{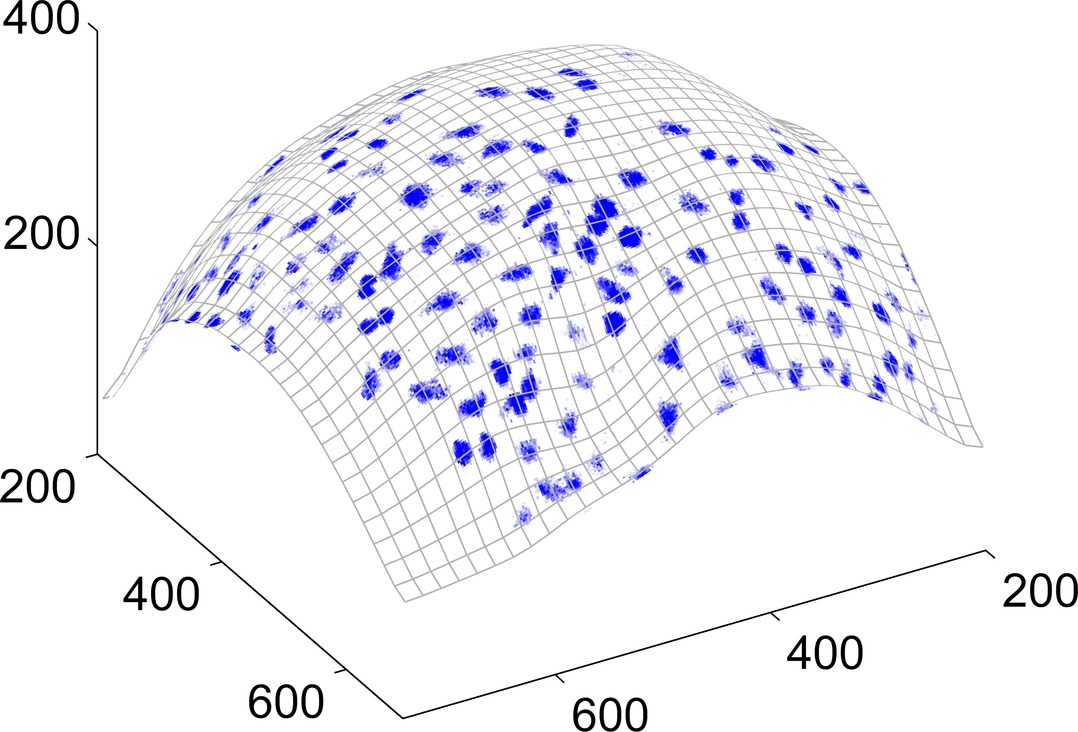}
	\hfill	
\includegraphics[width=0.19\textwidth]{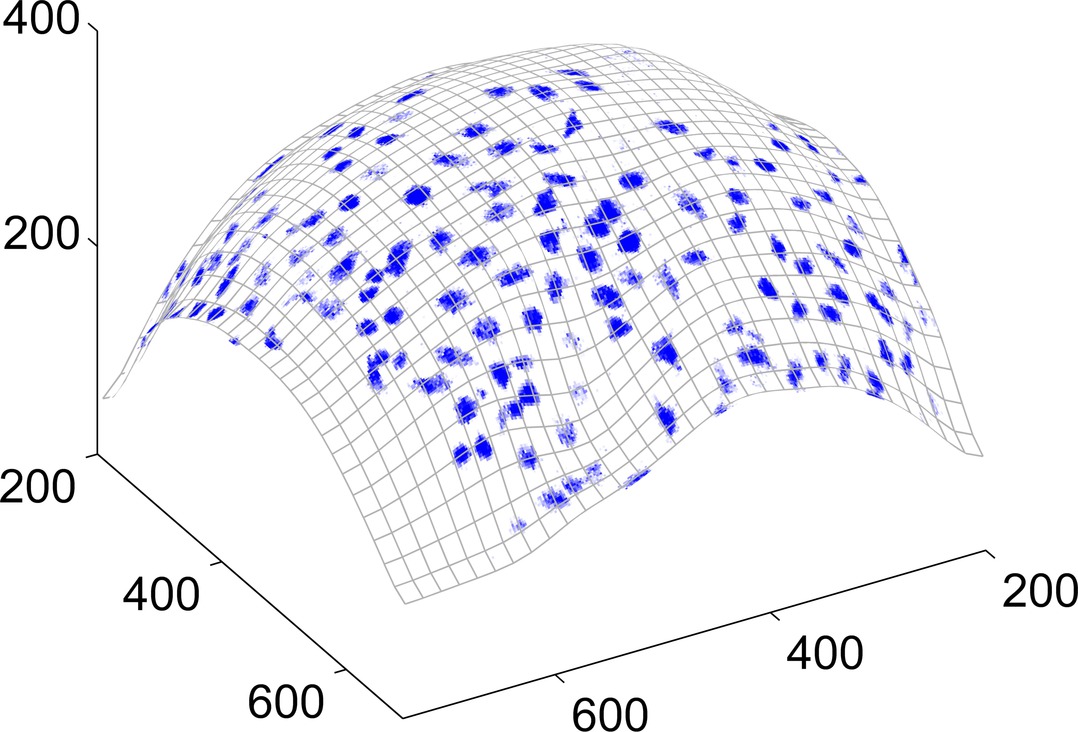}
	\caption{Sequence of embryonic zebrafish images. Depicted are frames no.\ 46 to 60 of the entire sequence (aligned left to right, top to bottom).}
	\label{fig:sequence}
\end{figure*}

\subsection{Preprocessing and Acquisition of Surface Data} \label{sec:parametrisation}

In this section, we relate the mathematical concepts introduced in Sec.~\ref{sec:preliminaries} to the 4D microscopic images. We give a concrete global parametrisation suitable for this type of data and discuss the necessary preprocessing steps leading to an approximation of the evolving surface $\bar{\mathcal{M}}$ together with an approximation of the scalar quantity $F$.

The first step is to extract approximate cell centres from the raw microscopy data. As the positions of cells are characterised by local maxima in intensity they can be reliably obtained as follows. For every frame, a Gaussian filter is applied to the volumetric data $\bar{F}^{\delta}$. Then, local maxima with respect to intensity are computed and treated as cell centres whenever they exceed a certain threshold.

Next we fit a surface to the cell positions. For every frame, this is done by least squares fitting of a piecewise linear function combined with first-order regularisation. From that we get a height field $z^\delta \in \mathbb{R} ^{77 \times 512 \times 512}$ which completely describes the discrete evolving surface. Finally, the numerical approximation $f^\delta$ of $f$ is calculated by linear interpolation of $\bar{F}^\delta$ and evaluation at surface points determined by $z^\delta$.

The combination of all processing steps described above turns the original 4D array $\bar{F}^\delta$ into two three-dimensional arrays
\begin{align*}
	f^{\delta} &\in [0,1]^{77 \times 512 \times 512}, \\
	z^{\delta} &\in \mathbb{R} ^{77 \times 512 \times 512}.
\end{align*}
Figure~\ref{fig:embryo}, right column, illustrates both surfaces and the obtained images for two particular frames. In Fig.~\ref{fig:sequence}, a segment of the sequence is shown.

Let us quickly relate $z^\delta$ to the quantities introduced in Sec.~\ref{sec:evolsurf}. The mapping
\begin{equation*}
(t, \xi^1, \xi^2) \mapsto (\xi^1, \xi^2, z^\delta(t, \xi^1, \xi^2)),
\end{equation*}
where $(t, \xi^1, \xi^2)$ ranges over a ${77 \times 512 \times 512}$ grid, is the discrete parametrisation. The corresponding $\phi$ is the function that identifies surface points with identical $(\xi^1, \xi^2)$ coordinates. Thus, the surface motion $\vec{V}$ occurs only in direction of $x^{3}$. However, we stress that this particular parametrisation was chosen due to the lack of knowledge about the true motion of material points on the surface.

\subsection{Solving for the Velocity Fields} \label{sec:steps}

After the preprocessing of the microscopy data as explained above, the following steps lead to the desired solution:
\begin{enumerate}
	\item From the parametrisation compute approximations of $\partial_i \vec{x}$, $g$, $\Gamma_{ij}^k$, $\alpha_i^j$, $\tilde \Gamma_{\mu j}^k$ as explained in Sec.~\ref{sec:preliminaries}. Like all other quantities the $\alpha_i^j$ are functions of space and time. They can be computed, for example, by Gram-Schmidt orthonormalisation of the coordinate basis $\{\partial_1 \vec{x}, \partial_2 \vec{x}\}$.
	\item Discretise optimality system \eqref{eq:optsys2} as described in Sec.~\ref{sec:discretisation}.
	\item Compute coefficients \eqref{eq:coefficients} of discretised optimality system from the quantities calculated in step 1.
	\item Solve resulting linear system for unknowns $w$, see Sec.~\ref{sec:numresults}.
	\item Compute relative tangential velocity $\vec{u}$ via~\eqref{eq:unknown} and recover total velocity $\vec{m} = \vec{u} + \vec{V}$.
	\item Finally, cell trajectories can be approximated by computing the integral curves of $\vec{m}$, see \eqref{eq:integralcurve} in Sec.~\ref{sec:numresults}.
\end{enumerate}

\subsection{Visualisation}

In order to illustrate the computed tangential velocity fields we apply the standard flow colour-coding from \cite{BakSchaLewRotBla11}.\footnote{Some figures may appear in colour only in the online version.} This coding turns $\mathbb{R}^2$ vector fields into colour images according to a particular 2D colour space.

However, we are interested in visualising tangential vector fields on an embedded manifold, which are functions with values in $\mathbb{R}^3$. To be able to apply the colour-coding mentioned above we turn the computed optical flow fields $\vec{u}$ into $\mathbb{R}^2$ vector fields in the following way
\begin{equation}
	\vec{u} \mapsto \frac{\abs{\vec{u}}}{\abs{\mathrm{P}_{x^{3}} \vec{u}}} \mathrm{P}_{x^{3}} \vec{u},
	\label{eq:scaledprojection}
\end{equation}
where $\mathrm{P}_{x^{3}}: (x^{1}, x^{2}, x^{3}) \mapsto (x^{1}, x^{2}, 0)$ is the orthogonal projection onto the $x^1$-$x^2$ plane. If the scaling factor $\frac{\abs{\vec{u}}}{\abs{\mathrm{P}_{x^{3}} \vec{u}}}$ were omitted, the new vector field would simply be the original one as viewed from above. The reason for including this scaling are vectors having a large $x^{3}$-component, which would otherwise seem unnaturally short. Finally, the image resulting from the colour-coding of vector field~\eqref{eq:scaledprojection} is painted back on the surface. Figure~\ref{fig:flow} illustrates the colour-coded tangential vector field $\vec{u}$ and the colour space. In all figures the surface is slightly smoothed for better visual effect.

\begin{figure*}
	\centering
	\includegraphics[width=0.45\textwidth]{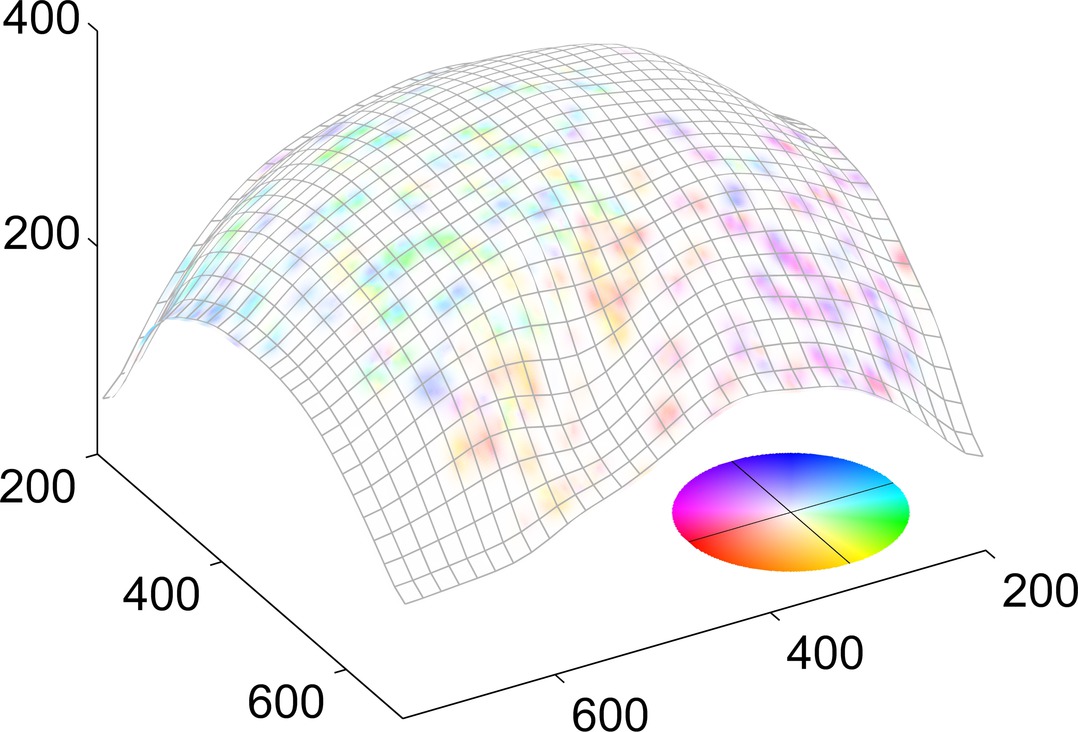}
	\hfill
	\includegraphics[width=0.40\textwidth]{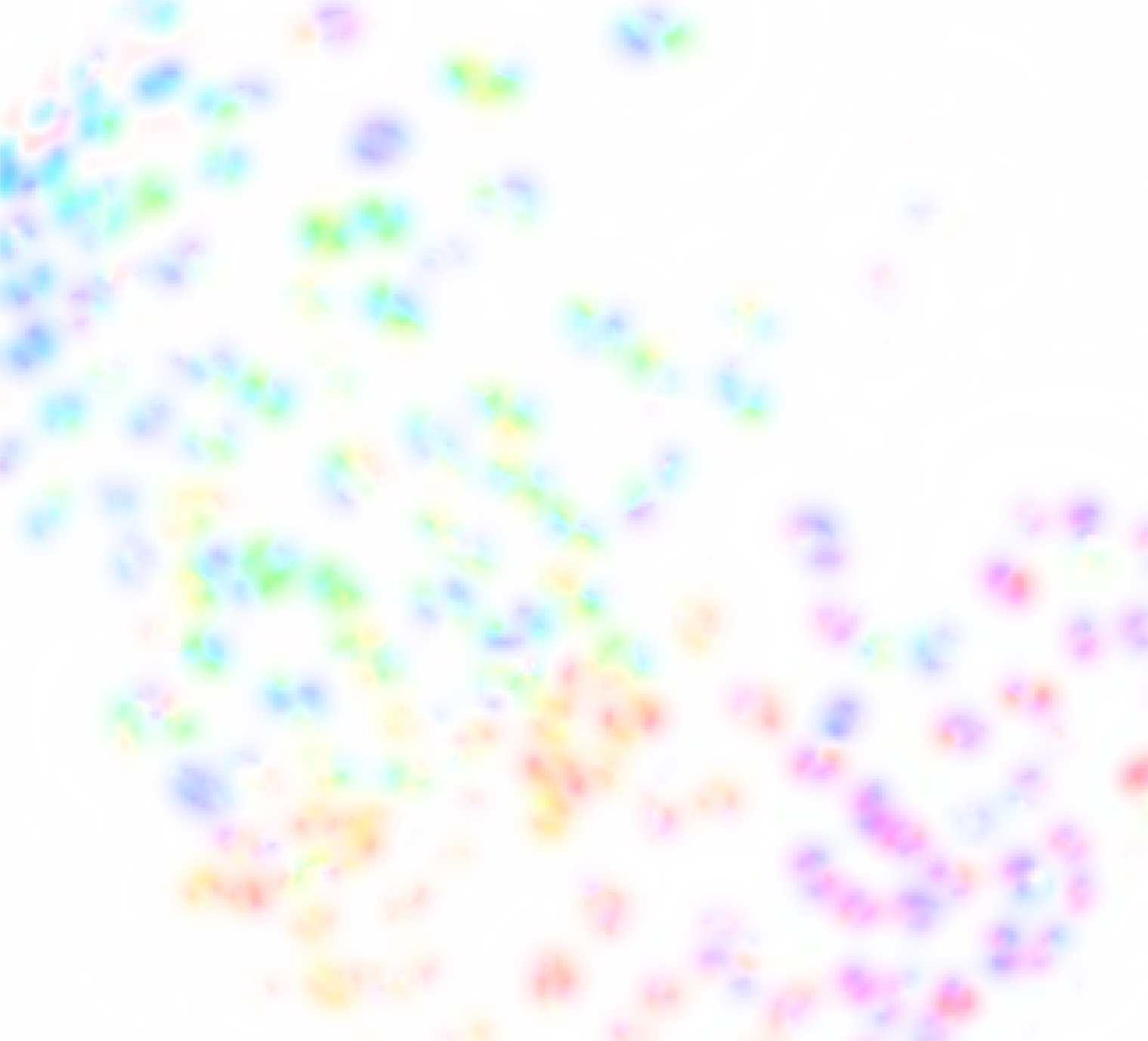}
	\caption{Optical flow field between frames 60 and 61 of the sequence. Colours indicate direction whereas darkness of a colour indicates the length of the vector. Note that the colour circle has been enlarged for better visibility. Parameters are $\lambda_{0} = c/100$ and $\lambda_{1} = \lambda_{2} = c$, where $c \coloneqq 0.5$.}
	\label{fig:flow}
\end{figure*}

\subsection{Numerical results} \label{sec:numresults}

We conducted four experiments with different parameter settings and minimised functional~\eqref{eq:functional1} as outlined in Sec.~\ref{sec:steps}. Due to a low cell density near the boundaries we only worked with a part of the whole dataset. The grid dimensions were $( N_0, N_1, N_2) = (30, 370, 370)$. Accordingly, grid spacing was set to $h_{\sigma} = 1/N_{\sigma}$. Our implementation was done in Matlab and all experiments were performed on an Intel Xeon E5-1620 3.6GHz workstation with 128GB RAM. We used the Generalized Minimal Residual Method (GMRES) to solve the resulting linear system. As a termination criterion we chose a relative residual of $0.02$ and a maximum number of 2000 iterations with a restart every 30 iterations. The resulting runtime was approximately two hours. In Table~\ref{tab:runtimes}, the parameters for all experiments are listed, and the resulting running times and relative residuals are given. Implementation and data are available on our website.\footnote{\url{http://www.csc.univie.ac.at}}

\begin{table}
	\centering
	\begin{tabular}{c|c|c|c|c|c}
		No. & $\lambda_{0}$ & $\lambda_{1} = \lambda_{2}$ & Runtime & Rel. residual \\
		\hline
		1 & $c$ & $c$ & $2.05\, \mathrm{h}$ & $0.075$ \\
		2 & $c/10$ & $c$ & $2.07\, \mathrm{h}$ & $0.086$ \\
		3 & $c/100$ & $c$ & $2.09\, \mathrm{h}$ & $0.103$ & \\
		4 & $c/100$ & $c/10$ & $2.14\, \mathrm{h}$ & $0.016$
	\end{tabular}
\caption{Runtimes and relative residuals of the experiments. For convenience, we define $c \coloneqq 0.5$.}
\label{tab:runtimes}
\end{table}

\paragraph{Regularisation.} In a first experiment, we compared different regularisation parameters. They were chosen such that individual movements of cells are well preserved and the velocity field is sufficiently homogeneous both in time and space. Figure~\ref{fig:reg} depicts these results. A visual inspection of the dataset shows that cells tend to move towards the embryo's body axis which roughly runs from the bottom left to the top right corner in Fig.~\ref{fig:flow}, right. This behaviour is clearly visible from the obtained velocity fields. In Fig.~\ref{fig:flowsequence}, we show the optical flow field for the sequence depicted in Fig.~\ref{fig:sequence}.

\begin{figure*}
	\centering
	\includegraphics[width=0.45\textwidth]{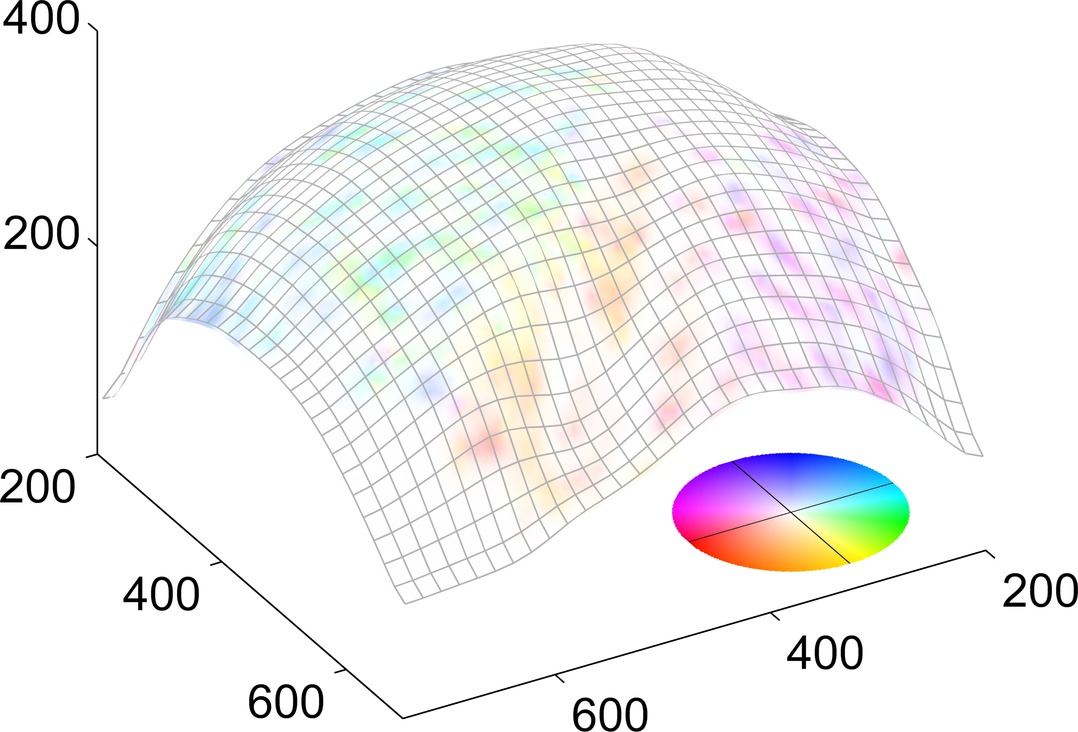}
	\hfill
	\includegraphics[width=0.45\textwidth]{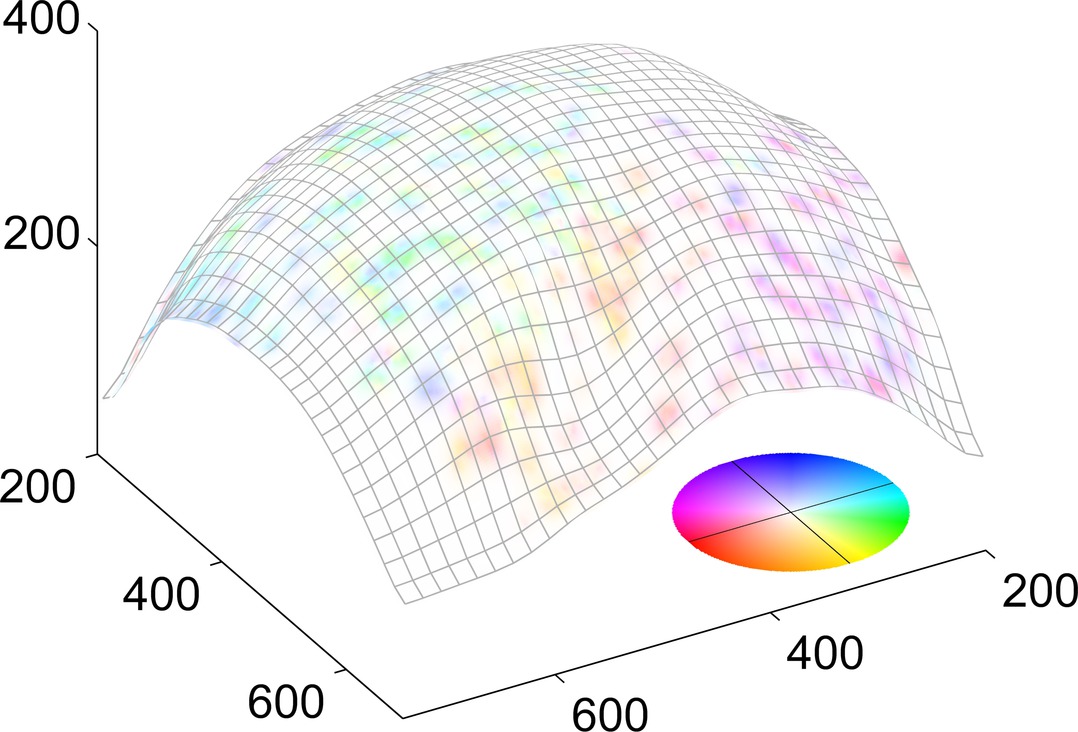}
	\\
	\includegraphics[width=0.45\textwidth]{figures/3-surface-framewise-cxcr4aMO2_290112-frames-120-122}
	\hfill
	\includegraphics[width=0.45\textwidth]{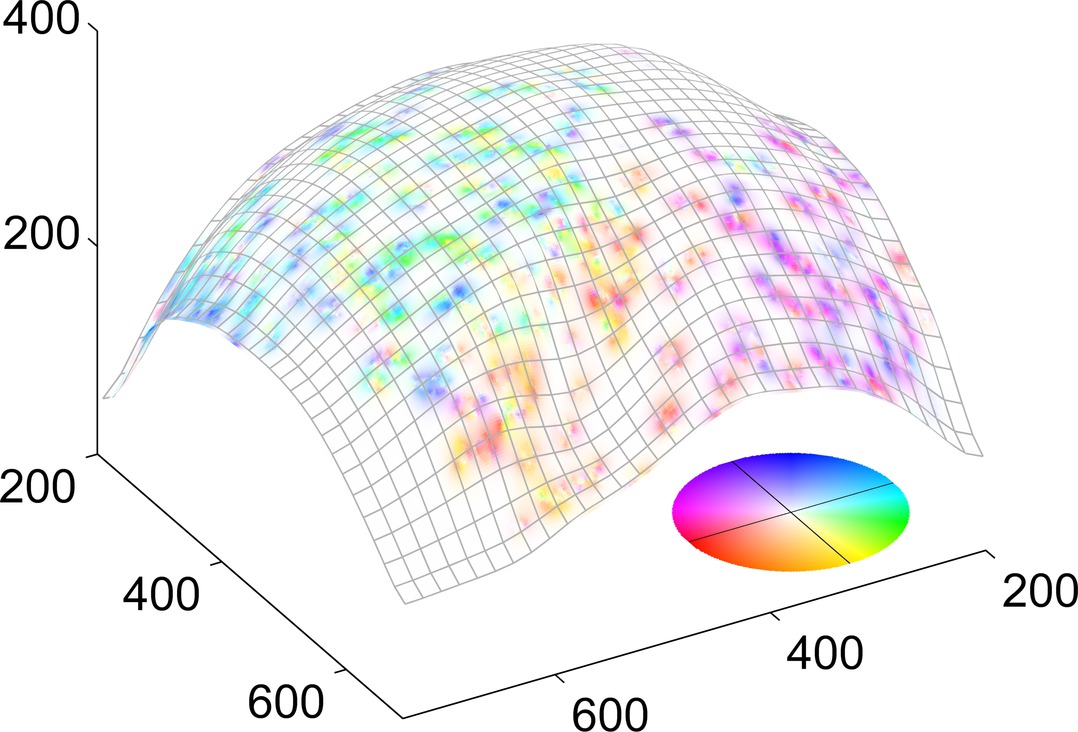}
	\caption{Resulting velocity field $\vec{u}$ between frames 60 and 61 obtained with different regularisation parameters. Denote $c \coloneqq 0.5$. Top left: $\lambda_{0} = \lambda_{1} = \lambda_{2} = c$. Top right: $\lambda_{0} = c/10$ and $\lambda_{1} = \lambda_{2} = c$. Bottom left: $\lambda_{0} = c/100$ and $\lambda_{1} = \lambda_{2} = c$. Bottom right: $\lambda_{0} = c/100$ and $\lambda_{1} = \lambda_{2} = c/10$.}
	\label{fig:reg}
\end{figure*}

\begin{figure*}
	\centering
\includegraphics[width=0.19\textwidth]{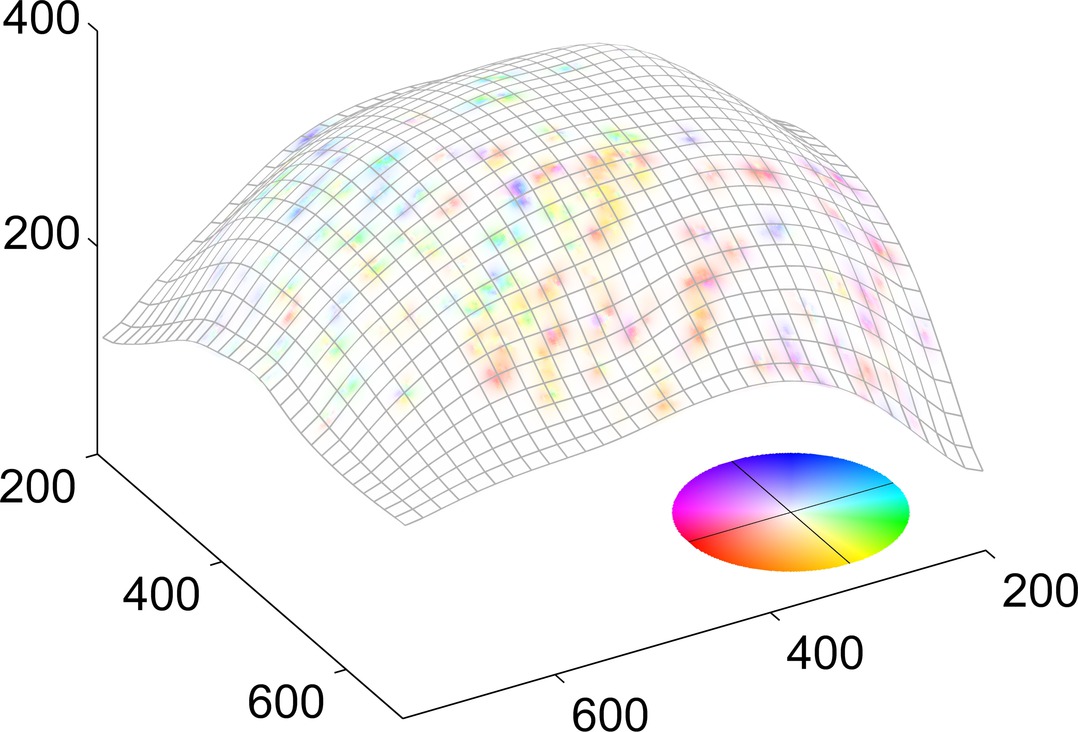}
	\hfill
\includegraphics[width=0.19\textwidth]{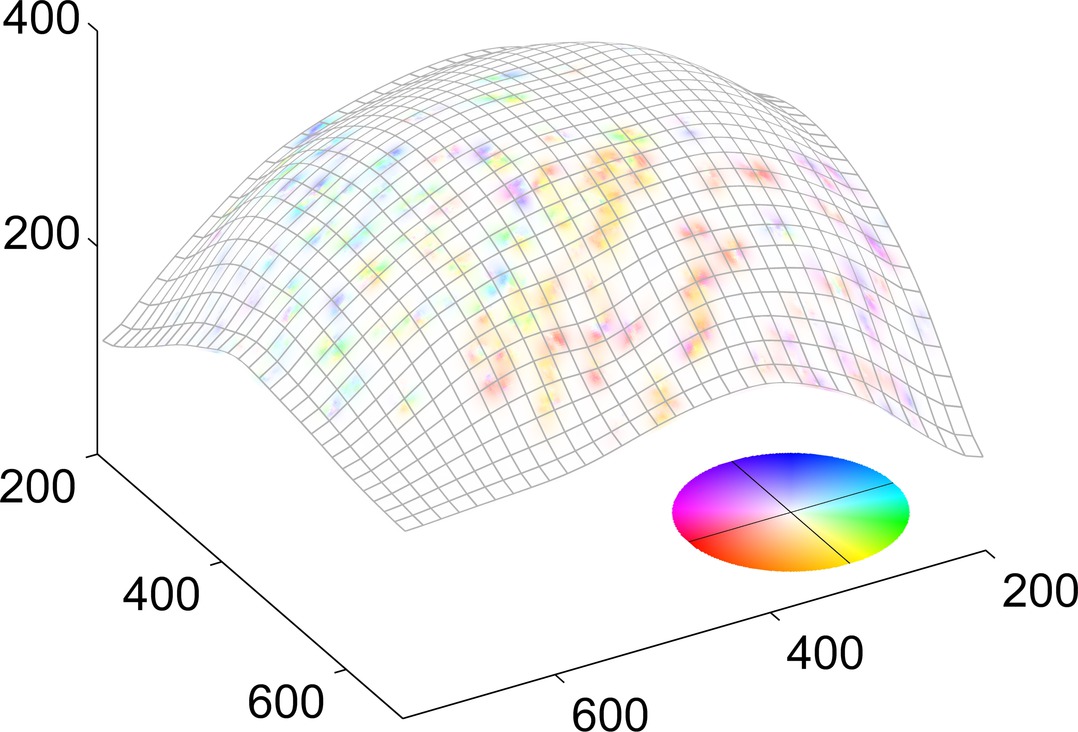}
	\hfill
\includegraphics[width=0.19\textwidth]{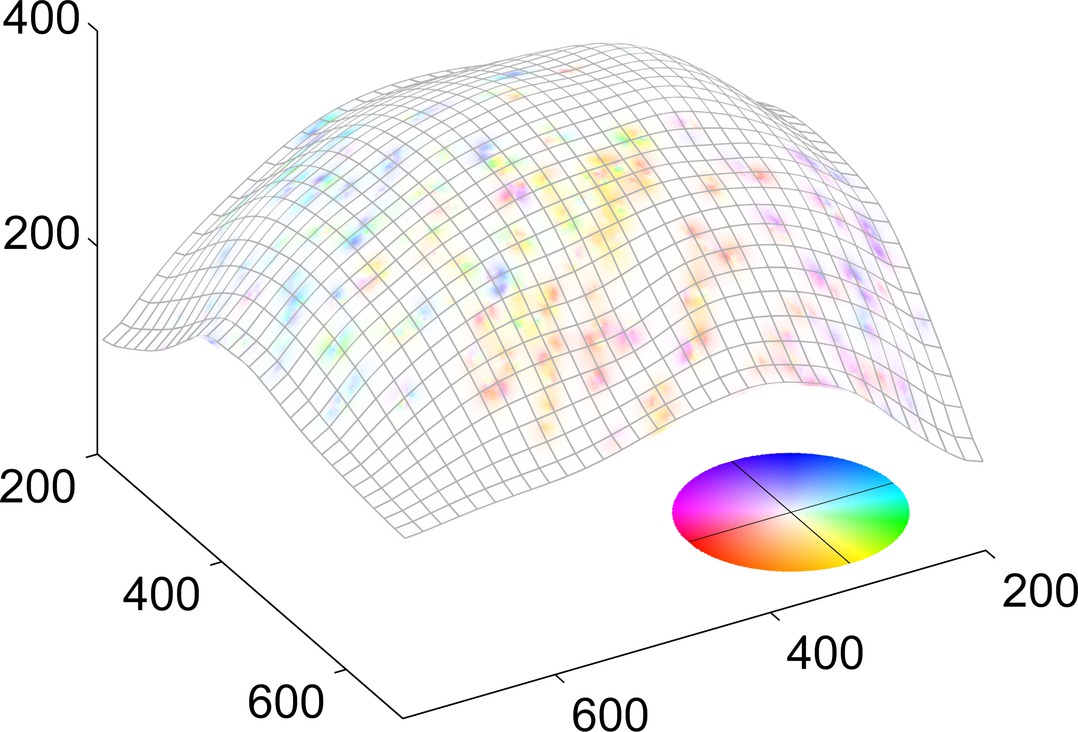}
	\hfill	
\includegraphics[width=0.19\textwidth]{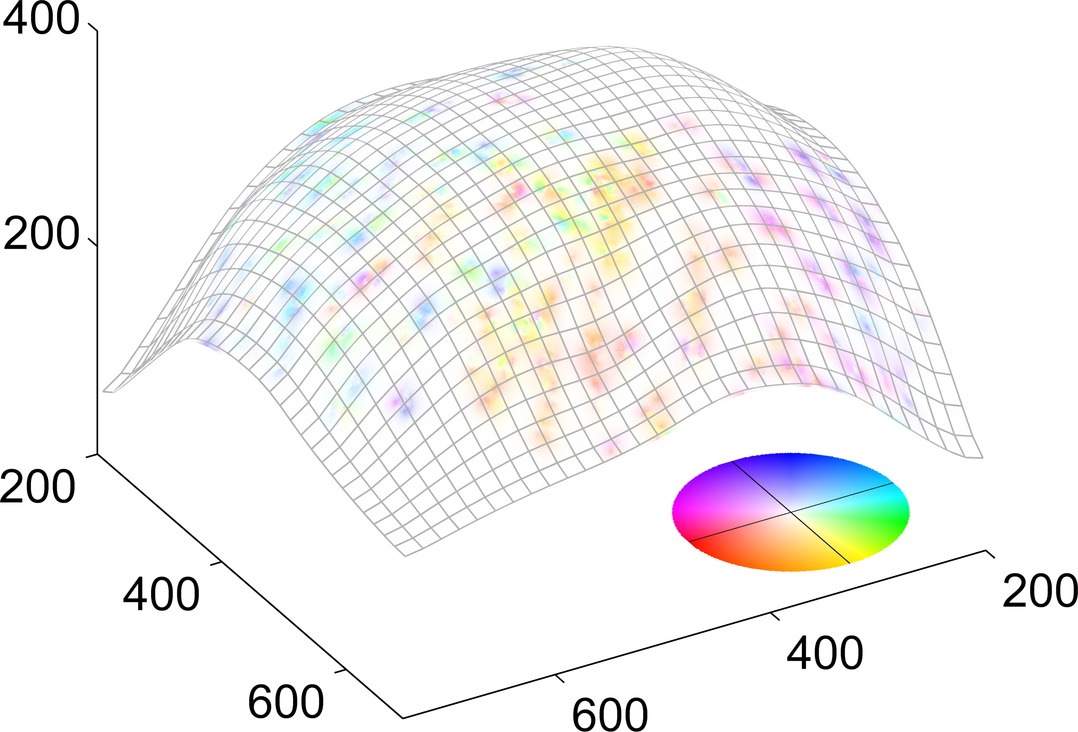}
	\hfill	
\includegraphics[width=0.19\textwidth]{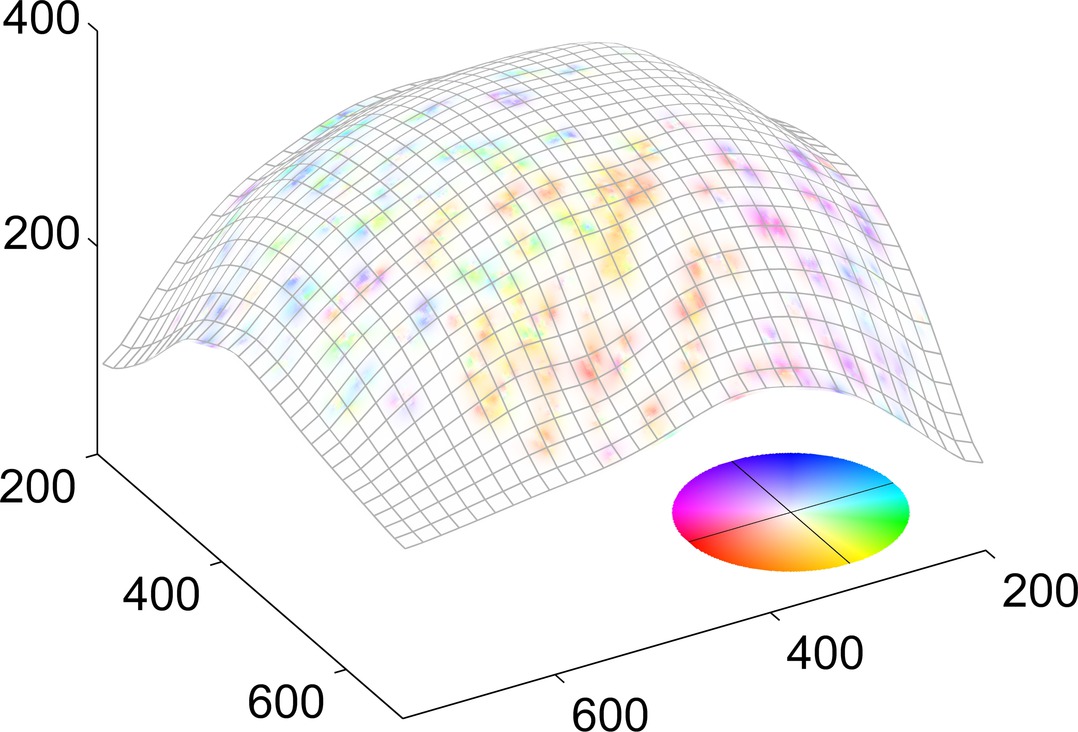} \\
\includegraphics[width=0.19\textwidth]{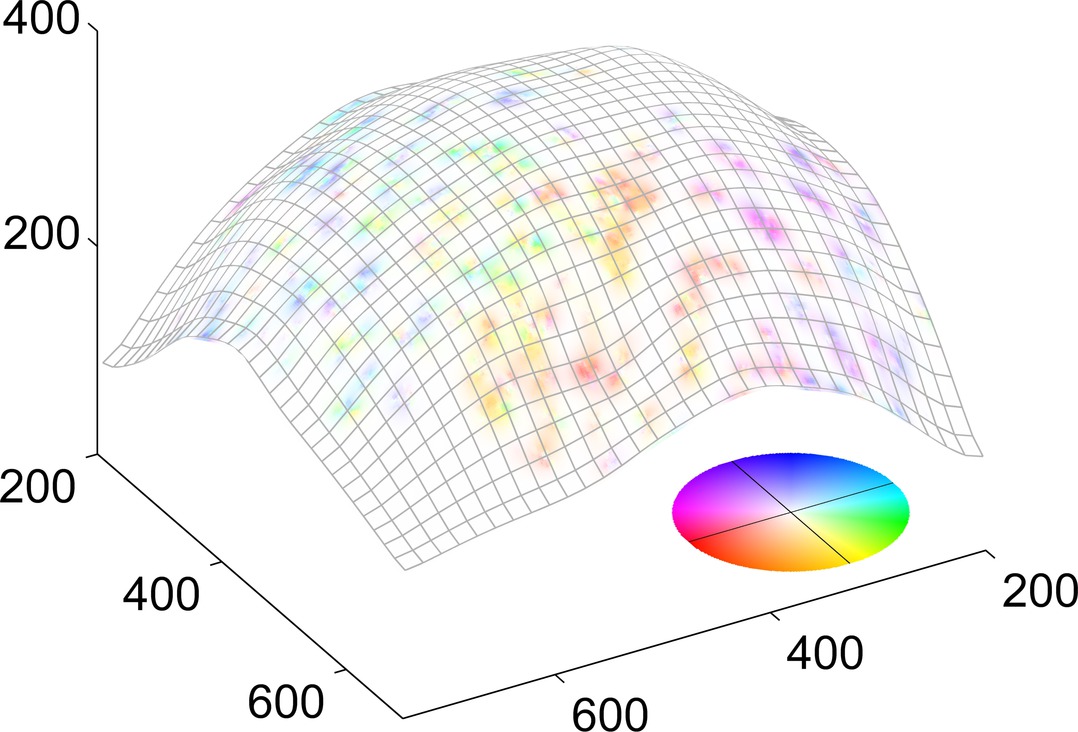}
	\hfill
\includegraphics[width=0.19\textwidth]{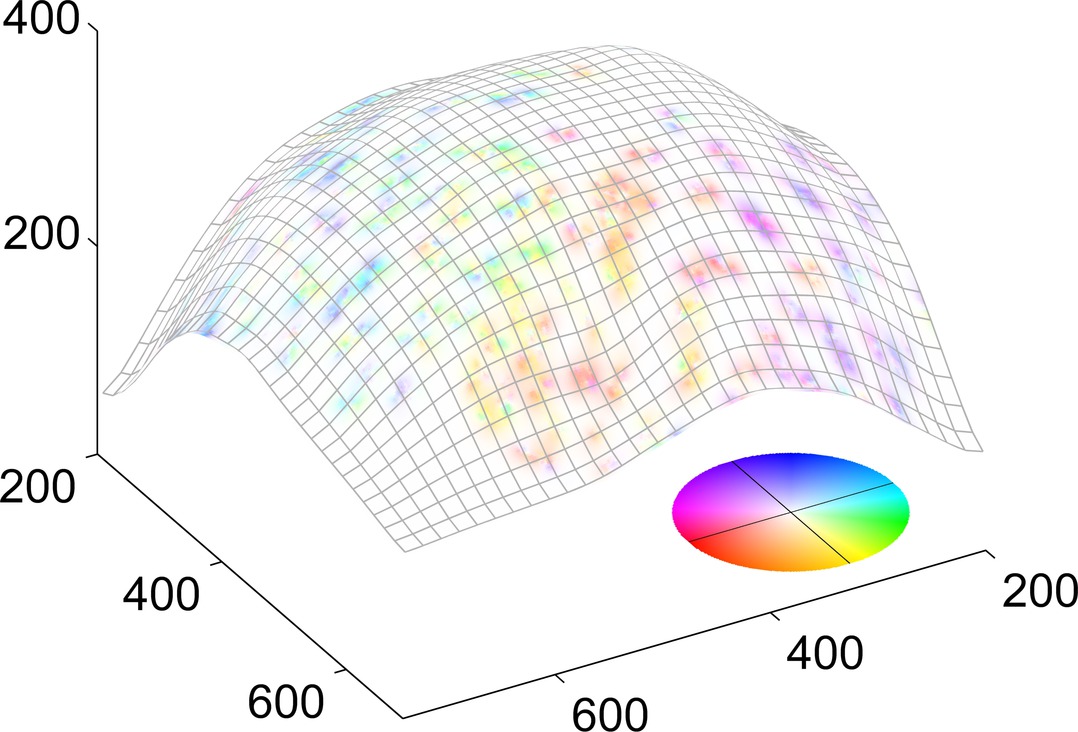}
	\hfill
\includegraphics[width=0.19\textwidth]{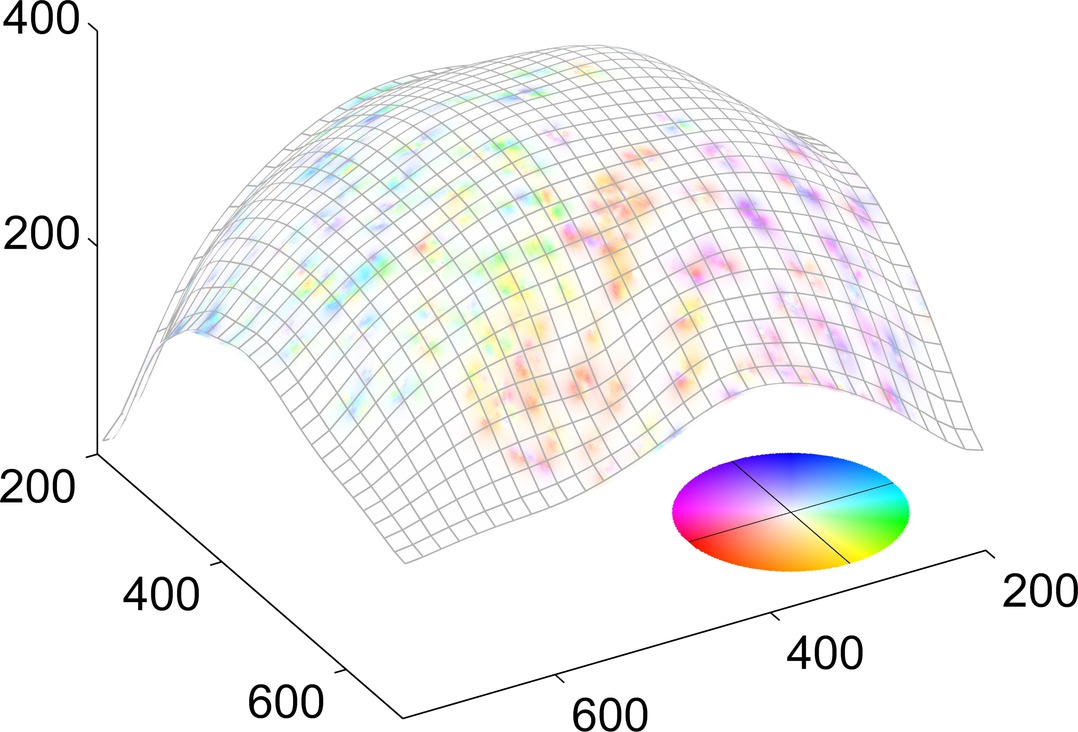}
	\hfill	
\includegraphics[width=0.19\textwidth]{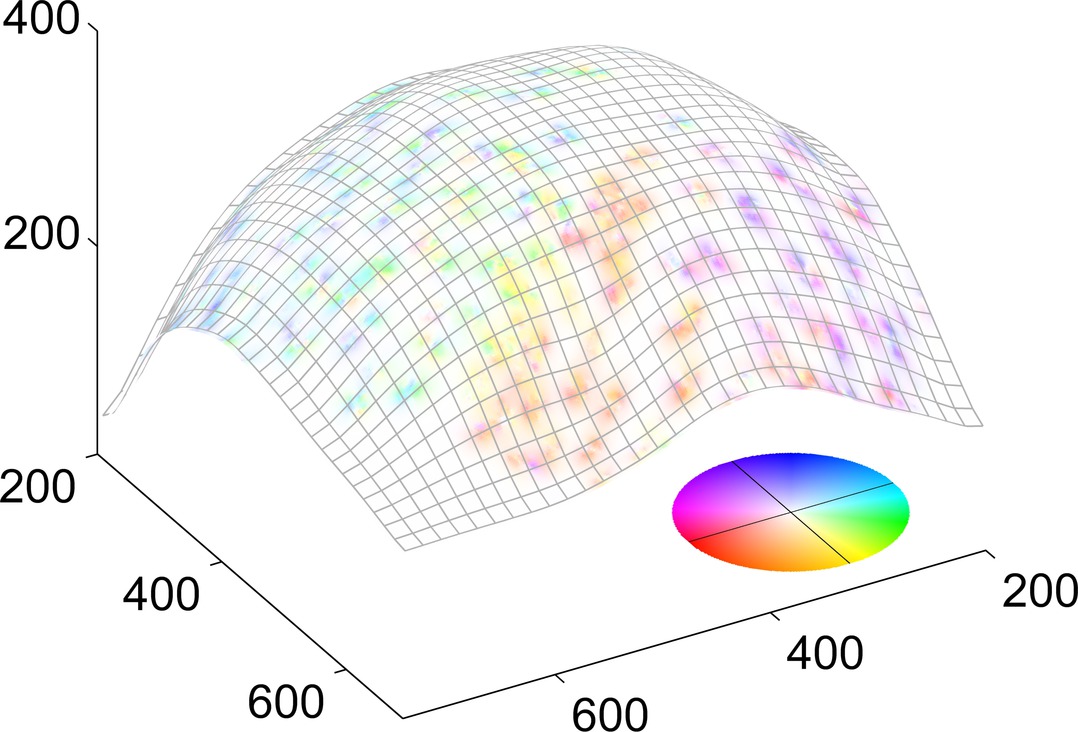}
	\hfill	
\includegraphics[width=0.19\textwidth]{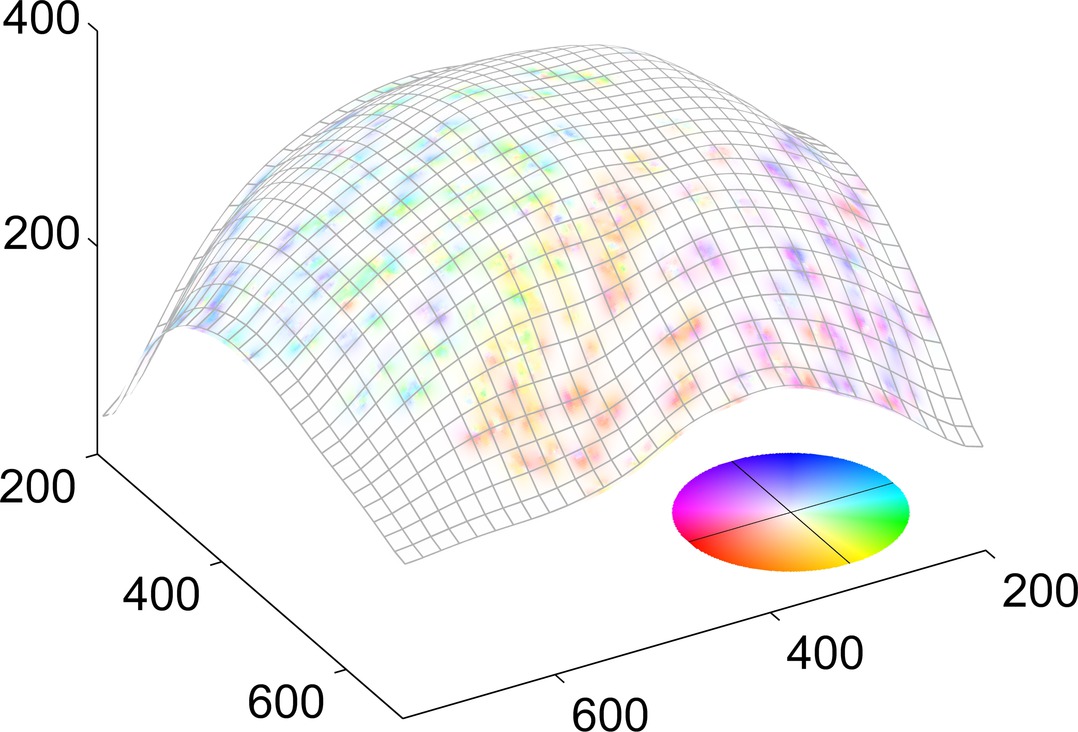} \\
\includegraphics[width=0.19\textwidth]{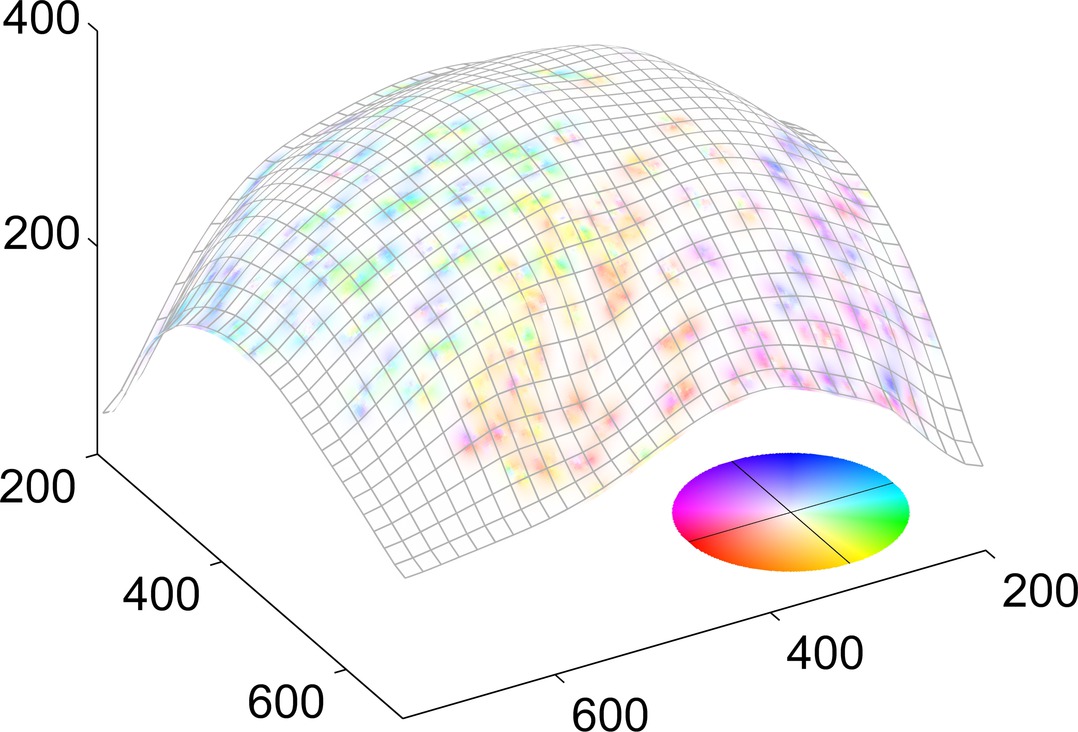}
	\hfill
\includegraphics[width=0.19\textwidth]{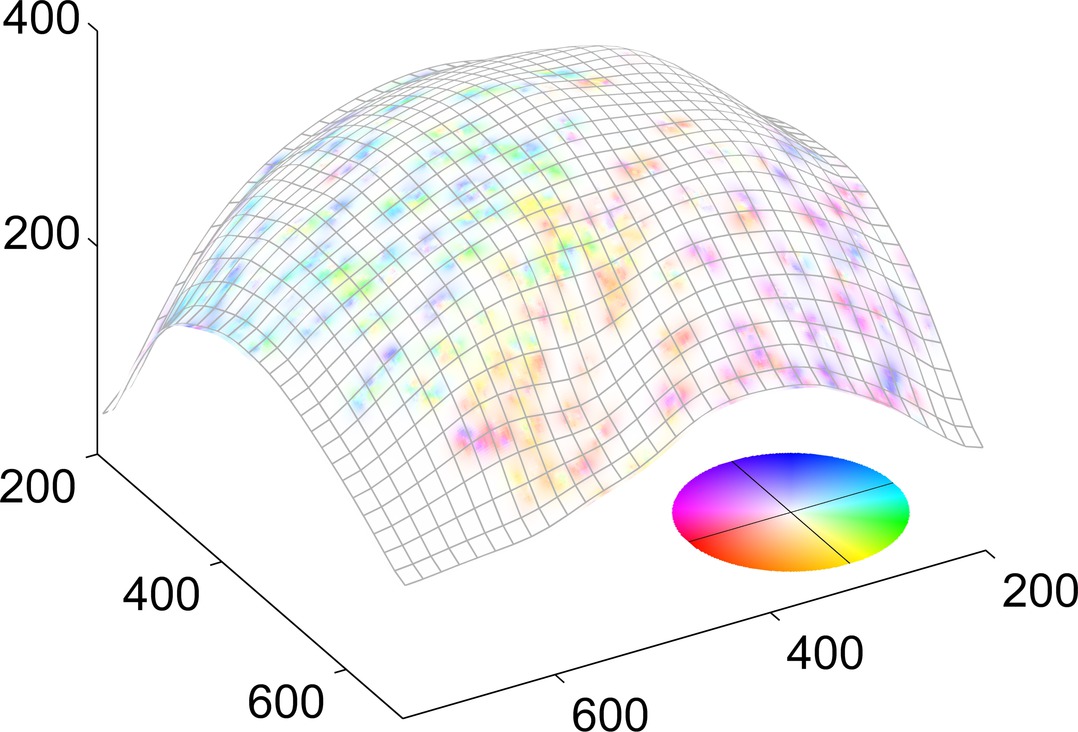}
	\hfill
\includegraphics[width=0.19\textwidth]{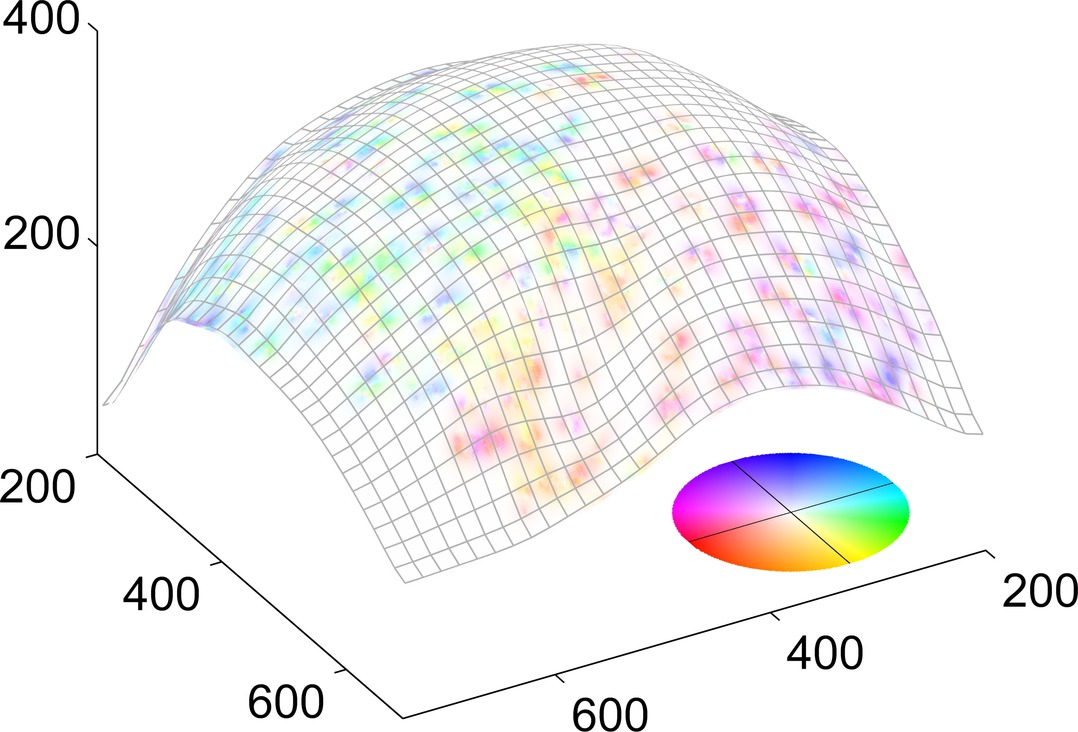}
	\hfill	
\includegraphics[width=0.19\textwidth]{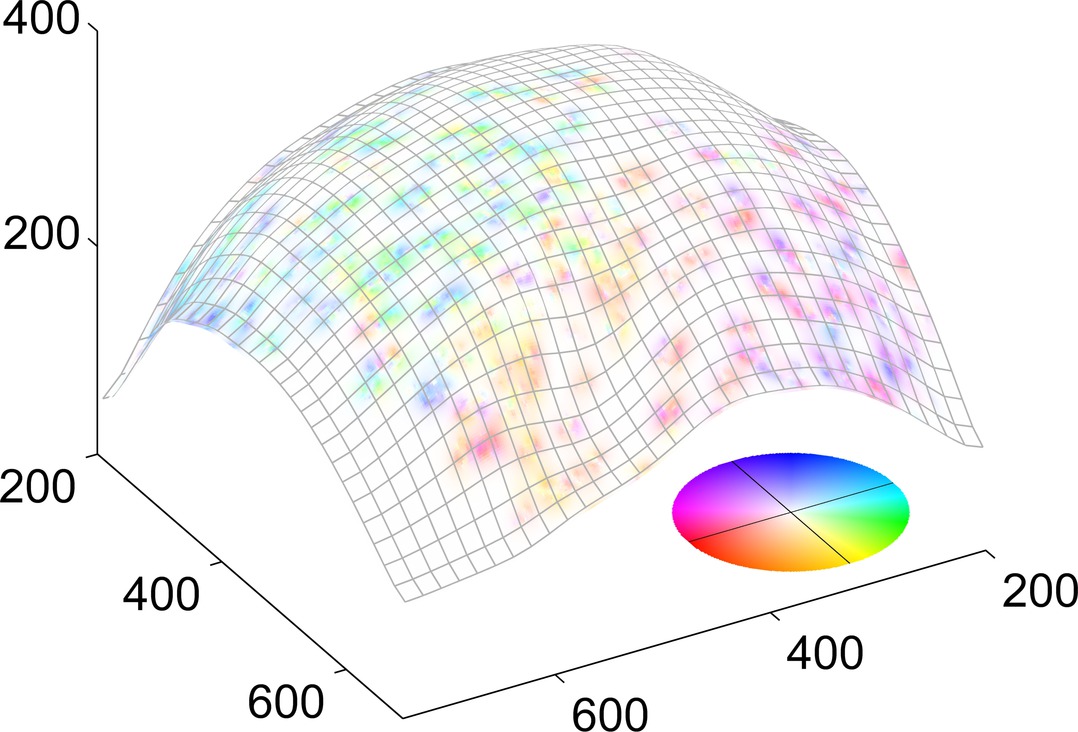}
	\hfill	
\includegraphics[width=0.19\textwidth]{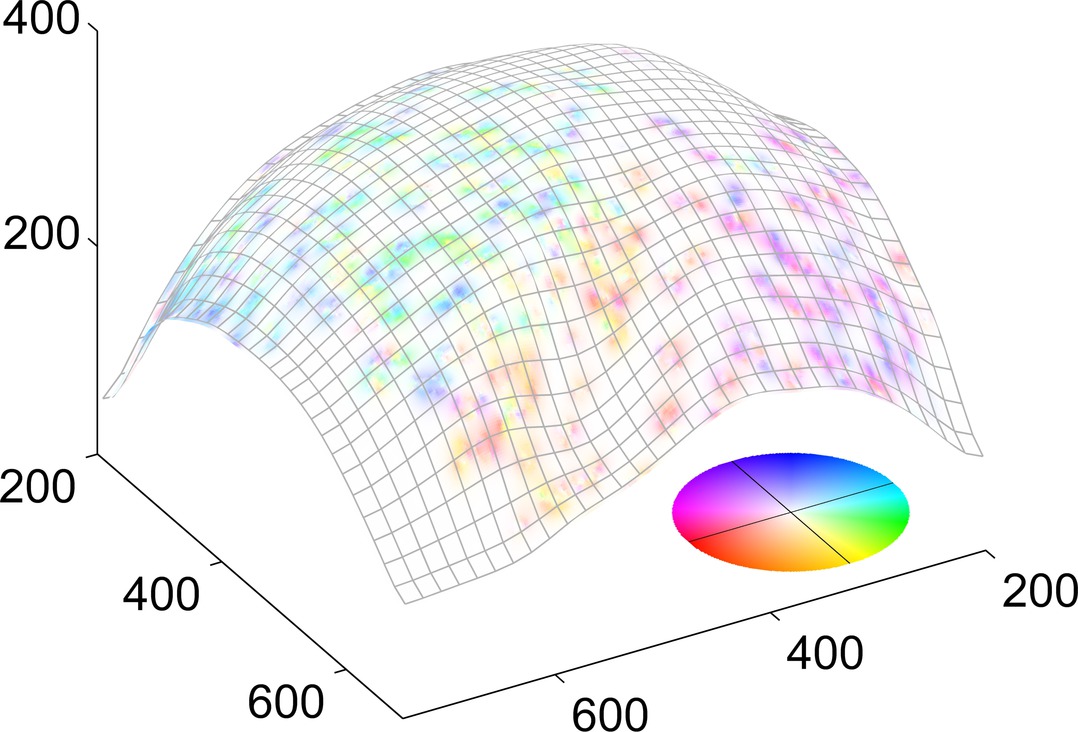}
	\caption{Sequence of colour-coded tangential velocity fields. Depicted are the same frames as in Fig.~\ref{fig:sequence}. Parameters are $\lambda_{0} = c/100$ and $\lambda_{1} = \lambda_{2} = c/10$.}
	\label{fig:flowsequence}
\end{figure*}

\paragraph{Cell Trajectories.} In order to reconstruct the paths travelled by individual cells, we computed the integral curves of $\vec{m}$. By~\eqref{eq:totalmotion}, for every starting point $x_{0} \in \mathcal{M}_{0}$ the trajectory $\gamma(\cdot, x_{0})$ is the solution of the following ordinary differential equation
\begin{equation}
\begin{aligned}
	\partial_{t}\gamma(t, x_{0}) & = \vec{m}(t,\gamma(t, x_{0})), \\
	\gamma(0, x_{0}) & = x_{0},
\end{aligned}
\label{eq:integralcurve}
\end{equation}
where $\vec{m}$ is the total velocity of a cell; cf.~Sec.~\ref{sec:preliminaries}. As discussed in Sec.~\ref{sec:parametrisation}, a local maximum of $F$ at $x_{0} \in \cali{M}_{0}$ indicates the approximate position of a cell. Hence, we chose local maxima as initial values and approximated~\eqref{eq:integralcurve} by solving the projection of
\begin{equation*}
\begin{aligned}
	\hat{\gamma}(t + 1, x_{0}) & = \hat{\gamma}(t, x_{0}) + s \vec{m}(t, \hat{\gamma}(t, x_{0})) \\
	\hat{\gamma}(0, x_{0}) & = x_{0},
\end{aligned}
\end{equation*}
to the $x^{1}$-$x^{2}$-plane, because it allows for a better illustration. The parameter $s$ is a step size and was chosen as $s := 10$. Figure~\ref{fig:integralcurves} shows the projection $\mathrm{P}_{x^3} \hat{\gamma}$ of the computed curves for several values of the regularisation parameters. The effect on the smoothness of the trajectories is clearly visible.

\begin{figure*}[!t]
	\centering
	\includegraphics[width=0.45\textwidth]{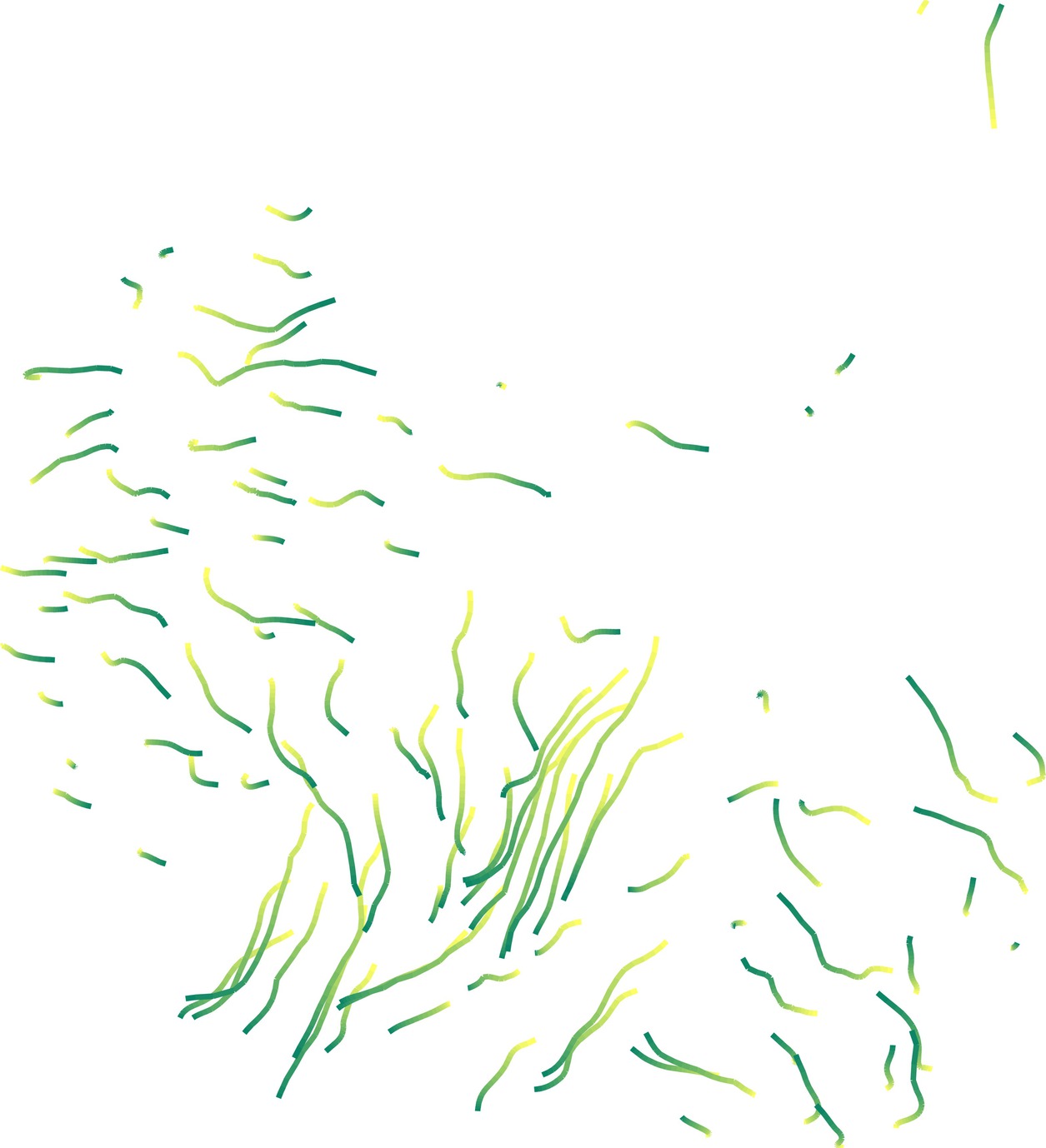}
	\hfill
	\includegraphics[width=0.45\textwidth]{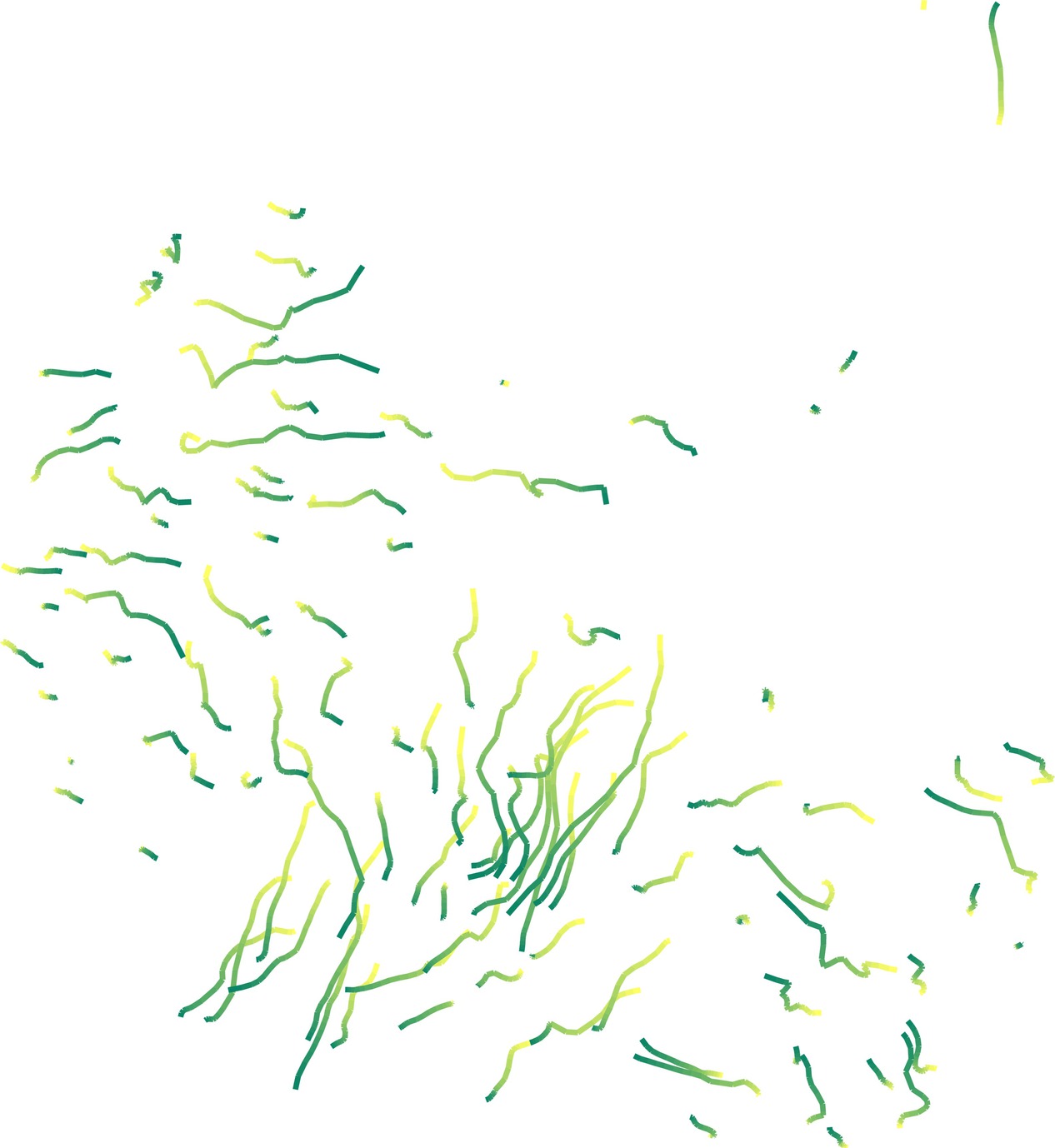}
	\\
	\includegraphics[width=0.45\textwidth]{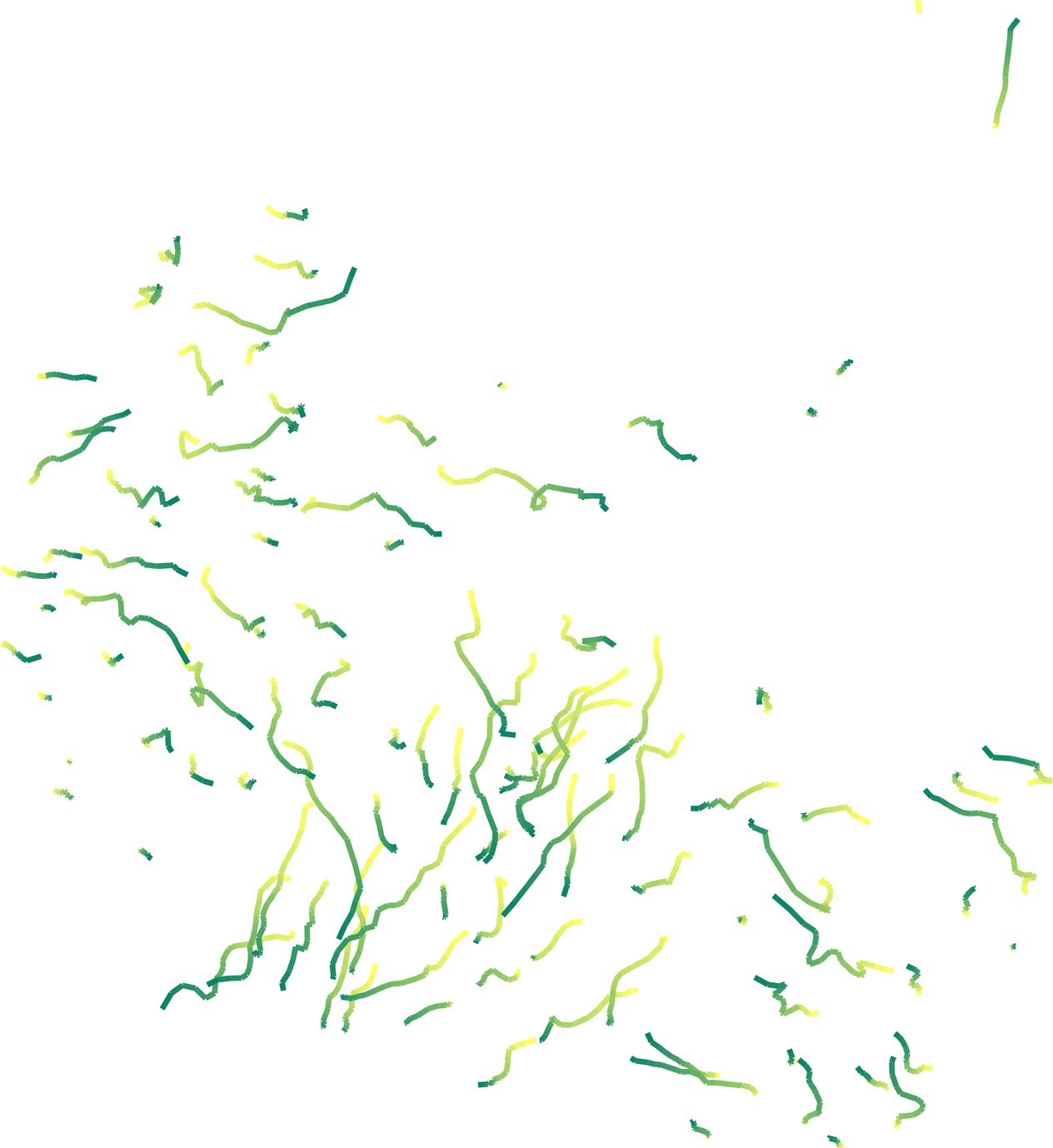}
	\hfill
	\includegraphics[width=0.45\textwidth]{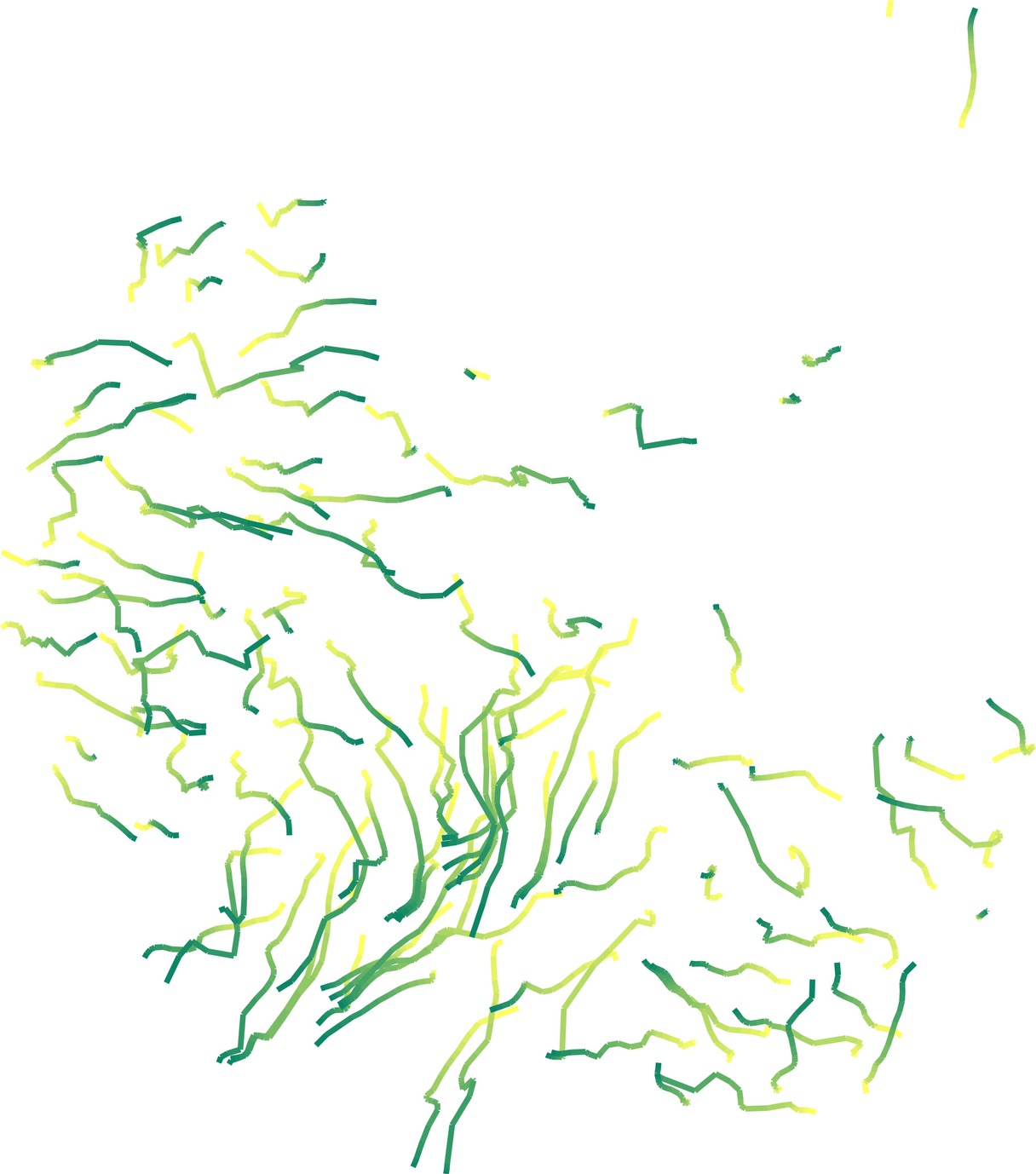}
	\caption{Integral curves for frames $\{40, \dots, 60\}$ for the identical regularisation parameters as in Fig.~\ref{fig:reg}. The colour gradient of a trajectory from yellow to green (bright to dark) indicates temporal progress. Local intensity maxima at the first frame serve as initial values. The embryo's body axis runs from bottom left to top right.}
	\label{fig:integralcurves}
\end{figure*}

\paragraph{Cell Divisions.} Figure~\ref{fig:division} shows two cell divisions in more detail. The displacement field clearly resembles the splitting of the mother cell and the diverging daughter cells. Our results suggest that cell divisions can be indicated reasonably well by the proposed model.

\begin{figure*}[!t]
	\centering
	\includegraphics[width=0.32\textwidth]{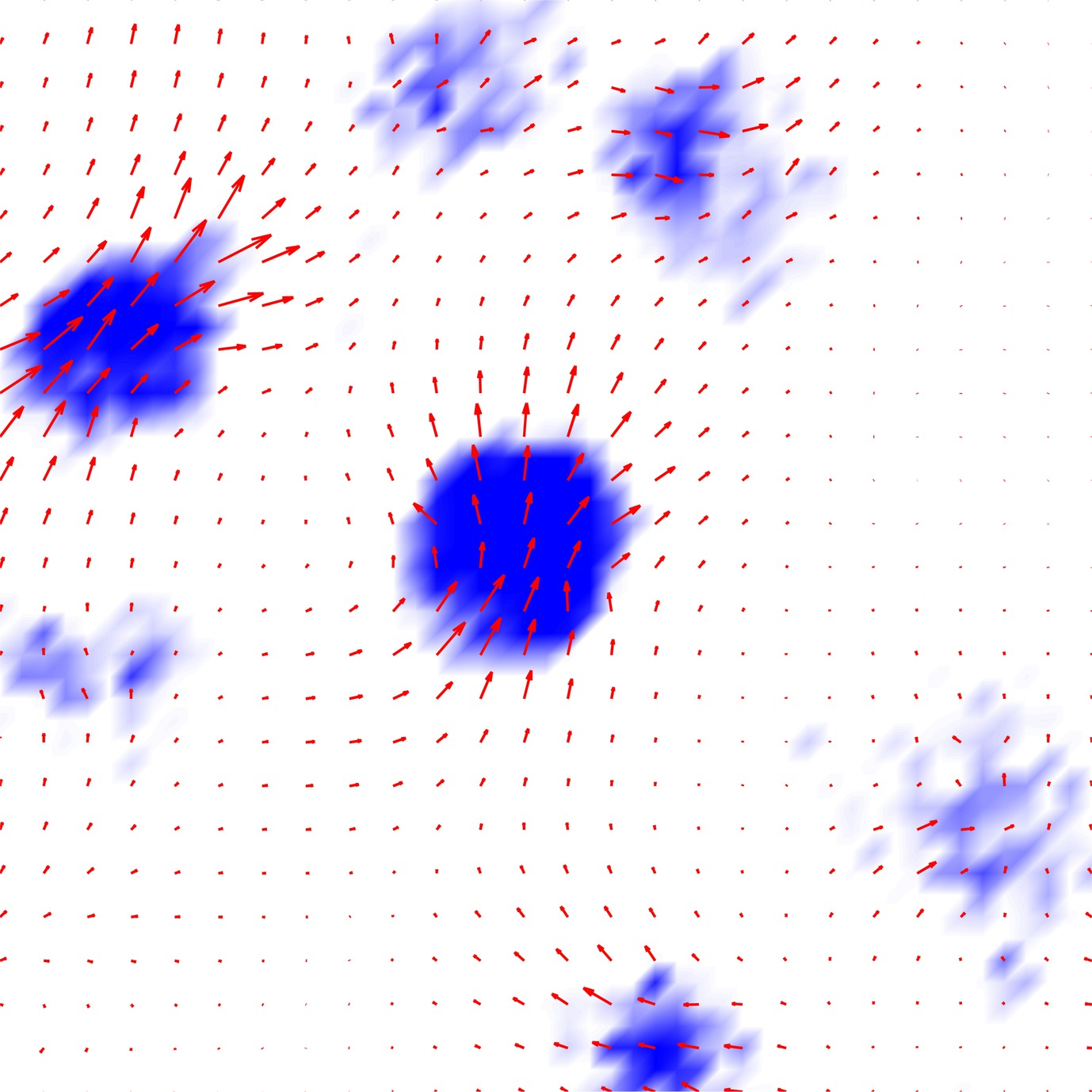}
	\hfill
	\includegraphics[width=0.32\textwidth]{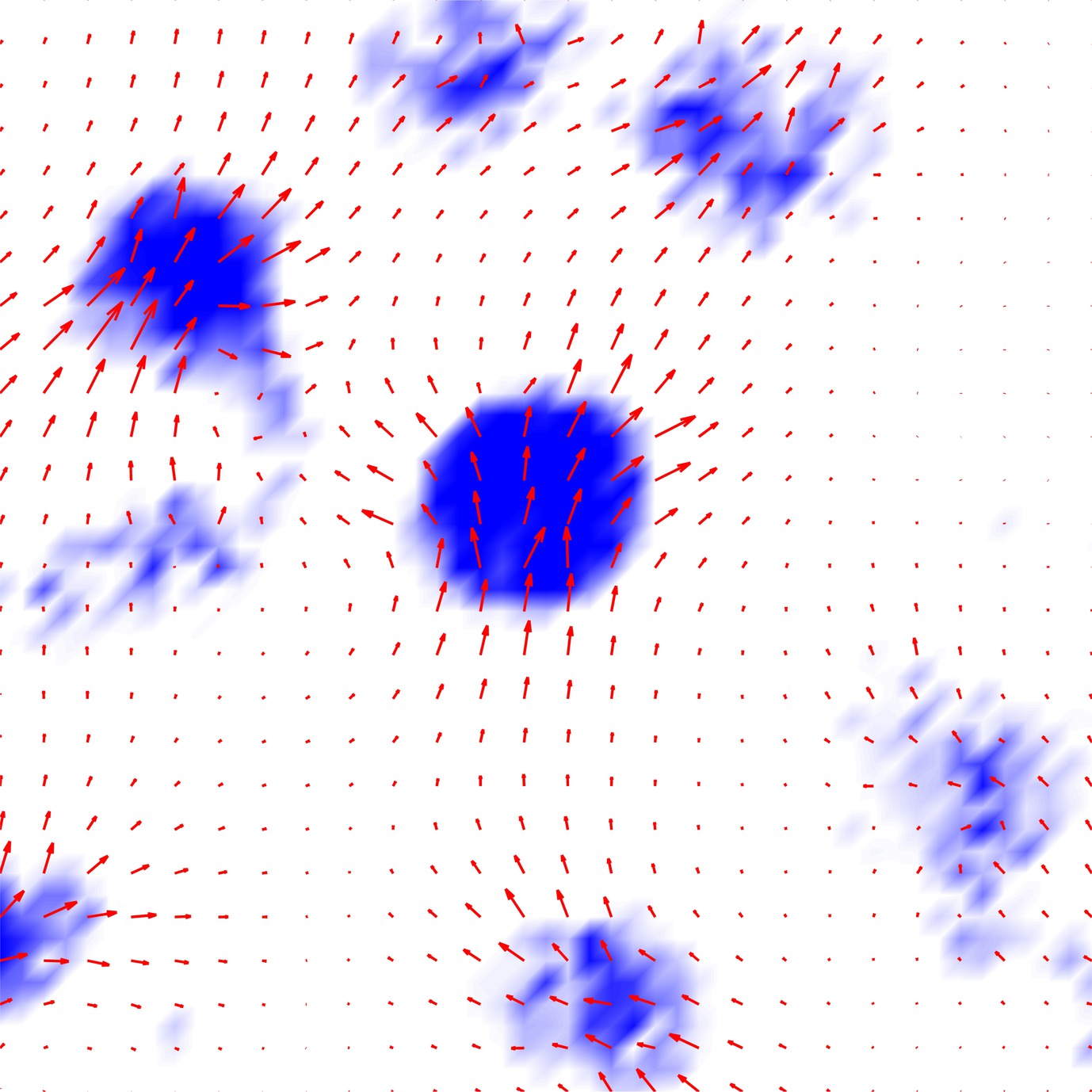}
	\hfill
	\includegraphics[width=0.32\textwidth]{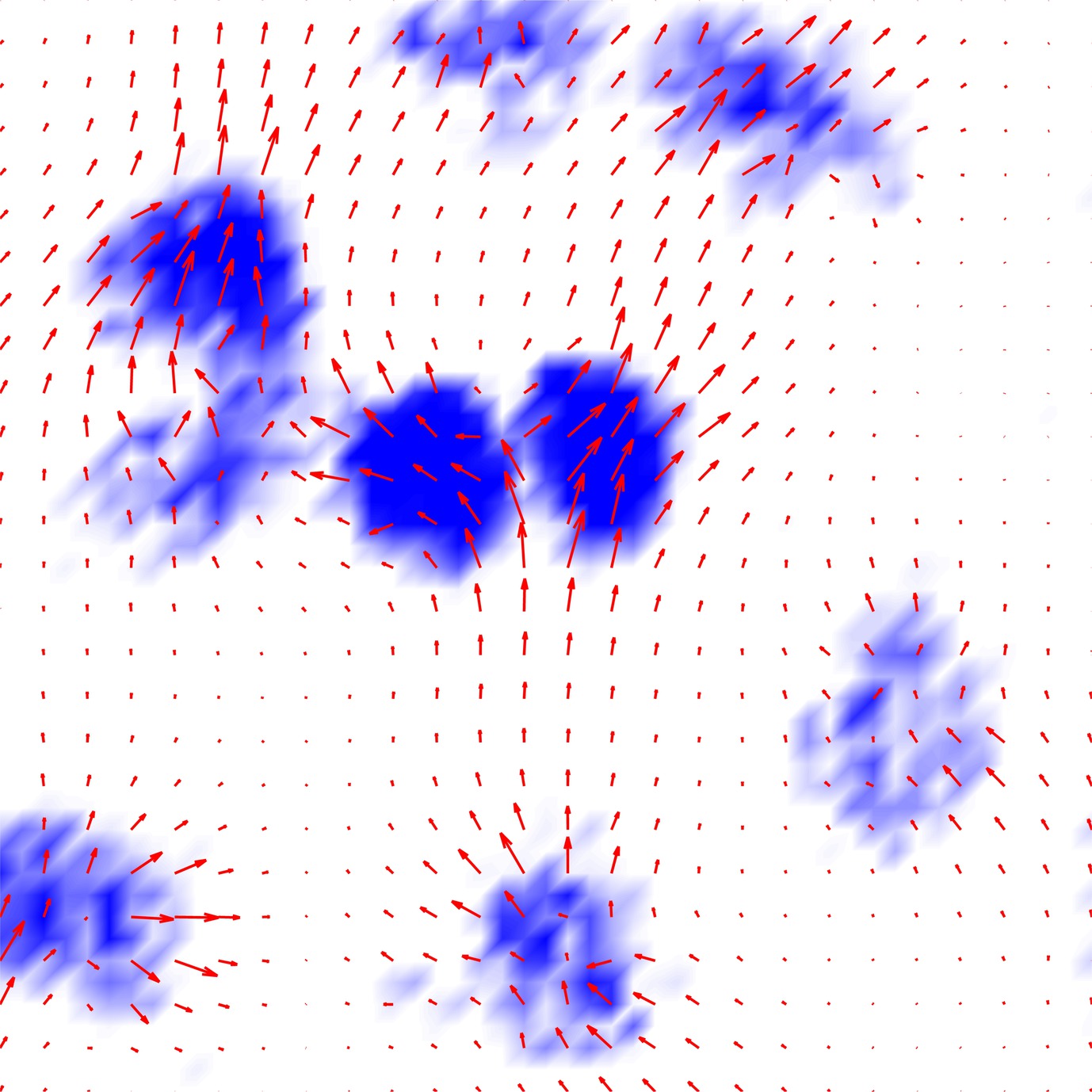}
	\\
	\medskip
	\includegraphics[width=0.32\textwidth]{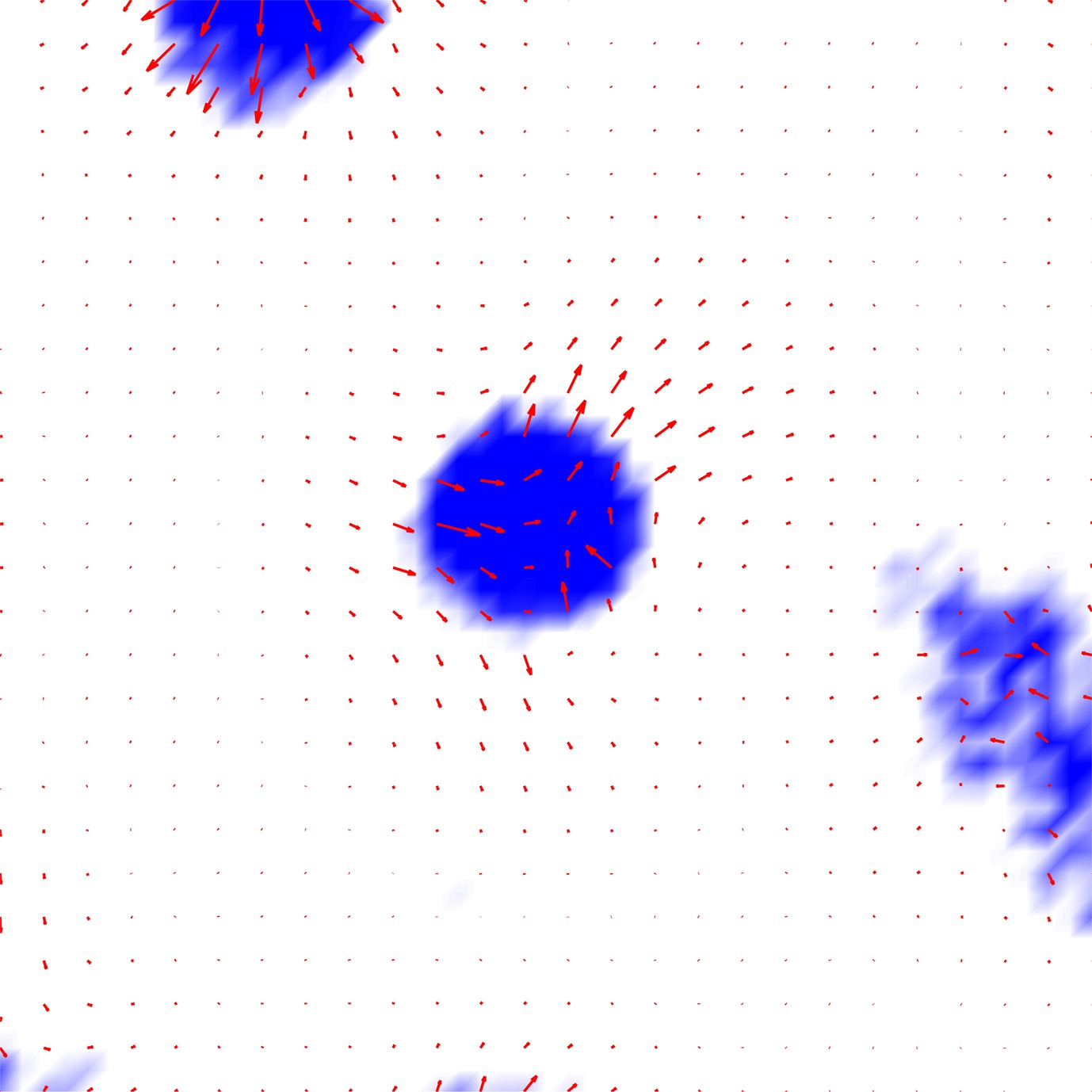}
	\hfill
	\includegraphics[width=0.32\textwidth]{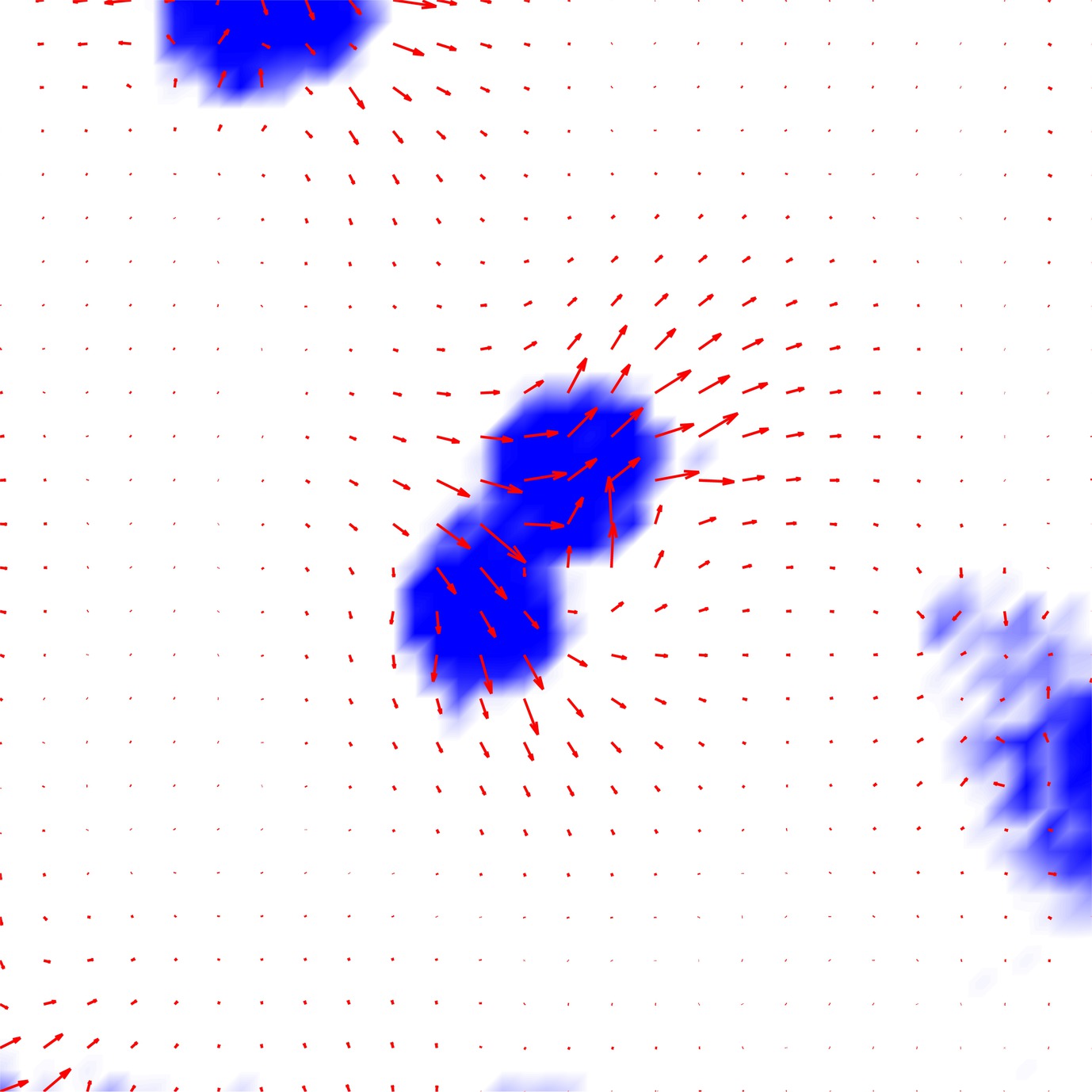}
	\hfill
	\includegraphics[width=0.32\textwidth]{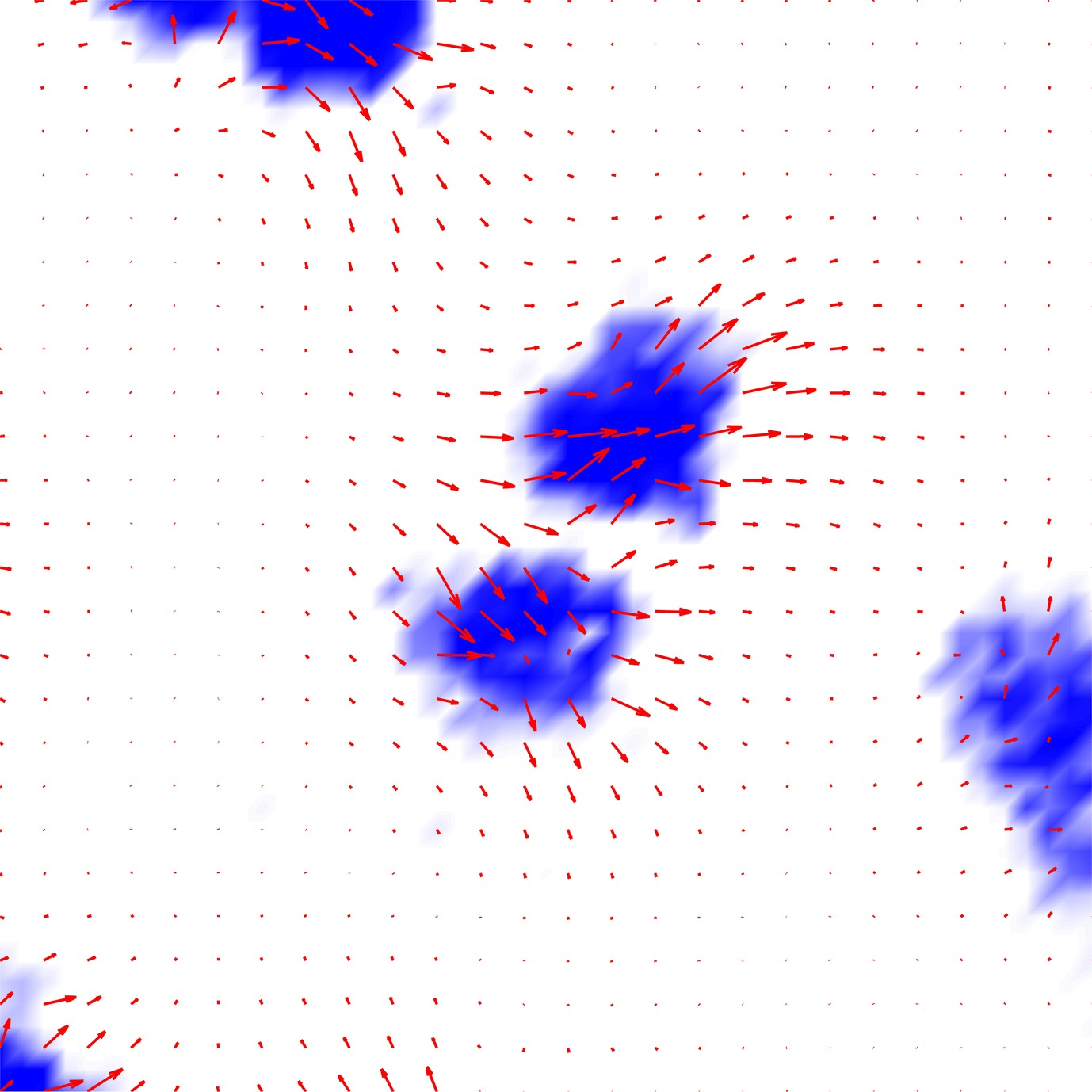}
	\caption{Detailed view of two cell divisions occurring between frames 41 and 44 (top, left to right) and frames 55 and 58 (bottom, left to right). Parameters are $\lambda_{0} = c/100$ and $\lambda_{1} = \lambda_{2} = c/10$. Vectors are scaled and only every second vector is shown. Data intensities are interpolated for smooth illustration.}
	\label{fig:division}
\end{figure*}
\section{Conclusion}
Aiming at an accurate and efficient motion analysis of 4D cellular microscopy data, we generalised both the Horn-Schunck and Weickert-Schn\"orr functionals to images defined on evolving surfaces. The resulting optical flow constraint was solved by means of quadratic regularisation and verified on the basis of real data. Our experimental results suggest that cell movements including divisions are well captured by our model.

\paragraph{Acknowledgements.}
We thank Pia Aanstad from the University of Innsbruck for sharing her biological insight and for kindly providing the microscopy data. This work has been supported by the Vienna Graduate School in Computational Science (IK I059-N) funded by the University of Vienna. In addition, we acknowledge the support by the Austrian Science Fund (FWF) within the national research networks ``Photoacoustic Imaging in Biology and Medicine" (project S10505-N20, Reconstruction Algorithms for PAI) and ``Geometry + Simulation" (project S11704, Variational Methods for Imaging on Manifolds).

\appendix
\section*{Appendix} \label{sec:append}
We first sketch a proof about the statement from Sec.~\ref{sec:evolsurf} that the normal velocity of an evolving surface is independent of $\phi$.
\begin{proposition} \label{thm:normvelo}
	Let $\phi$ be a Langrangian specification of $\mathcal{M}$ and $\vec{V}$ the corresponding velocity as defined in \eqref{eq:velocity}. Then $\vec{V} \cdot \vec{N}$ is independent of the chosen specification.
\end{proposition}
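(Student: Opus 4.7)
My plan is to show that the difference $\vec{V} - \vec{V}'$ arising from two Lagrangian specifications $\phi$ and $\phi'$ of the same evolving surface is purely tangential to $\mathcal{M}_t$, from which the claim follows immediately by taking the inner product with $\vec{N}$.

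The main step is to relate $\phi$ and $\phi'$ via a time-dependent reparametrisation. Since both $\phi(t,\cdot)$ and $\phi'(t,\cdot)$ are diffeomorphisms from $\mathcal{M}_0$ onto the same target $\mathcal{M}_t$, I would define
\begin{equation*}
	\eta(t,y) \coloneqq (\phi'_t)^{-1}\bigl(\phi(t,y)\bigr),
\end{equation*}
so that $\eta(t,\cdot)$ is a diffeomorphism of $\mathcal{M}_0$ onto itself and
\begin{equation*}
	\phi(t,y) = \phi'\bigl(t,\eta(t,y)\bigr).
\end{equation*}
Differentiating this identity in $t$ by the chain rule and then evaluating at $y = \phi_t^{-1}(x)$ (so that $\eta(t,y) = (\phi'_t)^{-1}(x)$) yields
\begin{equation*}
	\vec{V}(t,x) = \vec{V}'(t,x) + D_y\phi'\bigl(t,(\phi'_t)^{-1}(x)\bigr)\bigl[\partial_t\eta(t,y)\bigr].
\end{equation*}

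The key observation, which is the heart of the argument, is that the second summand on the right is tangent to $\mathcal{M}_t$ at $x$. Indeed, for fixed $y$ the curve $t\mapsto \eta(t,y)$ lies entirely in the fixed manifold $\mathcal{M}_0$, so $\partial_t \eta(t,y) \in T_{\eta(t,y)}\mathcal{M}_0$; and since $\phi'(t,\cdot)$ is a diffeomorphism $\mathcal{M}_0 \to \mathcal{M}_t$, its differential $D_y\phi'(t,\cdot)$ carries tangent vectors of $\mathcal{M}_0$ to tangent vectors of $\mathcal{M}_t$. Taking the inner product with $\vec{N}(t,x)$ therefore annihilates this term, giving $\vec{V}\cdot\vec{N} = \vec{V}'\cdot\vec{N}$, as required.

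The only mild obstacle is being careful about what ``tangent to $\mathcal{M}_0$'' and ``tangent to $\mathcal{M}_t$'' mean as subspaces of $\mathbb{R}^3$ in this embedded setting, and making sure the differentials are interpreted as restrictions of $\mathbb{R}^3$ derivatives; once this is set up, the computation is a one-line application of the chain rule and the remark above. No heavier machinery (e.g.\ signed distance functions or level-set formulations) is needed, though those would give an alternative proof by noting that $\vec{V}\cdot\vec{N}$ is intrinsically determined by $\partial_t d_{\mathcal{M}_t}$ with $d_{\mathcal{M}_t}$ the signed distance function.
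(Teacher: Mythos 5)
Your argument is correct, and it takes a genuinely different route from the paper. You compare the two specifications directly: writing $\phi(t,y)=\phi'(t,\eta(t,y))$ with $\eta(t,\cdot)=(\phi'_t)^{-1}\circ\phi(t,\cdot)$ a self-diffeomorphism of $\mathcal{M}_0$, the chain rule exhibits $\vec{V}-\vec{V}'$ as the push-forward under $D\phi'(t,\cdot)$ of the $\mathcal{M}_0$-tangent vector $\partial_t\eta$, hence as a tangent vector to $\mathcal{M}_t$, and pairing with $\vec{N}$ kills it. (The only point worth making explicit is that $\eta$ is jointly smooth in $(t,y)$, which follows from smoothness of $\phi'$ and the inverse function theorem.) The paper instead represents $\bar{\mathcal{M}}$ locally as a level set of a function $G$ with non-vanishing gradient and differentiates $G(t,\phi(t,x_0))=\mathrm{const}$ to obtain the closed-form expression $\vec{V}\cdot\vec{N}=-\partial_t G/\abs{\nabla_{\mathbb{R}^3}G}$, from which independence of $\phi$ is immediate. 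Each approach buys something: the level-set computation yields an explicit, specification-free formula for the normal velocity (useful if one actually has an implicit description of the surface), at the cost of invoking the existence of a local defining function; your argument is more elementary, needing only the chain rule and the fact that diffeomorphisms carry tangent spaces to tangent spaces, and it directly establishes the stronger statement that the paper only records as an afterthought, namely that two Lagrangian specifications of the same evolving surface differ precisely by a tangential velocity field.
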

\begin{proof}
We can represent $\bar{\mathcal{M}}$ locally as the level set of a real-valued function $G(t,x)$, whose gradient does not vanish, see e.g.~\cite[Prop.~5.16]{Lee13}. We now express $\vec{V} \cdot \vec{N}$ solely in terms of $G$ and thereby prove the assertion. Observing that the composition of $G$ with $\phi$ is constant, we calculate
\begin{equation*}
	0	= \frac{\mathrm{d}}{\mathrm{d} t} G(t,\phi(t,x_0)) = \partial_t G + \nabla_{\mathbb{R}^3} G \cdot \vec{V} = \partial_t G + \abs{\nabla_{\mathbb{R}^3} G} \vec{V} \cdot \vec{N}.
\end{equation*}
The second equality holds, because $\nabla_{\mathbb{R}^3} G$ is normal to the surface. We conclude that
\begin{equation*}
	\vec{V} \cdot \vec{N} = - \frac{\partial_t G}{\abs{\nabla_{\mathbb{R}^3} G}}.
\end{equation*}
\end{proof}
In other words, different specifications of a surface can only differ in their respective tangential velocities.

Next we prove the transformation law \eqref{eq:transymb}, \eqref{eq:transymb0} for the connection coefficients $\tilde \Gamma^j_{\mu j}$.
\begin{lemma} \label{thm:transymb}
	The symbols defined by \eqref{eq:symbols} are given by \eqref{eq:transymb}.
\end{lemma}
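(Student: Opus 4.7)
The plan is to compute $\nabla_{\vec e_i}\vec e_k$ in the coordinate basis $\{\partial_1\vec x,\partial_2\vec x\}$ by expanding everything via $\vec e_i=\alpha_i^\ell\partial_\ell\vec x$, and then read off the $\vec e_j$-coefficients using orthonormality of the frame.

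First I would substitute the expansion $\vec e_k=\alpha_k^m\partial_m\vec x$ into $\nabla_{\vec e_i}\vec e_k$ and use linearity of the covariant derivative in its lower slot together with the Leibniz rule in its upper slot (the factor $\alpha_k^m$ is a scalar function, so $\nabla_{\vec v}(\alpha_k^m\partial_m\vec x)=(\nabla_{\vec v}\alpha_k^m)\partial_m\vec x+\alpha_k^m\nabla_{\vec v}\partial_m\vec x$, with $\nabla_{\vec v}\alpha_k^m$ just the directional derivative). Combined with $\vec v=\vec e_i=\alpha_i^\ell\partial_\ell\vec x$ this gives
\begin{equation*}
\nabla_{\vec e_i}\vec e_k=\alpha_i^\ell(\partial_\ell\alpha_k^m)\,\partial_m\vec x+\alpha_i^\ell\alpha_k^m\,\nabla_{\partial_\ell\vec x}\partial_m\vec x.
\end{equation*}
Then I would use the defining relation \eqref{eq:christoffel} to replace $\nabla_{\partial_\ell\vec x}\partial_m\vec x$ by $\Gamma^n_{\ell m}\partial_n\vec x$ and relabel dummy indices so that everything is displayed as a coefficient of $\partial_m\vec x$:
\begin{equation*}
\nabla_{\vec e_i}\vec e_k=\bigl(\alpha_i^\ell\partial_\ell\alpha_k^m+\alpha_i^\ell\alpha_k^n\Gamma^m_{\ell n}\bigr)\,\partial_m\vec x.
\end{equation*}

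Next I would convert from the $\partial_m\vec x$ basis back to the $\vec e_j$ basis. Because $\{\vec e_1,\vec e_2\}$ is orthonormal, the identity $\nabla_{\vec e_i}\vec e_k\cdot\vec e_p=\tilde\Gamma_{ik}^p$ (Kronecker-contracted with $\delta^{jp}$ to raise the index) recovers the symbols directly as inner products with the frame, avoiding any need to invert the matrix $(\alpha_i^j)$. Using $\vec e_p=\alpha_p^h\partial_h\vec x$ and the definition \eqref{eq:metric} of the first fundamental form gives $\partial_m\vec x\cdot\vec e_p=\alpha_p^h\,g_{hm}$, so
\begin{equation*}
\tilde\Gamma_{ik}^j=\delta^{jp}\bigl(\nabla_{\vec e_i}\vec e_k\bigr)\cdot\vec e_p=\delta^{jp}\alpha_p^h g_{hm}\bigl(\alpha_i^\ell\partial_\ell\alpha_k^m+\alpha_i^\ell\alpha_k^n\Gamma^m_{\ell n}\bigr),
\end{equation*}
which is \eqref{eq:transymb}. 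The derivation of \eqref{eq:transymb0} is verbatim the same once $\nabla_{\vec e_i}$ is replaced by $\nabla_t$ and the coordinate identity \eqref{eq:Christoffel02} is used in place of \eqref{eq:christoffel}, since $\partial_t$ acts on scalars just like a directional derivative and $\alpha_0^\mu=\delta_0^\mu$ collapses the first term to $\partial_t\alpha_i^m$.

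There is no real obstacle here: the only thing to keep track of is disciplined index bookkeeping (Latin vs.\ Greek ranges, which slot the derivative hits, and the single application of orthonormality to raise the frame index via $\delta^{jp}$). The essential ingredients are the Leibniz rule for $\nabla$ on a scalar-times-vector, the definition of $\Gamma^j_{ik}$, and the fact that in an orthonormal frame the dual basis coincides with the frame itself.
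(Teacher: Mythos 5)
Your argument is correct and is essentially the paper's own proof: both expand $\nabla_{\vec e_i}\vec e_k$ in the coordinate basis via \eqref{eq:covariant} (your Leibniz-rule computation reproduces exactly that formula) and then extract $\tilde\Gamma^j_{ik}$ by taking inner products with the orthonormal frame, using $\partial_m\vec x\cdot\vec e_p=\alpha_p^h g_{hm}$. The treatment of \eqref{eq:transymb0} by the analogous calculation with $\nabla_t$ and \eqref{eq:Christoffel02} also matches the paper.
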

\begin{proof}
	Take inner products on both sides of \eqref{eq:transymb} with $\vec{e}_j$ to get
	\begin{equation*}
		\vec{e}_j \cdot \nabla_{\vec{e}_i} \vec{e}_k = \tilde \Gamma^j_{ik}.
	\end{equation*}
	Next express both terms on the left hand side in the coordinate basis by using $\vec{e}_j = \alpha_j^m \partial_m \vec{x}$ and formula \eqref{eq:covariant}. The assertion follows now immediately.
\end{proof}
An analogous calculation yields formula \eqref{eq:transymb0}.

For our implementation the Euler-Lagrange equations \eqref{eq:optsys} are needed in the following form
\begin{equation} \label{eq:optsys2}
	\begin{gathered}
		d^{\nu \sigma} \partial_{\nu\sigma}w^m + c^{\sigma m}_{i} \partial_\sigma w^i + b^m_{i} w^i = a^m , \qquad \text{in } D, \\
		q^{\nu \sigma} \partial_\sigma w^m + p_{i}^{\nu m} w^i = 0, \qquad \text{on } \{\xi^\nu = 0\} \cup \{\xi^\nu = 1\},
	\end{gathered}
\end{equation}
where we assumed $D=(0,1)^3$. As usual the system is to be understood for $m=1,2$ and $\nu=0,1,2$. Below we give the exact coefficients.
\begin{equation} \label{eq:coefficients}
\begin{aligned}
	a^m					&=	-\alpha_m^i \partial_i f \partial_t f\\
	b^m_{i}				&=	\alpha_m^j \alpha_i^k \partial_j f \partial_k f + \textstyle\sum_{\mu} \lambda_\mu \left( \textstyle\sum_{j}\tilde{\Gamma}^j_{\mu m} \tilde{\Gamma}^j_{\mu i}
							- G_\nu \alpha^\nu_\mu \tilde{\Gamma}^m_{\mu i} + \partial_\nu \left( \alpha^\nu_\mu \tilde{\Gamma}^m_{\mu i} \right) \right)\\
	c^{\sigma m}_{i}	&=	\textstyle\sum_{\mu} \lambda_\mu \Big( \alpha^\sigma_\mu \tilde{\Gamma}^i_{\mu m} - \alpha^\sigma_\mu \tilde{\Gamma}^m_{\mu i} -\delta_{im} \left( G_\nu \alpha^\nu_\mu \alpha^\sigma_\mu + \partial_\nu ( \alpha^\nu_\mu \alpha^\sigma_\mu ) \right) \Big)\\
	d^{\nu \sigma}		&=	-\textstyle\sum_{\mu} \lambda_\mu \alpha^\nu_\mu \alpha^\sigma_\mu\\
	p_{i}^{\nu m}		&=	\textstyle\sum_{\mu} \lambda_\mu \alpha^\nu_\mu \tilde{\Gamma}^m_{\mu i} \\
	q^{\nu\sigma}		&=	\textstyle\sum_{\mu} \lambda_\mu \alpha^\nu_\mu \alpha^\sigma_\mu
\end{aligned}
\end{equation}
Here we used the shorthand
\begin{equation*}
	G_\nu = \frac{\partial_\nu \sqrt{ \det g}}{2 \sqrt{ \det g}}.
\end{equation*}
Recall that the functional without time regularisation \eqref{eq:functional2} leads to a sequence of decoupled systems for every instant $t$. Each of those has the form
\begin{equation*}
	\begin{gathered}
		d^{jk} \partial_{jk}w^m + c^{k m}_{i} \partial_k w^i + b^m_{i} w^i = a^m , \qquad \text{in } D,\\
		q^{jk} \partial_k w^m + p_{i}^{ mj} w^i = 0, \qquad \text{on } \{\xi^j = 0\} \cup \{\xi^j = 1\}.
	\end{gathered}
\end{equation*}
Note that, in comparison to system \eqref{eq:optsys2}, we only replaced Greek indices by Latin ones. The coefficients $a,b,c,d,p,q$ of this simpler system can be obtained from the list above by setting $\lambda_0 = 0$.

\bibliographystyle{plain}

\def\cprime{$'$}
  \providecommand{\noopsort}[1]{}\def\ocirc#1{\ifmmode\setbox0=\hbox{$#1$}\dimen0=\ht0
  \advance\dimen0 by1pt\rlap{\hbox to\wd0{\hss\raise\dimen0
  \hbox{\hskip.2em$\scriptscriptstyle\circ$}\hss}}#1\else {\accent"17 #1}\fi}
  \def\cprime{$'$}

\end{document}